\newcommand{\mydate}{\formatdate{19}{6}{2020}}
\date{\mydate}
\newtheorem{theorem}{Theorem}[section]
\newtheorem{lemma}[theorem]{Lemma}
\newtheorem{proposition}[theorem]{Proposition}
\newtheorem{definition}[theorem]{Definition}
\newtheorem{assumption}[theorem]{Assumption}
\newif\ifsubmit
\newcommand{\peijun}[1]{}
\newcommand{\ruoyu}[1]{}
\newcommand{\tianye}[1]{}
\newcommand{\peijun}[1]{{\color{blue}{[Peijun: #1]}}}
\newcommand{\ruoyu}[1]{{\color{red}{[Ruoyu: #1]}}}
\newcommand{\tianye}[1]{{\color{orange}{[Tian: #1]}}}
\title{DEED: A General Quantization Scheme for Communication Efficiency in Bits}
\author{
Tian Ye
\thanks{Institute for Interdisciplinary Information Sciences, Tsinghua University, Beijing, Beijing
  (\texttt{yet17@mails.tsinghua.edu.cn}).
  The work is performed when Tian Ye is visiting
 UIUC.}
\and
Peijun Xiao
\thanks{Coordinated Science Laboratory, Department of ISE, University of Illinois at Urbana-Champaign, Urbana, IL 
  (\texttt{peijunx2@illinois.edu}).}
\and 
Ruoyu Sun
\thanks{Coordinated Science Laboratory, Department of ISE, University of Illinois at Urbana-Champaign, Urbana, IL (\texttt{ruoyus@illinois.edu}).}
}
\begin{document}

\maketitle

\begin{abstract}
In distributed optimization, a popular technique to reduce communication is quantization. In this paper, we provide a general analysis framework for inexact gradient descent that is applicable to quantization schemes.
We also propose a quantization scheme Double Encoding and Error Diminishing (DEED).
DEED can achieve small communication complexity in three settings: 
frequent-communication large-memory, frequent-communication small-memory, and 
infrequent-communication (e.g. federated learning). 
More specifically, in the frequent-communication large-memory setting, DEED can be easily combined with Nesterov's method, so that the total number of bits required is $ \tilde{O}( \sqrt{\kappa} \log 1/\epsilon )$, where $\tilde{O}$ hides numerical constant and $\log \kappa $ factors. 
In the frequent-communication small-memory setting, DEED combined with SGD only requires $\tilde{O}( \kappa  \log 1/\epsilon)$ number of bits in the interpolation regime. 
In the infrequent communication setting, DEED combined with Federated averaging requires a smaller total number of bits than Federated Averaging. 
All these algorithms converge at the same rate as their non-quantized versions, 
while using a smaller number of bits.
\end{abstract}



\section{Introduction}\label{sec: introduction}

There is a surge of interest in distributed learning for large-scale computation in recent decade \cite{recht2011hogwild, dean2012large,  coates2013deep, li2014scaling, chilimbi2014project, xing2015petuum, chen2015mxnet, abadi2016tensorflow}. 
In the past few years, new application scenarios such as
multi-GPU computation \cite{lin2017deep, seide20141, gupta2015deep, alistarh2017qsgd, wen2017terngrad},  mobile edge computing \cite{mcmahan2016communication, mao2017survey} and federated learning \cite{li2019federated, yang2019federated}
have received much attention.
These systems are often bandwidth limited,
 and one important question in distributed learning is how to reduce the communication complexity.

 


A natural method to reduce the communication complexity 
is to compress the gradients transmitted between the machines.  A host of recent works proposed to quantize the gradients and transfer the quantized gradients to save the communication cost \cite{lin2017deep, seide20141, gupta2015deep, alistarh2017qsgd, wen2017terngrad}. These gradient quantization methods are successfully applied on training large-scale problems, and are shown to achieve similar performance to the original methods using less training time \cite{alistarh2017qsgd, wen2017terngrad}.
Nevertheless, their theoretical properties, especially the relation with
their un-quantized versions, are not well understood. 
A recent work \cite{mishchenko2019distributed} proposed DIANA and proved its linear convergence rate in a large-memory setting.
Another work \cite{liu2019double} proposed DORE which converges linearly
to a bounded region in a small-memory setting. 
Nevertheless, these works often study a specific distribution optimization problem,
and do not directly apply to other settings.
For instance, they assume frequent communication, thus do not immediately apply to infrequent-communication due to the additional error caused by the infrequent updates. 
While it may be possible to further adapt these works to new settings,
one might still wonder whether there is a unified and principled 
method to design quantization methods.

Our work is motivated by the classical work on inexact gradient descent (GD) \cite{bertsekas2000gradient}, which provides a unified theorem
for inexact GD and covers a number of algorithms (including SGD). 
We provide a general analysis framework
for inexact GD that is tailored for quantization schemes.
We also propose a quantization scheme Double Encoding and Error Diminishing (DEED).
We summary our contributions below.
\begin{itemize}
    \item \textbf{General convergence analysis}.
    We provide a general convergence analysis for inexact gradient descent algorithms using absolute errors in encoding.
This can potentially cover a large number of quantized gradient-type methods. 
    \item \textbf{General quantization scheme}. We propose a general quantization scheme Double Encoding and Error Diminishing (DEED).
    This scheme can be easily combined with existing
     optimization methods, and provably saves bits in communication
     in three common settings of distributed optimization.
     
     \item \textbf{Improved communication complexity}.
     In the most basic setting of large-memory frequent-communication, 
    our theoretical bound of DEED-GD (apply DEED to GD) 
     is at least $ F $ times better than existing works,
     where $ F  $ is the number of bits representing a real number. 
    
\end{itemize}
The motivation and details of our general convergence analysis and general quantization scheme will be given in Section \ref{sec: general framework} and Section \ref{sec: general scheme}. Discussions of related work are in the appendix. We now summarize the results of our general quantization scheme in three common settings.

\textbf{Frequent-communication large-memory}. We propose an algorithm DEED-GD and show it converges linearly while saves communication in bits. We further combine DEED-GD with Nesterov's momentum to obtain an accelerated version called A-DEED-GD, which achieves the state-of-art convergence rate and save the most total number of bits in communication as shown in Table \ref{table: GD}.

\textbf{Frequent-communication small-memory}. We adopt the Weak Growth Condition from \cite{vaswani2018fast} and prove that our algorithm DEED-SGD converges to the optimal solution at a linear rate. We compute the total number of bits to achieve a certain accuracy for both our algorithm and other works. The comparison is presented in Table \ref{table: SGD}.

\textbf{Infrequent-communication.} We propose DEED-Fed and provide the first explicit bound on the number of bits to achieve a certain accuracy in Federate Learning under non-i.i.d assumptions.
Our results can be applied to both large-memory and small memory settings
and both full-participant and partial-participant settings.
To our best knowledge, \cite{reisizadeh2019fedpaq} is the only work that also provide a convergence rate for infrequent-communication setting under realistic assumptions. However, due to the limitation of their framework, they did not do quantization in broadcasting step, which results in a great waste of communication. Our algorithm could save up to $FdN$ bits per $E$ iterations\footnote{Please check Table \ref{table_large} for definition of $F, d$ and $N$.}.

\begin{savenotes}
\begin{table}[h]\label{table_large}
    \centering
    \renewcommand\arraystretch{1.26}
    \begin{tabular}{|c|c|c|c|}
    \hline
        Algorithm & Iterations & Bits per iteration & Total bits \\
    \hline
        \textbf{DEED-GD} & $\tilde{O}(\kappa\log\frac{1}{\varepsilon})$ & $\tilde{O}(dN)$ & $\tilde{O}(dN\kappa\log\frac{1}{\varepsilon})$\\
    \hline
        \textbf{A-DEED-GD} & $\tilde{O}(\sqrt{\kappa}\log\frac{1}{\varepsilon})$ & $\tilde{O}(dN)$ & $\tilde{O}(dN\sqrt{\kappa}\log\frac{1}{\varepsilon})$\\
    \hline
        \textbf{DIANA}\cite{mishchenko2019distributed} & $\tilde{O}(\kappa\log\frac{1}{\varepsilon})$ & $\tilde{O}(dNC)$ & $\tilde{O}(dNC\kappa\log\frac{1}{\varepsilon})$\\
    \hline
        \textbf{ADIANA}
        \cite{li2020acceleration} & $\tilde{O}(\sqrt{\kappa}\log\frac{1}{\varepsilon})$ & $\tilde{O}(dNC)$ & $\tilde{O}(dNC\sqrt{\kappa}\log\frac{1}{\varepsilon})$\\
    \hline
        \textbf{DORE}\cite{liu2019double} & $\tilde{O}(\kappa\log\frac{1}{\varepsilon})$ & $\tilde{O}(dN)$ & $\tilde{O}(dN\kappa\log\frac{1}{\varepsilon})$\\ 
    \hline
        \textbf{DQGD}\cite{khirirat2018distributed} & / & $\tilde{O}(dNC)$ & / \\ 
    \hline
        \textbf{QSVRG}\cite{alistarh2017qsgd} & $\tilde{O}(\kappa\log\frac{1}{\varepsilon})$ & $\tilde{O}(dNC)$ & $\tilde{O}(dNC\kappa\log\frac{1}{\varepsilon})$\\
    \hline
    \end{tabular}
    \vspace{1.2mm}
    \caption{Summary of our theoretical results in minimizing a strongly-convex function as \eqref{main problem} in large-memory setting with $N$ computing nodes. We denote the condition number of the problem as $\kappa$ and the problem dimension as $d$. $\tilde{O}(\cdot)$ omits $\log\kappa$ and constant terms. For algorithm \textbf{DIANA}, \textbf{ADIANA}, \textbf{DQGD} and \textbf{QSVRG}, because they didn't do double quantization, they can choose either broadcasting (fully connected network) or transmitting full gradient from center to workers (star network). $C=N$ in the first case, and $C=F$ otherwise, where $F$ is the number of bits representing a real number. 
    \textbf{DQGD} cannot converge to the optimal solution, and is not
    directly comparable to our result; so we use / in the iteration cell. 
    }
    \label{table: GD}
\end{table}
\end{savenotes}

\begin{table}[t]\label{table_small}
    \centering
    \renewcommand\arraystretch{1.26}
    \begin{tabular}{|c|c|c|c|}
    \hline
        Algorithm & Iterations & Bits per iteration & Total bits \\
    \hline
        \textbf{DEED-SGD} (WGC) & $\tilde{O}(\kappa\log\frac{1}{\varepsilon})$ & $\tilde{O}(dN)$ & $\tilde{O}(dN\kappa\log\frac{1}{\varepsilon})$\\
    \hline
        \textbf{DIANA}\cite{mishchenko2019distributed} & $\tilde{O}(\frac{\kappa}{\varepsilon^2})$ & $\tilde{O}(dNC)$ & $\tilde{O}(dNC\frac{\kappa}{\varepsilon^2})$\\
    \hline
        \textbf{DORE}\cite{liu2019double} & / & $\tilde{O}(dN)$ & / \\
    \hline
        \textbf{QSVRG}\cite{alistarh2017qsgd} & $\tilde{O}(\kappa\log\frac{1}{\varepsilon})$ & $\tilde{O}(dNC)$ & $\tilde{O}(dNC\kappa\log\frac{1}{\varepsilon})$\\
    \hline
    \end{tabular}
    \vspace{1.2mm}
    \caption{Summary of our theoretical results in solving problem \ref{main problem} in small-memory setting. All notations are the same as table \ref{table_large}.}
    \label{table: SGD}
    \vspace{-3mm}
\end{table}

\section{General Analysis Framework}\label{sec: general framework}

\subsection{Motivation}

Our work is inspired by the classical work by Bertsekas and Tsitsiklis \cite{bertsekas2000gradient} 
which  provides a general convergence analysis of inexact gradient descent methods. 
Since quantization also introduces error to the update direction, 
a natural idea is to apply the general framework of \cite{bertsekas2000gradient}
to design and analyze quantization methods. 
However, directly applying \cite{bertsekas2000gradient} may not provide the best result, because in quantization methods, we have a rather strong control of the ``error'' in the algorithm.
This situation is different from the worst-case or random error considered in \cite{bertsekas2000gradient}. 
Our idea is to develop a modified analysis framework that can accomodate the extra freedom of controlling the quantization error,
so as to obtain stronger results compared to directly applying \cite{bertsekas2000gradient}.
We hope that such a general frame work can help us design and analyze quantization algorithms for different settings in a unified manner. 

 
A key element in this analysis is to use absolute error in quantization. Many theoretical works on quantization methods do not
explicitly consider absolute error, but focus on relative error\footnote{The definition of quantization by absolute error $\alpha$ is in Definition \ref{def: absolute error}. The definition of quantization by relative error $\alpha$ is similar. We just need to replace $\alpha$ in the RHS of the inequality in Definition \ref{def: absolute error} by $\frac{\alpha}{\|w\|}$.} \cite{alistarh2017qsgd, khirirat2018distributed, tang2019doublesqueeze}. These two types of errors are equivalent in one step quantization and only differ by scaling,
but not equivalent in a multi-iteration convergence analysis.
In our framework, we use absolute error in quantization so that 
we can sum up the absolute error over iterates and control the rate.


\textbf{Overview of our general analysis framework.} We first discuss why we use absolute error in quantization, and then we provide a general analysis of convergence rate. Our general analysis is not limited to algorithms that perform quantization on gradient or gradient difference. Algorithms that do quantization on weights or combination of weights and gradients  are also covered in our framework.
In this general analysis framework, the key component is ``effective error'' which we define as the error occurred at the weight. For example, in frequent-communication large-memory setting, the effective error is absolute error times the learning rate.
We show that an algorithm converges only if the effective error diminishes to zero.
In addition, we establish the convergence rate in terms of the learning rate
and the effective error. 
As promised, this is a general framework, so it should be applicable to new settings with similar proofs as we will show in Section \ref{subsec: theoratical analysis}.

\subsection{Content of the Framework}

We consider a star-network where there are $N$ computing
nodes and one central node.
Suppose we want to minimize a function $f: \mathbb{R}^d \rightarrow \mathbb{R}$ decomposed as 
\begin{align}
\label{main problem}
    f(w)=\frac{1}{N}\sum\limits_{i=1}^N f_i(w),
\end{align}
where each function $f_i$ is held on the $i$-th machine (or computing node), $i=1, \dots, N$. We assume each $f_i$ is $L$-smooth and $f$ is $\mu$-strongly convex. 
We define the condition number $\kappa:=\frac{L}{\mu}$. The formal definition of $L$-smoothness and strong convexity are standard, so are given in the appendix.

\begin{definition}\label{def: absolute error}
\textbf{Absolute error encoding-decoding procedure}. 
An $\alpha$-encoding-scheme of a vector $w$ consists of an encoding algorithm $E:S\times\Xi\rightarrow\mathbb{Z}^+$ and a decoding algorithm $D:\mathbb{Z}^+\rightarrow\mathbb{R}^d$, where $S$ is the set of vector we need to quantize, $\Xi$ is the set of random seeds, $\mathbb{Z}^+$ is the set of positive integers, and $\mathbb{R}$ stands for the real domain. We assume:
\begin{itemize}
    \item Unbiased coding, i.e. $\mathbb{E}_\xi[D\circ E(w,\xi)]=w$.
    \item The absolute error is bounded by $\omega$, i.e
    .  $\|D\circ E(w,\xi)-w\|\leq \alpha. $
\end{itemize}
Besides, the number of bits of this procedure is $\mathbb{E}_\xi\lceil\log_2\left| E(S,\xi)\right|\rceil$.

\end{definition}


The lemma below gives an upper bound and lower bound number of 
the bits with given precision.

\begin{lemma}\label{quan_vect_lemma}
Given a set $S=\left\{\left.x\in\mathbb{R}^d\right| \|w\|_2\leq M\right\}$, any (random) quantization algorithm that encoding a vector in $S$ by absolute error $\sigma$
takes at least $\left\lceil d\log_2 \frac{1}{\varepsilon}\right\rceil$ (in expectation) number of bits, where $\epsilon = \frac{\sigma}{M}$ . In addition, there exists a (random) algorithm that takes only $\left\lceil 1.05d + d\cdot\log_2 \frac{1+2\varepsilon}{\varepsilon}\right\rceil$ bits (in expectation)\footnote{This bound is pessimistic when $\varepsilon$ is large. For example, when $\varepsilon\geq1$, the lower bound is 0, however, we cannot only use 0 bits because of unbiased property. In this case, we can use sparsity to get lower bits.}.
\end{lemma}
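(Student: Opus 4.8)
Let me think about how to prove this. We need a lower bound and an upper bound on the number of bits for encoding a vector in a ball of radius $M$ with absolute error $\sigma$, where $\varepsilon = \sigma/M$.

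Lower bound: The key is a volume/counting argument combined with the unbiasedness constraint. Since $\|D\circ E(w,\xi) - w\| \le \sigma$ for all $w,\xi$, for a fixed seed $\xi$ the decoding map's image, intersected with the ball $S$, must have the property that every point of $S$ is within distance $\sigma$ of some codeword. So the balls of radius $\sigma$ around the codewords cover $S$. A volume comparison gives that the number of codewords is at least $(M/\sigma)^d = \varepsilon^{-d}$ (comparing the volume of the radius-$M$ ball to that of a radius-$\sigma$ ball; actually we should be slightly careful since codewords might lie outside $S$, but enlarging to radius $M+\sigma$ and using that $\sigma \le M$ only costs constants — for the clean bound $\lceil d\log_2(1/\varepsilon)\rceil$ we just compare $\text{vol}(B_M)$ with $\text{vol}(B_\sigma)$, noting that covering $S$ requires the radius-$\sigma$ balls to have total volume at least $\text{vol}(B_M)$). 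Hence $|E(S,\xi)| \ge \varepsilon^{-d}$, so $\lceil \log_2 |E(S,\xi)|\rceil \ge \lceil d\log_2(1/\varepsilon)\rceil$, and taking expectation over $\xi$ preserves this. Note unbiasedness is not even needed for the lower bound; it is only relevant for the footnote's remark about $\varepsilon \ge 1$. The mild subtlety is handling the expectation and the ceiling correctly (Jensen on $\lceil\cdot\rceil$), but that is routine.

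Upper bound: Construct an explicit scheme. First quantize the norm $\|w\|$ crudely (this costs $O(1)$ amortized bits or can be folded in), then place a lattice — e.g. a scaled integer lattice $\delta\mathbb{Z}^d$ with $\delta$ chosen so that the covering radius $\frac{\sqrt d}{2}\delta \le \sigma$, or better, use a lattice with good covering radius — inside the enlarged ball of radius $M+\sigma$. The number of lattice points is at most $\text{vol}(B_{M+\sigma+\text{diam of cell}})/\delta^d \le (\text{const})\cdot ((M+2\sigma)/\delta)^d$. To get unbiasedness, randomly dither the lattice (shift by a uniform random offset over a fundamental cell) and then randomly round $w$ to one of the nearby lattice points with probabilities chosen so the conditional expectation is exactly $w$ — this is standard randomized lattice quantization (subtractive dithering). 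With an efficiently-chosen lattice (the $1.05d$ term is exactly the $O(d)$ overhead in the best sphere-covering lattices / or simply from $\text{vol}(B_1) \approx (2\pi e/d)^{d/2}$ type estimates packaged into a $2^{1.05d}$ factor), the count becomes $2^{1.05 d}\cdot\left(\frac{1+2\varepsilon}{\varepsilon}\right)^d$, giving the claimed bit count after taking $\log_2$ and the ceiling.

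The main obstacle will be the upper-bound construction: getting the constant $1.05$ right requires either citing a good lattice covering (the best known covering density of lattices in $\mathbb{R}^d$ behaves like $2^{O(d)}$, and $1.05$ is a clean numerical bound one can extract), together with correctly engineering the randomized rounding so that both the unbiasedness and the hard absolute-error bound $\|D\circ E(w,\xi) - w\| \le \sigma$ hold simultaneously — randomized rounding to lattice points within the covering radius achieves both, but one must verify the error never exceeds $\sigma$ even for the realized (not just expected) output, which forces the lattice cells to have diameter at most $2\sigma$ and hence pins down $\delta$. Everything else (the volume computations, the norm pre-quantization, passing $\log_2$ through expectations) is bookkeeping.
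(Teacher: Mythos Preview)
Your lower bound argument is exactly the paper's: fix a seed, observe that the decoded codewords must $\sigma$-cover $S$, and compare volumes to get $|E(S,\xi)|\ge (M/\sigma)^d$.

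For the upper bound you have the right skeleton but are overcomplicating it. The paper uses no norm pre-quantization, no dithering, and no clever lattice---just the plain cubic grid with side $2\sigma/\sqrt{d}$ (so each cube has diameter $2\sigma$). A point $w\in S$ lies in one cube and is encoded unbiasedly by randomized rounding, coordinate-by-coordinate, to that cube's vertices; the realized error is then at most the diagonal's half-length, $\sigma$. The count of cubes meeting $S$ is bounded by $\mathrm{vol}(B_{M+2\sigma})/(2\sigma/\sqrt d)^d$, and the constant $1.05$ falls out of your second guess: Stirling on $\Gamma(d/2+1)$ gives $\log_2\bigl(\pi^{d/2}/\Gamma(d/2+1)\cdot(\sqrt d/2)^d\bigr)\le d\log_2\sqrt{\pi e/2}\le 1.05d$. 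So you do not need good-covering lattices to hit $1.05$; the integer grid already does it once you account for the ball's small volume relative to the cube.
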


For convenience, we define $Q(\cdot, \varepsilon)$ as a coding procedure with maximal precision $\varepsilon$ with corresponding encoding and decoding procedure $E_\varepsilon$ and $D_\varepsilon$. The output vector $Q(w,\varepsilon)$ is $D_\varepsilon\circ E_\varepsilon(w,\xi)$. 


To derive a general analysis for quantized GD in minimizing the problem \eqref{main problem}, we consider a general series of functions $F_t: \mathbb{R}^d\rightarrow\mathbb{R}^d$, $t\geq0. $
The definition of $F$ depends on the specific problems.
For example, in frequent-communication, $F_t(w)$ is defined as the function mapping  $w_t$ to $w_{t+1}$ in the $t$-th iteration. 
The only assumption on $F_t$ in our framework is $F_t$ is a continuous function with Lipschitz constant $c_t<1$ with the same fixed point $w^*$. In most cases, this can be easily derived by strongly convexity assumption.



\begin{assumption}\label{assumption: Ft}
For $t = 1, 2, \cdots$, $F_t$ is a continuous function with Lipschitz constant $c_t<1$ and denote $w^*$ as the unique fixed point of all $F_t$.
\end{assumption}

\begin{theorem}\label{thm: general convergence}
Suppose Assumption \ref{assumption: Ft} holds and $\{w_t\}$ is a sequence generated by \begin{align}\label{eqn: general iterate update }
    w_{t+1}=F_t(w_t)+e_t,
\end{align}
for some chosen initial value $w_0$ and $e_t$ is a zero-mean random  noise depending on the (iteration) history and is bounded by $\alpha_t$. Define series $C_k^2=\sum\limits_{i=0}^{k-1}\alpha_i^2\prod\limits_{j=i+1}^{k-1}c_j^2$ and $D_k^2=\prod\limits_{i=0}^{k-1}c_i^2$.
Then we have 
\begin{eqnarray}\label{inequ_general_convergence}
\mathbb{E}\left[\|w_{T}-w^*\|^2\right]\leq D_T^2\|w_0-w^*\|^2+C_T^2.
\end{eqnarray}
In addition, there exists functions series $\{F_i\}_{i\geq0}$ and 
 noise $ \{ e_t \}_{t\geq 0} $ to make the inequality hold. Besides, if we suppose the sequence of the Lipschitz constants $\{c_i\}$ is non-decreasing, then the right hand side of (\ref{inequ_general_convergence}) converges linearly if and only if all $c_t$'s are always bounded above by a constant $c<1$ and $\alpha_t$ converges to 0 linearly.
\end{theorem}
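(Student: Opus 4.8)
The plan is to split the statement into its three components---the upper bound \eqref{inequ_general_convergence}, the existence of an instance attaining equality, and the linear-convergence characterization---and prove them in that order.

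\emph{The bound.} Since $w^*$ is a common fixed point, $w_{t+1}-w^*=\bigl(F_t(w_t)-F_t(w^*)\bigr)+e_t$. I would condition on the history $\mathcal{F}_t=\sigma(w_0,e_0,\dots,e_{t-1})$, expand $\|w_{t+1}-w^*\|^2$, and note the cross term $2\langle F_t(w_t)-F_t(w^*),\mathbb{E}[e_t\mid\mathcal{F}_t]\rangle$ vanishes because $e_t$ is zero-mean given the history while $F_t(w_t)-F_t(w^*)$ is $\mathcal{F}_t$-measurable. Using $\|F_t(w_t)-F_t(w^*)\|\le c_t\|w_t-w^*\|$ and $\|e_t\|\le\alpha_t$, then taking total expectations, gives the one-step recursion $a_{t+1}\le c_t^2 a_t+\alpha_t^2$ with $a_t:=\mathbb{E}\|w_t-w^*\|^2$. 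Unrolling from $0$ to $T-1$ produces exactly $a_T\le D_T^2 a_0+C_T^2$; this step is routine.

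\emph{Tightness.} For equality I would take the scalar instance $d=1$, $F_t(w)=c_tw$ (Lipschitz constant $c_t<1$, fixed point $w^*=0$) and $e_t=\alpha_t\xi_t$ where $\xi_0,\xi_1,\dots$ are i.i.d.\ Rademacher signs. Then $\|e_t\|\equiv\alpha_t$ and $\mathbb{E}[e_t\mid\mathcal{F}_t]=0$, so both inequalities used above become equalities (the contraction because $F_t$ is linear, the noise bound because $|\xi_t|=1$), and $a_{t+1}=c_t^2a_t+\alpha_t^2$ holds with equality, giving $a_T=D_T^2 w_0^2+C_T^2$.

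\emph{The characterization.} Write $b_T$ for the right-hand side $D_T^2\|w_0-w^*\|^2+C_T^2$; I take ``converges linearly'' to mean $b_T\le K\rho^T$ for some $\rho\in(0,1)$, and I assume $w_0\ne w^*$ (otherwise the $c_t$-part is vacuously false, e.g.\ with $\alpha_t\equiv0$). For ``if'', assume $c_t\le c<1$ and $\alpha_t\le A\gamma^t$ with $\gamma<1$; then $D_T^2\le c^{2T}$ and $C_T^2\le A^2\sum_{i=0}^{T-1}\gamma^{2i}c^{2(T-1-i)}$, and a geometric-series estimate (splitting on $\gamma<c$, $\gamma>c$, $\gamma=c$, the last adding a polynomial factor absorbed into a slightly larger rate) shows $b_T$ decays geometrically. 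For ``only if'', use $b_T\ge D_T^2\|w_0-w^*\|^2$ and $b_T\ge C_T^2\ge\alpha_{T-1}^2$ (the latter being the $i=T-1$ summand of $C_T^2$, whose product factor is empty): the second gives $\alpha_{T-1}\le\sqrt{K}\,(\sqrt{\rho})^T$, so $\alpha_t\to0$ linearly; the first gives $\prod_{i=0}^{T-1}c_i^2\le(K/\|w_0-w^*\|^2)\rho^T$, and here monotonicity enters---if $\sup_t c_t=1$ then, since $\{c_i\}$ is non-decreasing, for every $\epsilon>0$ eventually $c_i\ge1-\epsilon$, so $\prod_{i=0}^{T-1}c_i^2\ge(\text{positive constant})\cdot(1-\epsilon)^{2T}$, contradicting geometric decay at rate $\rho$ once $(1-\epsilon)^2>\rho$; hence $c_\infty:=\lim_t c_t<1$ and all $c_t\le c_\infty<1$.

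The main obstacle is the $c_t$-part of ``only if'': one must convert ``$\prod_{i<T}c_i^2$ decays geometrically'' into ``$\sup_t c_t<1$'', and this is genuinely false without the monotonicity hypothesis (a sparse subsequence of the $c_i$ creeping up to $1$ while the rest stay at $1/2$ keeps the product geometrically small yet $\sup_t c_t=1$). Identifying precisely where the non-decreasing assumption is used, and phrasing the statement so that the degenerate case $w_0=w^*$ is excluded, is the delicate point; everything else is bookkeeping with geometric sums.
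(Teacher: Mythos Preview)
Your proposal is correct and, for the upper bound and the sufficiency direction, essentially identical to the paper's proof. Your tightness construction is a scalar Rademacher version of the paper's, which instead takes $F_t(x)=c_t x$ in $\mathbb{R}^d$ and chooses $e_t$ to be a random $\pm$ sign times a vector of length $\alpha_t$ \emph{orthogonal} to $F_t(w_t)-w^*$; both yield the equality $\|w_{t+1}-w^*\|^2=c_t^2\|w_t-w^*\|^2+\alpha_t^2$, and yours is the simpler instance.

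The one genuinely different step is the necessity argument for the $c_t$-bound. The paper takes a more indirect route: it proves the auxiliary lemma that $x\mapsto \ln(1/(1-x))/x$ is increasing on $(0,1)$, deduces $\ln(1/c_t)\le C_0(1-c_t)$ with $C_0=\ln(1/c_0)/(1-c_0)$, and then passes from $\sum_i \ln(1/c_i)\gtrsim T$ to a Ces\`aro-type lower bound on $\frac{1}{T}\sum_{i\ge k}(1-c_i)$, finally using monotonicity to extract $1-c_k\ge \text{const}>0$. Your argument---if $c_t\uparrow 1$ then the tail of the product is at least $(1-\epsilon)^{2T}$, contradicting geometric decay once $(1-\epsilon)^2>\rho$---is shorter and avoids the log lemma entirely; it isolates exactly where monotonicity is used (to guarantee the \emph{tail} satisfies $c_i\ge 1-\epsilon$) and makes the failure without monotonicity transparent via your sparse-subsequence remark. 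Your explicit flagging of the degenerate case $w_0=w^*$ is also a point the paper does not address.
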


\textbf{Remark.} We leave the deterministic version of Theorem \ref{thm: general convergence} in the appendix. It can be useful in proving the convergence of deterministic algorithm.

According to Theorem \ref{thm: general convergence}, to make $\{w_t\}$ converge to $w^*$, we need both $D_k$ and $C_k$ converge to 0. In frequent communication setting, the $D_k\rightarrow0$ implies the summation of learning rate diverges.
Then $C_k\rightarrow0$ implies the effective error converges to 0.

The last statement of theorem \ref{thm: general convergence} implies that for any quantized GD algorithms under our framework, we should take constant learning rate and linearly decreasing absolute error for linear convergence.


\section{Application of DEED in Three Settings} \label{sec: general scheme}

Based on Theorem \ref{thm: general convergence}, we notice that using diminishing error in each iteration can guarantee fast convergence. However, according to lemma \ref{quan_vect_lemma}, the maximal norm of the vector we want to quantize should also be diminishing, otherwise the number of bits may explode. To avoid explosion, we choose to quantize on gradient difference instead of gradient. The intuition is that $\|\nabla f_i(w_{t+1})-\nabla f_i(w_t)\|\leq L\|w_{t+1}-w_t\|$ 
who goes to zero as the iterate sequence converges. Finally, to save the communication in broadcasting, we perform quantization both on the computation nodes and the center node, i.e. ``double encoding''. We name our general quantization scheme as Double Encoding and Error Diminishing (DEED).

Based on the general quantization scheme DEED, we introduce algorithms for three common settings in distributed optimization for Problem \eqref{main problem}:     
frequent-communication large-memory, 
frequent-communication small-memory, and 
infrequent-communication. 
Frequent-communication means the every computing node communicates with the center node after every update, while this is not the case in infrequent-communication. 
In large-memory setting, each local server $f_i$ has enough memory to hold its data and use them to compute the full-batch gradient of $f_i$. 
In limited memory setting (e.g. only one GPU is available in computing) that each server is only able to compute the stochastic gradients since the data cannot be fit into one server.

\subsection{Frequent-communication large-memory setting}
We distinguish the large-memory setting and small-memory setting
for the following reasons.

First, from a practical side, different system designers have different memory budget. Some big companies can perform computation using 10,000+ GPUs or CPUs (e.g. \cite{goyal2017accurate,you2018imagenet,yamazaki2019yet}),
 while most researchers and companies can only use few GPUs or 
 a moderate number of CPUs. The problems they are facing are indeed different, since in large-memory setting we can implement full-batch GD
 \footnote{Disclaimer: we discuss large-memory setting
mainly due to theoretical interest. We do not run simulation on
 a large number of GPUs, though we will mimic 
 the large-memory setting. }
 (or large-batch SGD which are quite close to full-batch GD).
Note that ``large'' is a relative term;
 if the system designer has only 10 CPUs or even 2 GPUs, but
 all data can be loaded into the memory of these machines,
 then this is also a large-memory setting. 
 In a small-memory setting, 
 we can only load a mini-batch of the dataset into one machine at a time.
This necessitates the usage of Stochastic Gradient Descent (SGD).

 
Second, from a theoretical side, quantized gradient methods should be no better than gradient methods that utilize infinite-bandwidth. 
To judge the performance of quantized gradient methods,
one useful metric is the gap between quantized methods and
their non-quantized counter-parts.
It is impossible to prove linear convergence of
quantized SGD in the limited-memory setting without further assumptions, since even with infinite bandwidth SGD cannot achieve linear convergence rate \cite{luo1991convergence}. In contrast, with infinite bandwidth GD can achieve linear convergence rate. Due to different upper limits, large-memory and small-memory settings should be treated separately.

The frequent-communication large-memory version of DEED is described in Algorithm \ref{our_gd}.



\begin{algorithm}[ht]
\SetAlgoLined
 Initialization: Each server $i\in[N]$ holds $w_0=s_{-1}^i=v_{-1}=0$, server 0 holds $v_{-1}=0$, $k=0$\;
Hyper-parameters:  $\eta\in\left(0, \frac{2}{L+\mu}\right]$, $c=1-\eta\mu$, $ c'\in (c, 1) $; parameter $s \in \mathbb{R}_+ $\;
 \While{the precision is not enough}{
  \For{$i\in[N]$}{
   server $i$ computes $g_k^i=\nabla f_i(w_k)$\;
   server $i$ does quantization $d_k^i=Q(g_k^i - s_{k-1}^i, \frac{s{c'}^{k+1}}{2})$\;
   server $i$ updates $s_k^i = d_k^i + s_{k-1}^i$\;
   server $i$ send $d_k^i$ to server 0\;
  }
  server 0 computes $s_k = \frac{1}{N}\sum\limits_{i=1}^N d_k^i + s_{k-1}$\;
  server 0 does quantization $u_k = Q(s_k - v_{k-1}, \frac{s{c'}^{k+1}}{2})$\;
  server 0 sends $u_k$ to server $i,\forall i\in[N]$\;
  server 0 updates $v_k = u_k + v_{k-1}$\;
  \For{$i\in[N]$}{
   server $i$ updates $v_k = u_k + v_{k-1}$\;
   server $i$ updates $w_{k+1}= w_k - \eta v_k$\;
  }
 $k = k + 1$\;
 }
 \caption{Double Encoding and Error Diminishing Gradient Descent (DEED-GD)}
 \label{our_gd}
\end{algorithm}

\subsection{Frequent-communication small-memory}\label{sec: Quantization in stochastic gradient descent}


Now we consider the small-memory setting with frequent-communication. 
As mentioned earlier, without extra assumptions, it is impossible
to prove linear convergence rate of vanilla SGD. 

 There are two lines of research that
can prove linear convergence of SGD-type methods.
Along the first line, a few variance-reduction based methods
such as SVRG \cite{johnson2013accelerating}, SAGA \cite{defazio2014saga}
and SDCA \cite{shalev2013stochastic}
can achieve linear convergence.
Along the second line, with extra assumption
such as \textbf{WGC (Weak Growth Condition)},
vanilla SGD with constant stepsize can 
already achieve linear convergence \cite{vaswani2018fast}. 
This line of research is strongly motivated by
the interpolation assumption in machine learning
that the learner can fit the data, which is considered
a reasonable assumption in recent literature (e.g. \cite{vaswani2018fast,liu2018mass}).
Therefore, we focus on designing quantization algorithms
along the second line.






\begin{assumption} (WGC Assumption \cite{vaswani2018fast})
Suppose $f:\mathbb{R}^d\rightarrow\mathbb{R}, f(x)=\frac{1}{N}\sum\limits_{i=1}^N f_i(w)$ is the objective function. Stochastic ``functions''(algorithms) $\{\overline{\nabla}_i\}_{i\in[N]}$ satisfy \textbf{WGC} if
1) $\mathbb{E}[\overline{\nabla}_i (f_i,w)]=\nabla f_i(w)$, $\forall i\in[N], w\in\mathbb{R}^d$;
2) $\frac{1}{N}\sum\limits_{i=1}^N\mathbb{E}_{\overline{\nabla}_i}\left[\|\overline{\nabla}_i f_i(w)\|^2\right]\leq 2\rho L(f(w)-f(w^*))$.
\end{assumption}

To adapt the frequent-communication small-memory, we introduce DEED-SGD. The only differences between DEED-GD and DEED-SGD are 1) we use $\overline{\nabla}_i$ instead of the accurate gradient; 2) we use different quantization level. The full description of  DEED-SGD is given in the appendix.


\subsection{Infrequent-communication}\label{sec: Quantization in Federated Learning}
A main area in distributed optimization with infrequent-communication is Federated Learning (FL), which involves training models over remote devices or data centers, 
such as mobile phones or hospitals, and keeping the data localized due to privacy concern or communication efficiency \cite{li2019federated, lim2020federated}. In FL, some computation nodes might no have full participation in the updates and the data sets are non-iid. 
We remark that infrequent-communication is a generic design choice,
and can be used in a data-center setting as well. 
Although existing works like QSGD do not explore this degree of freedom,
infrequent communication can be combined with QSGD as well. 
Nevertheless, the theoretical benefit of the combination was not understood before
(partially because the total number of bits was not a focus of previous works,
and linear convergence rate was derived only recently \cite{mishchenko2019distributed}).


A classical algorithm in FL is Federated Averaging algorithm (FedAvg) which performs local stochastic gradient descent on computation nodes for every $E$ iterations with a server that performs model averaging \cite{mcmahan2016communication}.
Although there have been much efforts developing convergence guarantees for FedAvg, \cite{zhou2017convergence, stich2018local, wang2018cooperative, khaled2019first, yu2019parallel, woodworth2018graph,li2019convergence}, 
there is relatively scarce theoretical results on the combination
of FedAvg and quantization \cite{reisizadeh2019fedpaq, khaled2019first }. \cite{reisizadeh2019fedpaq, khaled2019first} either make unrealistic assumptions or only perform quantization on computation nodes, and thus they are not efficient as our double encoding scheme. 

We propose an algorithm called DEED-Fed. The difference between DEED-GD and DEED-Fed is that in FEED-Fed, the maximal error at iteration $k$ is proportional to learning rate $\eta_k$. Due to space limitation, a detailed comparison between the three proposed algorithms are given in the appendix.

\subsection{Theoretical Analysis}\label{subsec: theoratical analysis}
In this section, we give the computational and communication complexity of the algorithms DEED-GD, DEED-SGD and DEED-Fed. Since all of them are in the same framework, their proofs and results are similar. We put it into one single theorem.

\begin{theorem}\label{thm: three theorems}
Consider solving Problem \eqref{main problem} under one of the three settings by the corresponding algorithms (DEED-GD, DEED-SGD or DEED-Fed). Assume $f_i$ is $L$-smooth and $f$ is $\mu$ convex. Assume all $f_i$'s are $\mu$ convex in \textbf{DEED-Fed}. Denote $w_t$ as the iterate at iteration $t$ and $w^*$ is the optimal solution of Problem \eqref{main problem}.

For DEED-GD, we choose the learning rate $\eta_t\equiv\eta=\frac{2}{L+\mu}$, $c:=1-\eta\mu$, $c<c'<1$, and the maximal error at iteration $t$ is $s{c'}^{t+1}/2$ where $s$ is the quantization level.

For DEED-SGD, we assume (Weakth Growth Condition) WGC is satisfied for approximate gradient $\overline{\nabla}_i$ for every $f_i$ with parameter $\rho$. We choose the learning rate $\eta_t\equiv\eta=\frac{1}{\rho L}$, $c:=1-\eta\mu$, $c<c'<1$, and the maximal error at iteration $t$ is $\sqrt{s{c'}^{k+1}}/2$ and the error is unbiased.

For DEED-Fed, we choose the learning rate $\eta_t:=\frac{\beta}{t+\gamma}$ for some $\beta>\frac{1}{\mu}$, $\gamma>1$ such that $\eta_0\leq\frac{1}{4L}$ and $\eta_t\leq 2\eta_{t+E}$. Let the maximal error at iteration $t\in\{0,E,2E,\cdots\}$ be $s\eta_t$.

We have the following results:

\begin{itemize}
\item DEED-GD communicates $\tilde{O}(Nd)$ bits at iteration $t\geq1$, and
\begin{eqnarray}
\|w_t-w^*\|\leq ({c'})^t\left(\max\left\{0, \|w_0-w^*\|-\frac{c\eta s}{c'-c}\right\} + \frac{c'\eta s}{c'-c}\right).\notag
\end{eqnarray}
\item DEED-SGD communicates $\tilde{O}(Nd)$ bits at iteration $t\geq1$, and
\begin{eqnarray}
\mathbb{E}\|w_t-w^*\|^2\leq ({c'})^t\left(\max\left\{0, \|w_0-w^*\|^2-\frac{c\eta^2 s}{c'-c}\right\} + \frac{c'\eta^2 s}{c'-c}\right).\notag
\end{eqnarray}
\item DEED-Fed communicates
$\tilde{O}(Nd)$ bits at iteration $t\in\{E,2E,\cdots\}$, and
\begin{eqnarray}
\mathbb{E}\|\overline{w}_{t}-w^*\|^2\leq\frac{v}{\gamma+t},\notag
\end{eqnarray}
where $v$ is some constant dependent on the Federated learning settings (e.g. full participant or not) as well as the the initial error $\|w_0-w^*\|^2$.

\end{itemize}
\end{theorem}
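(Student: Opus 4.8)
The three bullet points are all instances of the abstract recursion in Theorem~\ref{thm: general convergence}, so the plan is to verify Assumption~\ref{assumption: Ft} for each algorithm, identify the effective error sequence $\alpha_t$, and then specialize the bound \eqref{inequ_general_convergence} (using the closed forms of $C_k$ and $D_k$). First I would handle the communication-cost claim: at iteration $t$ each worker quantizes the gradient difference $g_t^i-s_{t-1}^i$ and the center quantizes $s_t-v_{t-1}$, so by Lemma~\ref{quan_vect_lemma} the number of bits is controlled by the ratio of the maximal norm of these vectors to the precision $sc'^{t+1}/2$. The key sub-step is to show inductively that $\|s_{t-1}^i-g_t^i\|$ and $\|s_t-v_{t-1}\|$ are $O(c'^{t})$ up to problem constants: since $s_{t-1}^i$ tracks $g_{t-1}^i=\nabla f_i(w_{t-1})$ with error at most the previous precision, and $\|g_t^i-g_{t-1}^i\|\le L\|w_t-w_{t-1}\|\le L\eta\|v_{t-1}\|$ with $\|v_{t-1}\|$ itself decaying geometrically (because it tracks $\nabla f(w_{t-1})\to 0$ at the linear rate of the iterates), the ratio $\varepsilon_t$ in Lemma~\ref{quan_vect_lemma} is $\Theta(1)$, giving $\tilde O(d)$ bits per worker and hence $\tilde O(Nd)$ bits total per iteration. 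The same bookkeeping applies to DEED-SGD and DEED-Fed with the appropriate precision levels.

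Next I would derive the distance bounds. For DEED-GD, set $F_t(w)=w-\eta\nabla f(w)$; with $\eta=\tfrac{2}{L+\mu}$ this is a contraction with constant $c=1-\eta\mu<1$ (standard: $\|F_t(w)-F_t(w^*)\|\le(1-\eta\mu)\|w-w^*\|$ for $L$-smooth $\mu$-strongly convex $f$), and $w^*$ is its unique fixed point. The total quantization error injected into the weight at step $t$ is the learning rate times the combined decoding error of the two quantizations, so $\alpha_t\le \eta\cdot sc'^{t+1}$ (the worker error and center error each bounded by $sc'^{t+1}/2$, and these add up at the weight level after multiplication by $\eta$; I would track the exact constant carefully). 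Plugging $c_j\equiv c$ and $\alpha_i=\eta s c'^{i+1}$ into $C_T^2=\sum_{i=0}^{T-1}\alpha_i^2 c^{2(T-1-i)}$ gives a geometric sum $\eta^2 s^2 c'^2\sum_{i=0}^{T-1}c'^{2i}c^{2(T-1-i)}$, which evaluates to $\Theta\big(c'^{2T}\big)$ with coefficient $\big(\tfrac{c'\eta s}{c'-c}\big)^2$ since $c'>c$; combining with $D_T^2\|w_0-w^*\|^2=c^{2T}\|w_0-w^*\|^2$ and taking square roots, together with a slightly sharper tracking that replaces $\|w_0-w^*\|$ by $\max\{0,\|w_0-w^*\|-\tfrac{c\eta s}{c'-c}\}$ (which comes from absorbing the first few error terms into the homogeneous part — this is the ``deterministic version'' refinement mentioned in the Remark), yields exactly the stated inequality. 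DEED-SGD is identical in structure but one works with $\mathbb{E}\|w_t-w^*\|^2$: the WGC assumption with $\eta=\tfrac{1}{\rho L}$ makes the expected progress of the SGD map contractive with factor $c=1-\eta\mu$, the unbiased quantization contributes a variance term $\eta^2\alpha_t^2$ with $\alpha_t^2=\Theta(sc'^{t+1})$ (note the precision is $\sqrt{sc'^{k+1}}/2$, so the squared error is linear in $c'^{t}$), and the same geometric-sum computation delivers the second bullet.

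For DEED-Fed the recursion is no longer a strict contraction with a fixed $c<1$: the effective map over one round of $E$ local steps has Lipschitz-type behavior degrading like $1-\mu\eta_t E+O(\eta_t^2)$ with diminishing $\eta_t=\beta/(t+\gamma)$, so I would instead run the standard FedAvg $O(1/t)$-rate argument (à la \cite{li2019convergence}) augmented with the extra quantization variance $\eta_t^2 \alpha_t^2 = \Theta(\eta_t^4 s^2)$, which is higher-order and hence harmless; one proves $\mathbb{E}\|\overline w_{t+1}-w^*\|^2\le (1-\mu\eta_t E)\mathbb{E}\|\overline w_t-w^*\|^2 + \eta_t^2 B$ for a constant $B$ collecting local-drift, sampling variance (full vs.\ partial participation), and quantization terms, and then the usual induction with the ansatz $v/(\gamma+t)$ closes provided $\beta>1/\mu$ and $\gamma$ large enough that $\eta_0\le 1/(4L)$. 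The main obstacle is the DEED-Fed case: unlike the two frequent-communication settings, it does not drop directly out of Theorem~\ref{thm: general convergence} (the Lipschitz constants are not uniformly bounded below $1$, and the hypothesis there that $\{c_i\}$ be non-decreasing for the linear-rate characterization is the wrong regime), so one has to redo the recursion by hand and, crucially, verify that the condition $\eta_t\le 2\eta_{t+E}$ is exactly what is needed to control the accumulated local drift and the round-to-round change in precision; getting the constant $v$ to depend correctly on the participation pattern and on $\|w_0-w^*\|^2$ is the fiddly part.
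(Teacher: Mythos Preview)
Your plan for DEED-GD and DEED-SGD is essentially the paper's: it also instantiates the contraction map $F_t(w)=w-\eta\nabla f(w)$, bounds the double-quantization error $\|v_k-\nabla f(w_k)\|\le s{c'}^{k+1}$ by the triangle inequality, and then controls the bit count by showing $\|g_k^i-s_{k-1}^i\|$ and $\|s_k-v_{k-1}\|$ are $\Theta({c'}^k)$ via $L$-smoothness and the already-established linear decay of $\|w_k-w^*\|$. One minor correction: the exact bound with the $\max\{0,\cdot\}$ term in the DEED-GD bullet comes from the \emph{deterministic} one-step recursion $\|w_{k+1}-w^*\|\le c\|w_k-w^*\|+\eta s {c'}^{k+1}$, not from taking square roots of the squared bound in Theorem~\ref{thm: general convergence}; your computation of $C_T^2$ followed by a square root would only give $\sqrt{D_T^2\|w_0-w^*\|^2+C_T^2}$, which is weaker than the stated affine expression.

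There is, however, a genuine gap in your DEED-Fed bit-count argument. You write that ``the same bookkeeping applies to \ldots\ DEED-Fed with the appropriate precision levels,'' but it does not. In DEED-Fed the vector being quantized at round $t$ is a weight difference $w_{(t+1)E}^k-s_{tE}^k$, and the precision is $s\eta_{tE}=\Theta(1/t)$. If you bound $\|w_{(t+1)E}^k-w_{tE}^k\|$ by the triangle inequality through $w^*$ (as you do for DEED-GD), the convergence bound only gives $O(1/\sqrt{t})$, so the ratio in Lemma~\ref{quan_vect_lemma} grows like $\sqrt{t}$ and the bit count would be $\Omega(d\log t)$, not $\tilde O(d)$. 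The paper explicitly flags this as ``the only difficulty'' and circumvents it by bounding the per-round weight change directly: $\|w_{(t+1)E}^k-w_{tE}^k\|\le\sum_{i=0}^{E-1}\eta_{tE+i}\|\nabla F_k(w_{tE+i}^k,\xi_{tE+i}^k)\|\le \eta_{tE}\sum_i\|\nabla F_k(\cdot)\|$, then squaring and invoking the bounded-gradient Assumption~\ref{fed_assump_2} to get $\mathbb{E}\|w_{(t+1)E}^k-w_{tE}^k\|^2=O(\eta_{tE}^2)=O(1/t^2)$, which matches the precision. A smaller slip in the same setting: the quantization in DEED-Fed is applied to weights, not gradients, so the added variance in the squared-distance recursion is $(s\eta_t)^2=\eta_t^2 s^2$, not $\eta_t^2\alpha_t^2=\Theta(\eta_t^4 s^2)$ as you write; it is the \emph{same} order as the $\eta_t^2 B$ term and the paper simply absorbs $s^2$ into the constant $B+C$.
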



Remark 1: Based on these results, we can easily
compute the total number of bits needed to achieve
a certain accuracy; see Table \ref{table: GD} and Table \ref{table: SGD}. 

Remark 2: Our result allows to trade-off communication time
and computation time. 
By changing the parameters $c'$ and $s$, we can find optimal choice of convergence speed and error size.

Theorem \ref{thm: three theorems} implies that to achieve $\|w_T-w^*\|\leq\varepsilon$, we need $\tilde{O}(\kappa\log\frac{1}{\varepsilon})$ iterations for DEED-GD and DEED-SGD and $O(\frac{1}{\varepsilon^2})$ iterations for DEED-Fed. These convergence rates match those
of the corresponding algorithms with infinite bandwidth. 
Due to space limitation, we eliminate the detailed definitions of some constants in Theorem \ref{thm: three theorems} and we will provide the details in the appendix.

\section{Quantization of Nesterov acceleration}\label{sec: Quantization of Nesterov acceleration}
In frequent-communication large-memory setting, we combine Nesterov's 
acceleration with our quantization scheme DEED. The accelerating algorithm is very similar to Algorithm \ref{our_gd}. The only difference between Algorithm \ref{our_gd} and this accelerated version is that we add momentum in the final update step. The full description of the algorithm is given in the appendix.

\begin{theorem}\label{our_nes_thm}
Consider solving Problem \ref{main problem} by Algorithm \textbf{A-DEED-GD} and assume each $f_i$ is $L$-smooth and $f$ is $\mu$-strongly convex .
Let the learning rate $\eta=\frac{1}{L}$, the constant $c:=\sqrt{1-\sqrt{\frac{\mu}{L}}}$ such that $c<c'<1$, and we do quantization with maximal error $s{c'}^{k+1}/2$ at every iteration $k$.
Then we have:
\begin{itemize}
    \item[(1)] $
\|x_k-x^*\|\leq\sqrt{\frac{2}{\mu}}\cdot {c'}^k\cdot\sqrt{\Delta/\gamma^{2k} + C}$, where $C=\beta_s^2+\alpha_s+\beta\sqrt{\alpha_s}-\Delta$, $\alpha_s=\frac{s^2}{L(\gamma^2-1)}+\Delta$, $\beta_s=\left(\frac{3\sqrt{\frac{2}{L}}+5\sqrt{\frac{2}{\mu}}}{c(\gamma^2-1)}\right)s\gamma$, $\gamma=c'/c>1$.
    \item[(2)] The number of bits at iteration $k\geq1$ is $O\left(Nd\right)$.
\end{itemize}
\end{theorem}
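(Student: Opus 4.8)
The plan is to fit A-DEED-GD into the contraction-plus-noise template of Theorem~\ref{thm: general convergence}. First I would set up the exact (un-quantized) accelerated iteration as a fixed-point map with the required Lipschitz constant. For $\mu$-strongly convex, $L$-smooth $f$ with $\eta=1/L$, Nesterov's method admits a potential $V_k = f(x_k)-f^* + \tfrac{\mu}{2}\|z_k-x^*\|^2$ (with $z_k$ the usual momentum sequence) satisfying $V_{k+1}\le (1-\sqrt{\mu/L})\,V_k$; equivalently, $\sqrt{V_k}$ contracts with factor exactly $c=\sqrt{1-\sqrt{\mu/L}}$, whose unique fixed point corresponds to $w^*$. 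So I would take the ``state'' in Assumption~\ref{assumption: Ft} to be (a scalar proxy for) $\sqrt{V_k}$ and the Lipschitz constant to be the constant $c_t\equiv c<1$.

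Second, I would identify the effective error $e_k$. In Algorithm~\ref{our_gd} the double encoding reconstructs the averaged gradient as $v_k$ with $\|v_k-\nabla f(\cdot)\|\le 2\cdot\tfrac{s({c'})^{k+1}}{2}=s({c'})^{k+1}$, since each of the two quantizations (worker side and center side) contributes at most $\tfrac{s({c'})^{k+1}}{2}$ and the running sums $s_k,v_k$ telescope against the true gradients. Hence the accelerated update differs from exact Nesterov only by $e_k=-\eta\,(v_k-\nabla f(\cdot))$ with $\|e_k\|\le \tfrac{s}{L}({c'})^{k+1}$. Substituting $e_k$ into the potential recursion gives $V_{k+1}\le c^2 V_k + (\text{cross term, linear in } e_k) + \|e_k\|^2$; by Cauchy--Schwarz the cross term is at most $\mathrm{const}\cdot\sqrt{V_k}\,\|e_k\|$ with a problem-dependent constant (involving factors such as $\sqrt{2/L}$ and $\sqrt{2/\mu}$, the source of $\beta_s$), and completing the square yields $\sqrt{V_{k+1}}\le c\sqrt{V_k}+\mathrm{const}\cdot\|e_k\|$, i.e.\ exactly an iteration of the form \eqref{eqn: general iterate update } with $\alpha_k \propto ({c'})^{k+1}$.

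Third, I would invoke Theorem~\ref{thm: general convergence} with $c_t\equiv c$ and $\alpha_t\propto ({c'})^{t+1}$: then $D_T^2=c^{2T}$, and $C_T^2=\sum_{i<T}\alpha_i^2 c^{2(T-1-i)}$ is a geometric series in $\gamma^2=(c'/c)^2>1$, evaluating to a term of order $({c'})^{2T}\cdot\tfrac{s^2}{L(\gamma^2-1)}$ (the $\alpha_s$ piece) plus the $\beta_s$-type contribution from the linear cross terms. Collecting these and using $\|x_k-x^*\|^2\le \tfrac{2}{\mu}V_k$ gives part~(1), with $\Delta$ the (scaled) initial potential and $C$ the asymptotic error floor $\beta_s^2+\alpha_s+\beta\sqrt{\alpha_s}-\Delta$. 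For part~(2), I would combine the bound from part~(1) with $L$-smoothness to show the two vectors quantized at step $k$, namely $g_k^i-s_{k-1}^i$ and $s_k-v_{k-1}$, each have norm $O(({c'})^{k})$ (a gradient difference at geometrically contracting iterates, plus a previous reconstruction residual of size $O(({c'})^k)$); since the requested absolute precision at step $k$ is $\tfrac{s}{2}({c'})^{k+1}$, the relative precision $\varepsilon$ is $O(c')=O(1)$, so Lemma~\ref{quan_vect_lemma} gives $O(d)$ bits per node and hence $O(Nd)$ bits per iteration for $k\ge 1$.

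The main obstacle is the first step. Exact Nesterov's operator is not a Euclidean contraction on its natural state, so one cannot simply write down a map $F_t:\mathbb{R}^d\to\mathbb{R}^d$ with Lipschitz constant $<1$; the real work is choosing the potential $V_k$ (or an equivalent weighted metric) whose contraction factor is exactly $c=\sqrt{1-\sqrt{\mu/L}}$, and then verifying that the additive quantization noise enters only through the linear cross terms in the potential recursion and, after completing the square, does not degrade that factor. This careful propagation of the perturbation through the acceleration analysis is precisely what produces the somewhat intricate constants $\alpha_s$, $\beta_s$, and $C$ in the statement.
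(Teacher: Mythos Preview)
Your high-level decomposition---bound the double-encoding error by $s(c')^{k+1}$, propagate it through a contraction analysis of the accelerated scheme, then bound bits via Lemma~\ref{quan_vect_lemma}---matches the paper exactly, and your sketch of part~(2) is essentially the paper's argument. The divergence is in how the convergence part is handled.

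The paper does \emph{not} route through Theorem~\ref{thm: general convergence}; in fact Remark~1 after Theorem~\ref{our_nes_thm} states explicitly that one ``cannot directly use theorem~\ref{thm: general convergence} due to the momentum.'' Instead the paper runs Nesterov's estimating-sequence machinery with an inexact gradient $m_k$ satisfying $\|m_k-\nabla f(y_k)\|\le c^{k+1}\ell(k)$: it builds $\phi_k(x)=\phi_k^*+\tfrac{\mu}{2}\|x-v_k\|^2$ and tracks two growing error accumulators $g(k),h(k)$ so that $f(x_k)\le \phi_k^*+h(k)c^{2k}$ and $\phi_k(x^*)\le (1-c^{2k})f^*+c^{2k}\phi_0(x^*)+g(k)c^{2k}$. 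The perturbation enters via several cross terms of the form $\ell(k)\sqrt{\Delta+g(k)+h(k)}$, leading to a coupled recursion for $i(k)=g(k)+h(k)$ which is then closed by an inductive guess $i(k)\le C\gamma^{2k}$ with $\gamma=c'/c$. The specific constants $\alpha_s,\beta_s,C$ in the statement are exactly what falls out of this induction; they are artifacts of the estimating-sequence bookkeeping, not of a squared-norm recursion.

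Your proposed route---a Lyapunov potential $V_k$ with $\sqrt{V_{k+1}}\le c\sqrt{V_k}+\mathrm{const}\cdot\|e_k\|$, then ``invoke Theorem~\ref{thm: general convergence}''---has a real gap at the invocation step. Theorem~\ref{thm: general convergence} is stated for a vector iteration $w_{t+1}=F_t(w_t)+e_t$ with \emph{zero-mean} noise $e_t$; the zero-mean property is what kills the cross term and yields $C_T^2=\sum_i \alpha_i^2\prod_j c_j^2$. Your recursion is a deterministic scalar inequality in $\sqrt{V_k}$, so the cross terms do not vanish, and the accumulated bound is $\sum_i c^{T-1-i}\alpha_i$ (linear in $\alpha$), not a sum of squares. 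You half-acknowledge this (``plus the $\beta_s$-type contribution from the linear cross terms''), but that contribution is precisely what Theorem~\ref{thm: general convergence} cannot supply. The fix is either to solve the deterministic scalar recursion directly (as the paper does for DEED-GD in Proposition~D.1), or---to recover the stated constants---to redo the estimating-sequence calculation with the inexact gradient, which is what the paper actually does. Your identification of this as ``the main obstacle'' is correct; the resolution is more involved than completing a square and citing the general theorem.
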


Theorem \ref{our_nes_thm} implies that we can improve the linear convergence rate in frequent-communication large-memory setting from $O(\kappa)$ to $O(\sqrt{\kappa})$ with acceleration trick, where $\kappa$ is the condition number of the objective function. This also provides a fewer total number of bits in communication. 

Remark 1: We separate algorithm \textbf{A-DEED-GD} out from previous three algorithms because we cannot directly use theorem \ref{thm: general convergence} due to the momentum. But the intuition and technique are very similar. Hence we put it in the \textbf{DEED} series.

Remark 2: We noticed an independent work \cite{li2020acceleration} which 
also proved an accelerated rate, but differs from our work in the following aspects. First, their encoding scheme is a non-trivial 
combination of the Nesterov's momentum and DIANA (which is why a separate paper
is written), while our combination is rather straightforward.
Second, their bound has an extra constant dependent on the communication scheme
(can be $N$ or number of encoding bits) while our bound does not.
Third, our work aims to develop a general framework, and acceleration
is just one case; while their work focused on acceleration
 in large-memory frequent-communication setting.

\section{Experiment}\label{sec: experiments}

\begin{figure}
\centering
\subfigure[Compare convergence speed]
{\label{fig:linear regression loss}\includegraphics[width=65mm]{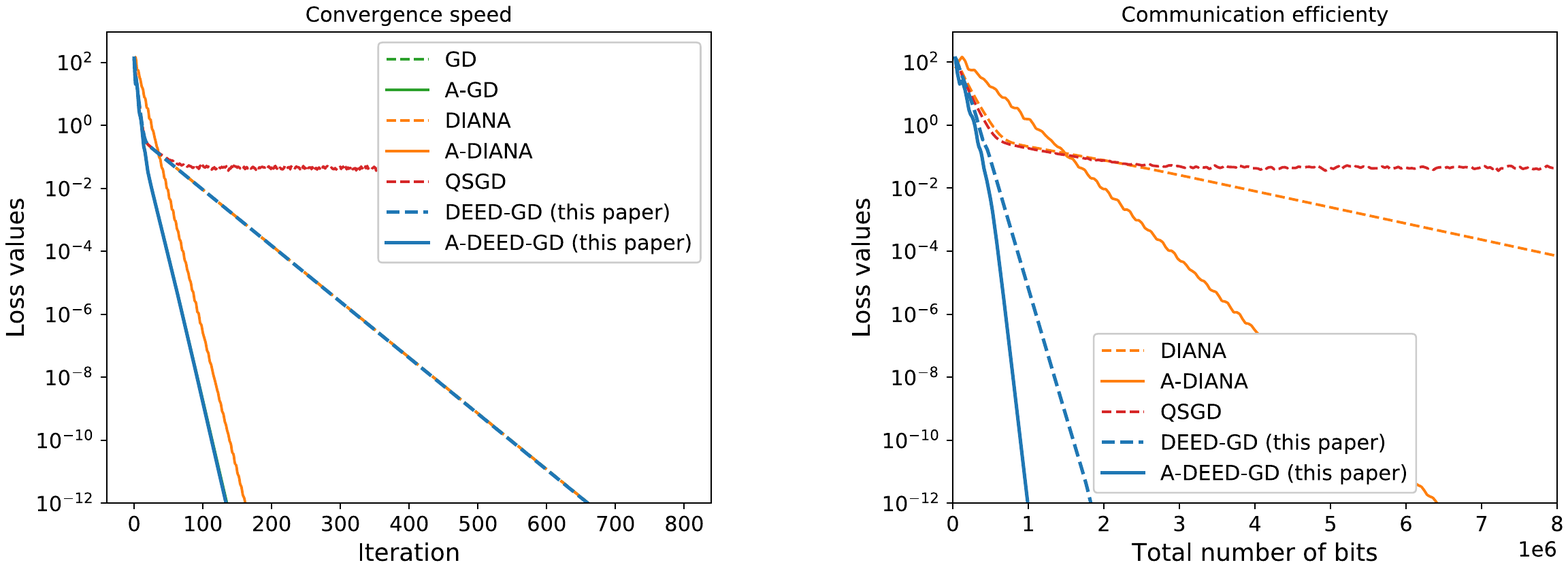}}
\subfigure[Compare communication efficiency]
{\label{fig:linear regression efficiency}\includegraphics[width=65mm]{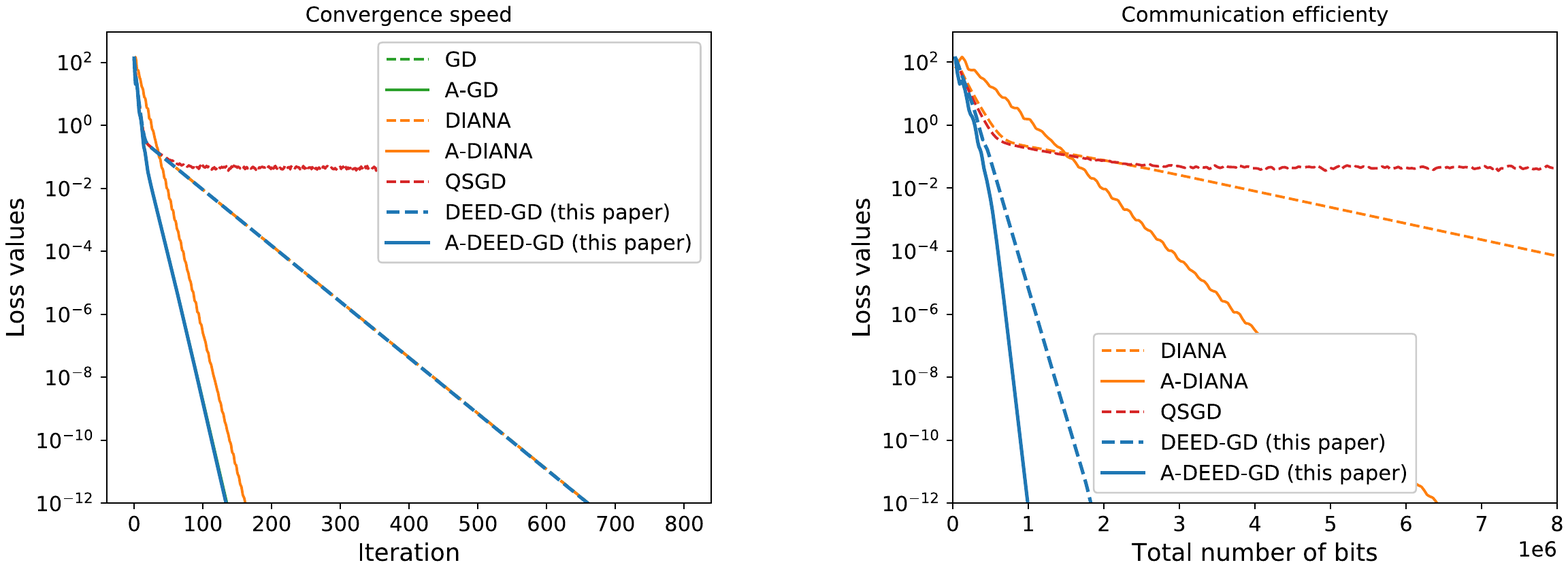}}
\caption{Results on linear regression experiments}
\end{figure}

\textbf{Linear regression}. We empirically validate our approach on linear regression problem as shown in Figure \ref{fig:linear regression loss} and Figure \ref{fig:linear regression efficiency}. 
The solid lines correspond to gradient descent type of algorithms and the dashed lines correspond to the accelerated versions. 
In Figure \ref{fig:linear regression loss}, the curves of our method coincide with the curves of the un-quantized baseline methods (GD and A-GD). 
QSGD performs the worst since it uses constant error. 
In Figure \ref{fig:linear regression efficiency}, our accelerated method achieves high-accuracy solution with the fewest number of bits, and another state-of-art algorithm takes more than 6 times of bits than ours to reach the same accuracy. Even without acceleration, our algorithm (DEED-GD) takes fewer bits than A-DIANA. Overall, our algorithms save the most number of bits in communication without scarifying the convergence speed.

\textbf{Image Classification}. We also compare our algorithm with other state-of-art algorithms (e.g. QSGD, TernGrad, DoubleSqueeze, DIANA) on image classification tasks on MNIST data set \cite{lecun1998gradient}. The results still show that our algorithms outperform others both in terms of convergence speed and communication complexity. The details of the two experiments are provided in the appendix.

\section{Conclusion}
In this paper, we provide a general convergence analysis for inexact gradient descent algorithms using absolute errors, that is tailored for quantized gradient methods.
Using this general convergence analysis, we derive a quantization scheme named DEED and 
propose algorithms for three common settings in distributed optimization:
frequent-communication large-memory, frequent-communication small-memory, and infrequent-communication (both large-memory and small-memory included). 
We also combine DEED with Nesterov's acceleration to provide an accelerated algorithm A-DEED-GD for frequent-communication large-memory, which improves the convergence rate from $O(\kappa)$ to $O(\sqrt{\kappa})$.
Our proposed algorithms converge almost as fast as their non-quantized versions and save communication in terms of bits. 
We empirically test our algorithms on linear regression problems and image classification tasks, and find that they use fewer bits
 than other algorithms. 



\newpage

\appendix

\section{Outline, Definition and Related Work}\label{Appendix: outline}
In the appendix, we first discuss some common definitions and related work. In Section \ref{Appendix: proof in general convergence analysis}, we provide the proof appeared in the general convergence analysis in Section \ref{sec: general framework}. In Section \ref{Appendix: algorithms}, we give the detailed description of the three algorithms based on our framework DEED. In Section \ref{Appendix: theorem in general quantizaiton scheme}, we provide the proof of the theorems for DEED in the three settings. In Section \ref{Appendix: experiments}, we illustrate the efficiency of our algorithms in linear regression problem and image classification tasks on MNIST dataest.

\begin{definition}
A differentiable function $ h :\mathbb{R}^d\rightarrow\mathbb{R}$ is $L$-smooth if $\enspace \forall x, y\in\mathbb{R}^d$,
\begin{eqnarray}
\left| h(y)- h(x)-\left\langle y - x, \nabla h(x)\right\rangle\right| \leq \frac{L}{2}\|x-y\|^2
\end{eqnarray}
\end{definition}

\begin{definition}
A differentiable function $f:\mathbb{R}^d\rightarrow\mathbb{R}$ is $\mu$-strongly-convex if $\enspace  \forall x, y\in\mathbb{R}^d$,
\begin{eqnarray}
f(y)-f(x)-\left\langle y - x, \nabla f(x)\right\rangle\geq\frac{\mu}{2}\|x-y\|^2
\end{eqnarray}
\end{definition}

\begin{definition}\label{condition number}
Suppose $f:\mathbb{R}^d\rightarrow\mathbb{R}, f(w):=\frac{1}{N}\sum\limits_{i=1}^N f_i(w)$, where $f_i$ is $L$-smooth $\forall i\in[N]$ and $f$ is $\mu$-strongly-convex. Then the condition number $\kappa$ of this collection of functions is
defined as $ \kappa = \frac{L}{\mu}$.
\end{definition}

\begin{assumption}\label{main assumption}
Assume that in (\ref{main problem}) $f$ is $\mu$-strongly-convex and $f_i$ is $L$-smooth $\forall i\in[N]$.
\end{assumption}


\textbf{Related work.} The study of the communication complexity in terms of bits for convex minimization of the problem \eqref{main problem} can be traced back to a classical work \cite{tsitsiklis1987communication} in 1987. 
This work focuses on the two-nodes case for frequent-communication large-memory setting, and proposed a nearly optimal algorithm using quantized gradient differences.
For multiple-nodes cases, \cite{mishchenko2019distributed} provides a linear convergence rate also using gradient differences on computing nodes.
In frequent-communication small-memory setting, 
to save more bits in communication, \cite{khirirat2018distributed, tang2019doublesqueeze} consider double encoding on gradient differences. In infrequent-communication setting, \cite{reisizadeh2019fedpaq} uses quantized gradient differences and proves sublinear convergence rate. Our work is different, as we provide convergence analysis on three settings for multiple-node cases and use doubling encoding to save more bits.

\section{Proof of Theorems for General Convergence Analysis}\label{Appendix: proof in general convergence analysis}
\begin{lemma}
Given a set $S=\left\{\left.x\in\mathbb{R}^d\right| \|w\|_2\leq M\right\}$, any (random) quantization algorithm that encoding a vector in $S$ by absolute error $\sigma$
takes at least $\left\lceil d\log_2 \frac{1}{\varepsilon}\right\rceil$ (in expectation) number of bits, where $\epsilon = \frac{\sigma}{M}$ . In addition, there exists a (random) algorithm that takes only $\left\lceil 1.05d + d\cdot\log_2 \frac{1+2\varepsilon}{\varepsilon}\right\rceil$ bits (in expectation).
\end{lemma}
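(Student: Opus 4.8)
The statement has two halves: a lower bound, which I would get from a volumetric covering argument applied separately to each random seed, and an upper bound, which I would get by exhibiting a concrete randomized quantizer — subtractive dithering on a cubic lattice — and then counting its codewords using the volume of a Euclidean ball.

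\textbf{Lower bound.} Fix a seed $\xi$. Since the absolute-error bound of Definition~\ref{def: absolute error} holds for every $\xi$ (not merely in expectation), the maps $E_\xi := E(\cdot,\xi)$ and $D$ form a deterministic scheme with $\|D(E_\xi(w))-w\|\le\sigma$ for all $w\in S$. Hence the Euclidean balls of radius $\sigma$ centered at the points $D(c)$, with $c$ ranging over the finite codeword set $E_\xi(S)$, cover $S=\{x:\|x\|_2\le M\}$. Comparing volumes, $|E_\xi(S)|\cdot\omega_d\sigma^d\ge\omega_d M^d$, where $\omega_d$ is the volume of the Euclidean unit ball, so $|E_\xi(S)|\ge(M/\sigma)^d=\varepsilon^{-d}$ and therefore $\lceil\log_2|E_\xi(S)|\rceil\ge\lceil d\log_2(1/\varepsilon)\rceil$ for every $\xi$; taking the expectation over $\xi$ gives the claimed lower bound. (Only the error bound is used here, not unbiasedness; one could squeeze more out of unbiasedness but it is not needed.)

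\textbf{Upper bound.} I would take lattice spacing $\delta:=2\sigma/\sqrt d$ and draw a shared dither $u$ uniformly from $[-\delta/2,\delta/2)^d$ using the random seed. The encoder transmits an index for the nearest lattice point $\ell:=\mathrm{round}_{\delta\mathbb Z^d}(w+u)$ of $w+u$, and the decoder outputs $\hat w:=\ell-u$. Then $\hat w-w=\mathrm{round}_{\delta\mathbb Z^d}(w+u)-(w+u)$, and by the standard subtractive-dither identity this quantity is exactly uniform on $[-\delta/2,\delta/2)^d$ and independent of $w$; hence $\mathbb E_u[\hat w]=w$ (unbiasedness) and $\|\hat w-w\|_2\le\tfrac{\sqrt d}{2}\delta=\sigma$ almost surely, so Definition~\ref{def: absolute error} is satisfied. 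For the bit count: for every $w\in S$ and every admissible $u$, $\|\ell\|_2\le\|w\|_2+\|u\|_2+\sigma\le M+2\sigma$, so every codeword lies in the ball of radius $M+2\sigma$; the disjoint Voronoi cubes of these lattice points all lie in the ball of radius $M+3\sigma$, so their number is at most $\omega_d\big((M+3\sigma)\sqrt d/(2\sigma)\big)^d$. Substituting the Stirling bound $\omega_d=\pi^{d/2}/\Gamma(d/2+1)\le(2\pi e/d)^{d/2}$, the factors of $\sqrt d$ cancel and the codeword count is at most $\big(\tfrac{\sqrt{2\pi e}}{2}\cdot\tfrac{M+3\sigma}{\sigma}\big)^d=\big(2^{1.05}(1/\varepsilon+3)\big)^d$, using $\log_2(\sqrt{2\pi e}/2)<1.05$. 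Because this bound is uniform in $u$, the expected number of bits is at most $\lceil 1.05d+d\log_2\frac{1+3\varepsilon}{\varepsilon}\rceil$; a slightly tighter accounting of which lattice points are actually reachable (exploiting the spare $1/\sqrt{\pi d}$ factor dropped from the Stirling estimate) replaces $3\varepsilon$ by $2\varepsilon$.

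\textbf{Main obstacle.} The reason the final bound carries no $\sqrt d$ — and the delicate point of the proof — is the cancellation in the last step: turning the cubic lattice's $\ell_\infty$ Voronoi radius into an $\ell_2$ error guarantee forces spacing $\Theta(\sigma/\sqrt d)$, hence $\Theta(\sqrt d)$ extra lattice points per coordinate, but the Euclidean unit ball has volume $\omega_d=\Theta(1/\sqrt d)^d$, which absorbs exactly that factor. Pinning the constant down to $1.05\approx\log_2(\sqrt{2\pi e}/2)$, rather than the looser $\log_2 3$ one gets from a crude $\sigma$-net covering bound, requires the Stirling estimate on $\Gamma(d/2+1)$; and getting the logarithmic term to read exactly $\frac{1+2\varepsilon}{\varepsilon}$ requires careful bookkeeping of the boundary lattice points. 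I expect verifying the subtractive-dither identity and this boundary accounting to be essentially the only real work — the lower bound and the remaining volume comparisons are routine.
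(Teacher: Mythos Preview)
Your lower bound is identical to the paper's. Your upper bound shares the same skeleton --- cubic lattice with spacing $2\sigma/\sqrt d$, count codewords via volume-of-ball over volume-of-cell, then Stirling on $\Gamma(d/2+1)$ to extract the $1.05d$ constant --- but differs from the paper in two places worth flagging.

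First, subtractive dithering needs the decoder to know the dither $u$, but Definition~\ref{def: absolute error} has $D:\mathbb Z^+\to\mathbb R^d$ with no access to the seed $\xi$, so your scheme does not literally fit the stated model. The paper sidesteps this by using plain coordinate-wise stochastic rounding to the vertices of the cube containing $w$: the decoder simply outputs the transmitted lattice vertex, and no shared randomness is required. (Amusingly, your dithering genuinely achieves worst-case error $\sigma$, whereas the paper's random-vertex scheme actually has worst-case error equal to the cube's full diameter $2\sigma$, so the paper's own proof is slightly loose at exactly this step; but only the randomization mechanism differs, and you can swap in the paper's mechanism with no change to the rest of your argument.)

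Second, your claim that the spare $1/\sqrt{\pi d}$ from Stirling upgrades $3\varepsilon$ to $2\varepsilon$ does not work: that factor saves only $\tfrac12\log_2(\pi d)$ bits, whereas the gap $d\log_2\frac{1+3\varepsilon}{1+2\varepsilon}$ is $\Theta(d\varepsilon)$, linear in $d$ for fixed $\varepsilon$. The paper obtains the $2\varepsilon$ directly by counting the cubes that \emph{intersect} $S$ rather than bounding Voronoi cells of reachable lattice points: any cube meeting $S$ has diameter $2\sigma$ and hence lies inside $B(0,M+2\sigma)$, giving the $(M+2\sigma)^d$ numerator without the extra $\sigma$ of slack your dither introduces. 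Neither issue affects the headline $\tilde O(d)$ conclusion.
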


\textbf{Proof sketch.} For the lower bound, we only need to prove the deterministic version since every random algorithm can be reduced to a deterministic algorithm by fixing $\xi$. Then it is equivalent to cover $S$ with small balls and proof follows. And we will use constructive method to prove the upper bound.

\begin{proof}
We first prove the lower bound. $\forall m\in \mathbf{E}(S)$, construct a ball centered at $\mathbf{D}(m)$ with radius $\sigma$. Then all these balls form a cover of $S$. Otherwise there is a vector $v\in S$ outside the cover and
\begin{eqnarray}
\left\|v - \mathbf{D}(\mathbf{E}(v))\right\|_2 \geq\min\limits_{m\in \mathbf{E}(S)}\left\|v - \mathbf{D}(m)\right\|_2>\sigma,\notag
\end{eqnarray}
which contradicts to the assumption.

Hence, the sum of the volumes of these small balls is not less than the volume of $S$. Finally, $|\mathbf{E}(S)|\geq\frac{M^d}{\sigma^d}$ follows.

On the other hand, we divide the whole space by cubes with side length $\frac{2\sigma}{\sqrt{d}}$ regularly. Then select the cubes who have non-empty intersection with $S$. Then every point in $S$ is contained in a cube, which means it can be encoded (unbiasedly) by the vertices of the cube with maximal error $\sigma$. Then these cubes must be contained in ball $B(0,M+2\sigma)$ since the diameter of the cube is $2\sigma$. In this case, the number of cubes is at most $\frac{\frac{\pi^{\frac{d}{2}}}{\Gamma\left(\frac{d}{2}+1\right)}(M+2\sigma)^d}{\left(\frac{2\sigma}{\sqrt{d}}\right)^d}$, and then the number of bits is at most $\left\lceil\log_2\frac{\pi^{\frac{d}{2}}}{\Gamma\left(\frac{d}{2}+1\right)}\left(\frac{(1+2\varepsilon)\sqrt{d}}{2\varepsilon}\right)^d\right\rceil$.

Recall Stirling's formula $\Gamma(n+1)\geq\sqrt{2\pi n}\left(\frac{n}{e}\right)^n$, we have
\begin{eqnarray}
\log_2\frac{\pi^{\frac{d}{2}}}{\Gamma\left(\frac{d}{2}+1\right)}\left(\frac{(1+2\varepsilon)\sqrt{d}}{2\varepsilon}\right)^d &\leq& \log_2\frac{\pi^{\frac{d}{2}}}{\sqrt{\pi d}\left(\frac{d}{2e}\right)^\frac{d}{2}}\left(\frac{(1+2\varepsilon)\sqrt{d}}{2\varepsilon}\right)^d\notag\\
&=& d\cdot\log_2 \frac{\sqrt{2\pi e}}{(\pi d)^{\frac{1}{2d}}}\frac{1+2\varepsilon}{2\varepsilon}\notag\\
&\leq& d\cdot\log_2 \sqrt{\frac{\pi e}{2}}\frac{1+2\varepsilon}{\varepsilon}\notag\\
&\leq&  1.05d + d\cdot\log_2 \frac{1+2\varepsilon}{\varepsilon}.\notag
\end{eqnarray}
\end{proof}

\begin{theorem}
Suppose Assumption \ref{assumption: Ft} holds and $\{w_t\}$ is a sequence generated by \begin{align}\label{eqn: general iterate update }
    w_{t+1}=F_t(w_t)+e_t,
\end{align}
for some chosen initial value $w_0$ and $e_t$ is a zero-mean random  noise depending on the (iteration) history and is bounded by $\alpha_t$. Define series $C_k^2=\sum\limits_{i=0}^{k-1}\alpha_i^2\prod\limits_{j=i+1}^{k-1}c_j^2$ and $D_k^2=\prod\limits_{i=0}^{k-1}c_i^2$.
Then we have 
\begin{eqnarray}\label{inequ_general_convergence}
\mathbb{E}\left[\|w_{T}-w^*\|^2\right]\leq D_T^2\|w_0-w^*\|^2+C_T^2.
\end{eqnarray}
In addition, there exists functions series $\{F_i\}_{i\geq0}$ and 
 noise $ \{ e_t \}_{t\geq 0} $ to make the inequality hold. Besides, if we suppose the sequence of the Lipschitz constants $\{c_i\}$ is non-decreasing, then the right hand side of (\ref{inequ_general_convergence}) converges linearly if and only if all $c_t$'s are always bounded above by a constant $c<1$ and $\alpha_t$ converges to 0 linearly.
\end{theorem}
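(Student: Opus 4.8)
\textbf{The recursive bound (\ref{inequ_general_convergence}).} The plan is to track $a_t := \mathbb{E}\big[\|w_t-w^*\|^2\big]$ and derive a one-step recursion. Conditioning on the iteration history $\mathcal{F}_t$ (which determines $w_t$ and $F_t$) and using $F_t(w^*)=w^*$, I would expand
\[
\|w_{t+1}-w^*\|^2 = \|F_t(w_t)-F_t(w^*)\|^2 + 2\langle F_t(w_t)-F_t(w^*),\, e_t\rangle + \|e_t\|^2 .
\]
Taking $\mathbb{E}[\,\cdot\mid\mathcal{F}_t]$ eliminates the cross term since $e_t$ is zero-mean given the history; the Lipschitz property gives $\|F_t(w_t)-F_t(w^*)\|^2\le c_t^2\|w_t-w^*\|^2$ and $\|e_t\|\le\alpha_t$ gives $\mathbb{E}[\|e_t\|^2\mid\mathcal{F}_t]\le\alpha_t^2$. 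Taking total expectations then yields $a_{t+1}\le c_t^2 a_t+\alpha_t^2$, and a routine induction unrolls this to $a_T\le \big(\prod_{i=0}^{T-1}c_i^2\big)a_0+\sum_{i=0}^{T-1}\alpha_i^2\prod_{j=i+1}^{T-1}c_j^2 = D_T^2\|w_0-w^*\|^2+C_T^2$.

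\textbf{Tightness.} For the claim that some $\{F_i\},\{e_t\}$ turn (\ref{inequ_general_convergence}) into an equality, I would exhibit a scalar instance: $d=1$, $w^*=0$, $F_t(w)=c_t w$ (continuous, Lipschitz constant exactly $c_t<1$, with unique fixed point $0$), and $e_t$ an independent variable taking the values $\pm\alpha_t$ each with probability $1/2$ (zero-mean, with $\|e_t\|\equiv\alpha_t$). Every inequality used in the previous step is then an equality at every iteration, so $a_T = D_T^2|w_0|^2+C_T^2$ exactly for any chosen $w_0$.

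\textbf{The linear-convergence characterization.} Assume $\{c_i\}$ is non-decreasing, so it has a limit $c_\infty\le 1$, and write $b_T:=D_T^2\|w_0-w^*\|^2+C_T^2$; I take $w_0\ne w^*$ so the first term is genuinely present (the degenerate case $w_0=w^*$ collapses $b_T$ to $C_T^2$ and only needs a one-line remark). For the ``if'' direction, suppose $c_t\le c<1$ for all $t$ and $\alpha_t\le M\rho^t$ with $\rho<1$; then $D_T^2\le c^{2T}$, and with $r:=\max\{\rho,c\}<1$ each summand of $C_T^2$ satisfies $\alpha_i^2\prod_{j=i+1}^{T-1}c_j^2\le M^2 r^{2i} r^{2(T-1-i)}=M^2 r^{2(T-1)}$, so $C_T^2\le M^2 T r^{2(T-1)}$, which is dominated by $r'^{2T}$ for any $r'\in(r,1)$; hence $b_T\to 0$ linearly. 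For the ``only if'' direction, suppose $b_T\le Kq^T$ eventually for some $q<1$. Since $b_T\ge \|w_0-w^*\|^2\prod_{i=0}^{T-1}c_i^2$ and the $c_i$ are non-decreasing, a limit $c_\infty=1$ would force $\prod_{i=0}^{T-1}c_i^2\ge \mathrm{const}\cdot(1-\varepsilon)^{2T}$ for every $\varepsilon>0$, impossible once $(1-\varepsilon)^2>q$; hence $c_\infty<1$ and $c_t\le c_\infty=:c<1$ for all $t$. Finally, retaining only the $i=T-1$ term of $C_T^2$ (whose product $\prod_{j=T}^{T-1}c_j^2$ is empty, hence $1$) gives $\alpha_{T-1}^2\le C_T^2\le b_T\le Kq^T$, so $\alpha_t\le \sqrt{Kq}\,(\sqrt q)^t$ and $\alpha_t\to 0$ linearly.

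\textbf{Anticipated main obstacle.} The recursion and the scalar tightness example should be quick; the delicate point is the ``only if'' direction, where the monotonicity of $\{c_i\}$ must be exploited to upgrade ``$\prod_{i}c_i^2$ decays geometrically'' to the uniform bound $\sup_t c_t<1$ (without monotonicity one could tolerate occasional $c_t$ near $1$), and where the precise meaning of ``converges linearly'' together with the degenerate case $w_0=w^*$ has to be pinned down so that the biconditional reads cleanly.
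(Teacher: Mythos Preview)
Your proposal is correct and follows the same overall architecture as the paper's proof: a one--step recursion $a_{t+1}\le c_t^2 a_t+\alpha_t^2$ unrolled by induction, a linear $F_t(x)=c_t x$ for tightness, and separate treatments of sufficiency and necessity for the linear--rate characterization. Two points differ in execution.

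\textbf{Tightness.} The paper works in general dimension $d$ and, given the history, picks $e_t$ to be $\pm\bar e_t$ with $\bar e_t$ a vector of norm $\alpha_t$ \emph{orthogonal} to $F_t(w_t)-w^*$; this makes $\|w_{t+1}-w^*\|^2=c_t^2\|w_t-w^*\|^2+\alpha_t^2$ hold almost surely, not merely in expectation. Your one--dimensional construction with independent $\pm\alpha_t$ also yields equality in expectation and is perfectly adequate for the stated claim; the paper's version is slightly stronger but not needed.

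\textbf{Necessity.} Here your route is genuinely simpler. The paper proves a calculus lemma that $g(x)=\frac{\ln(1/(1-x))}{x}$ is increasing on $(0,1)$, uses it to turn the product bound $\prod c_i^2\le \mathrm{const}\cdot c^T$ into an averaged sum bound $\frac{1}{T}\sum(1-c_i)\ge \mathrm{const}$, and then invokes monotonicity of $\{c_i\}$ to get a uniform upper bound on each $c_k$. You instead exploit monotonicity directly: the non--decreasing sequence $\{c_i\}$ has a limit $c_\infty\le 1$, and if $c_\infty=1$ then for every $\varepsilon>0$ the tail product dominates $(1-\varepsilon)^{2T}$, contradicting the assumed rate $q^T$ once $(1-\varepsilon)^2>q$. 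This avoids the auxiliary lemma entirely. Your extraction of $\alpha_{T-1}^2\le C_T^2\le b_T$ from the $i=T-1$ summand matches the paper exactly.
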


\begin{proof}
The inequality (\ref{inequ_general_convergence}) is straightforward.
\begin{eqnarray}
\mathbb{E}\left[\|w_{t+1}-w^*\|^2|w_{t}\right]&=&\mathbb{E}\left[\|F_{t}(w_t)+e_t-w^*\|^2|w_{T-1}\right]\notag\\
&=& \mathbb{E}\left[\|F_{t}(w_t)-w^*\|^2|w_{T-1}\right] + \alpha_t^2\notag\\
&\leq& c_t^2\|w_t-w^*\|^2+\alpha_t^2.\notag
\end{eqnarray}

Then we can prove inequality (\ref{inequ_general_convergence}) by mathematical induction. For $T=0$, we have $\|w_0-w^*\|^2\equiv\|w_0-w^*\|^2$. Suppose it holds for $T\leq k$, we have
\begin{eqnarray}
\mathbb{E}\|w_{k+1}-w^*\|^2&\leq& c_k^2\mathbb{E}\|w_k-w^*\|^2 + \alpha_k^2\notag\\
&\leq& c_k^2\left(D_k^2\|w_0-w^*\|^2+C_k^2\right)+\alpha_k^2\notag\\
&=& D_{k+1}\|w_0-w^*\|^2+C_{k+1}^2.\notag
\end{eqnarray}
The inductions succeed.

Next, we define $F_t(x):=c_t x$. Given history $\{w_0,w_2,\cdot,w_t\}$, we assign $\overline{e}_t$ as an arbitrary vector orthogonal to $F_t(w_t)-w^*$ with length $\alpha_t$. Define $e_t=\overline{e}_t$ with probability $\frac{1}{2}$, and $e_t=-\overline{e}_t$ otherwise. In this case, we always have
\begin{eqnarray}
\|w_{t+1}-w^*\|^2 = c_t^2\|w_t-w^*\|^2+\alpha_t^2,\notag
\end{eqnarray}
and 
\begin{eqnarray}
\|w_{k+1}-w^*\|^2= D_{k+1}\|w_0-w^*\|^2+C_{k+1}^2\notag
\end{eqnarray}
follows.

Finally, suppose the sequence of the Lipschitz constants $\{c_i\}$ is non-decreasing.
\begin{itemize}
    \item \textbf{Necessity}. Suppose there exists constant $C,M>0$ and $c\in(0,1)$ such that $D_T^2\|w_0-w^*\|^2+C_T^2\leq Cc^T$, $\forall T\geq M$.
    
    We firstly prove a lemma.
    \begin{lemma}
    The function $g(x):=\frac{\ln\frac{1}{1-x}}{x}$ is increasing on $(0,1)$.
    \end{lemma}
    \begin{proof}
    $\forall x\in(0,1)$, we have $g'(x)=\frac{\frac{x}{1-x}+\frac{\ln\frac{1}{1-x}}{x^2}}{x^2}>0$.
    \end{proof}
    Recall $c_t\geq c_0$, $\forall t\geq0$, we have $\frac{\ln\frac{1}{c_t}}{1-c_t}\leq\frac{\ln\frac{1}{c_0}}{1-c_0}$, i.e. $\ln\frac{1}{c_t}\leq C_0(1-c_t)$ where $C_0:=\frac{\ln\frac{1}{c_0}}{1-c_0}$. Because $D_T^2\leq\frac{C}{\|w_0-w^*\|^2}c^T$, we have $\sum\limits_{i=0}^{T-1} 2\ln\frac{1}{c_i}\geq\ln\frac{\|w_0-w^*\|^2}{C}+T\ln\frac{1}{c}$. Moreover, $2C_0\sum\limits_{i=0}^{T-1} (1-c_i)\geq\ln\frac{\|w_0-w^*\|^2}{C}+T\ln\frac{1}{c}$. This suggests $\forall k\geq0$, $\lim\limits_{T\rightarrow\infty}\frac{\sum\limits_{i=k}^{T-1} (1-c_i)}{T}\geq\frac{\ln\frac{1}{c}}{2C_0}$, and $1-c_k\geq\frac{\ln\frac{1}{c}}{2C_0}$ follows. On the other hand, $Cc^T\geq C_T^2\geq\alpha_{T-1}^2$. Hence, $c_i$ is bounded by $c:=1-\frac{\ln\frac{1}{c}}{2C_0}$ and $\alpha_i$ diminishes exponentially.
    \item \textbf{Sufficiency}. Suppose $c_i\leq c<1, \forall i\geq0$ and $\alpha_t\leq C\alpha^t,\forall t\geq M$ for constant $C>0,M\geq0$. Without loss of generality, we assume $M=0$. We only need to prove $D_k$ and $C_k$ diminish exponentially separately. It is trivial for $D_k$. For $C_k$, we have $C_k^2:=C_k^2=\sum\limits_{i=0}^{k-1}\alpha_i^2\prod\limits_{j=i+1}^{k-1}c_j^2\leq Ck\beta^k$ where $\beta=\max\{c,\alpha\}$. Then $C_k$ diminishes exponentially since $Ck\beta^k\leq\beta^{\frac{k}{2}}$ for sufficient large $k$.
\end{itemize}
\end{proof}

\section{Algorithms}\label{Appendix: algorithms}

\subsection{DEED-GD}
The pseudo-code of DEED-GD is given in Algorithm \ref{our_gd}.

\subsection{DEED-SGD}
The algorithm of DEED-SGD is given below (algorithm \ref{our_sgd}). As we promised previously, there are only two differences between DEED-GD and DEED-SGD.
\begin{itemize}
    \item In line 5, we use approximate gradient $\overline{\nabla}_i$ instead of gradient $\nabla$.
    \item In line 2, line 6 and line 11, we use different $\eta$, $c$ and $c'$. And the maximal error in quantization is changed.
\end{itemize}

\begin{algorithm}[h]
\SetAlgoLined
\begin{algorithmic}[1]
 \STATE{Initialization: Each server $i\in[N]$ holds $w_0=s_{-1}^i=v_{-1}=0$, server 0 holds $v_{-1}=0$, $k=0$\;}
 \STATE{$\eta=\frac{1}{\rho L}$, $c=1-\frac{\mu}{\rho L}$, $c<c'<1$ and $s\in\mathbb{R}_+$ is the quantization level\;}
 \WHILE{the precision is not enough}
  \FOR{$i\in[N]$} \STATE{
  server $i$ computes $g_k^i=\overline{\nabla}_i f_i(x_k)$\;}
  \STATE{server $i$ does quantization $d_k^i=Q(g_k^i - s_{k-1}^i, \sqrt{s{c'}^{k+1}}/2)$\;}
  \STATE{server $i$ updates $s_k^i = d_k^i + s_{k-1}^i$\;}
  \STATE{server $i$ send $d_k^i$ to server 0\;}
  \ENDFOR
  \STATE{server 0 computes $s_k = \frac{1}{N}\sum\limits_{i=1}^N d_k^i + s_{k-1}$\;}
  \STATE{server 0 does quantization $u_k = Q(s_k - v_{k-1}, \sqrt{s{c'}^{k+1}}/2)$\;}
  \STATE{server 0 sends $u_k$ to server $i,\forall i\in[N]$\;}
  \STATE{server 0 updates $v_k = u_k + v_{k-1}$\;}
  \FOR{$i\in[N]$} \STATE{
  server $i$ updates $v_k = u_k + v_{k-1}$\;}
  \STATE{server $i$ updates $w_{k+1}= w_k - \eta v_k$\;}
  \ENDFOR
 \STATE{$k = k + 1$\;}
 \ENDWHILE
\end{algorithmic}
\caption{Double Encoding and Error Diminishing for Stochastic Gradient Descent (DEED-SGD)}
\label{our_sgd}
\end{algorithm}

\subsection{DEED-Fed}
We first introduce the original FedAvg algorithm. Define $F(w):=\sum\limits_{i=1}^N p_i F_i(w)$ where $F_i$'s are $\mu$-convex and $L$-smooth functions defined on $\mathbb{R}^d$ and $p_i\geq0, \sum\limits_{i=1}^N p_i=1$.
In each round, say round $k\geq0$, the center server sends weight $w_{tE}$ to $N$ slave nodes, and the $k^\text{th}$ slave nodes performs $E$ local updates (for $i$ in $\{0, 1, \cdot, E-1\}$):
\begin{eqnarray}
w_{tE+i+1}^k = w_{tE+i}^k - \eta_{tE+i}\nabla F_{k}(w_{tE+i}^k, \xi_{tE+i}^k).\notag
\end{eqnarray}

Finally, in \textbf{full participant setting}, all slave nodes sends their final weights to the center server, and center server computes
\begin{eqnarray}
w_{(t+1)E}:=\sum\limits_{i=1}^N p_i w_{(t+1)E}^i.\notag
\end{eqnarray}

For \textbf{partial participant setting}, not all slave nodes stay active in each round. Here are two more detailed settings.
\begin{itemize}
    \item[1)] In setting 1, we defined a set $S_{t+1}$ of $K$ indices selected randomly with replacement from $[N]$ with probability distribution $(p_1, \cdots, p_N)$. Then the center server updates
    \begin{eqnarray}\label{partial_setting_1}
    w_{(t+1)E}:=\frac{1}{K}\sum\limits_{i\in S_{t+1}}w_{(t+1)E}^i.
    \end{eqnarray}
    \item[2)] In setting 2, we defined a set $S_{t+1}$ of $K$ indices selected evenly and randomly without replacement from $[N]$. Then the center server updates
    \begin{eqnarray}\label{partial_setting_2}
    w_{(t+1)E}:=\frac{N}{K}\sum\limits_{i\in S_{t+1}}p_i w_{(t+1)E}^i.
    \end{eqnarray}
\end{itemize}

Except the assumption of smoothness and convexity, there are two more assumptions.

\begin{assumption}\label{fed_assump_1}
Let $\xi_t^k$ be sampled from the $k^\text{th}$ device's local data uniformly at random. The
variance of stochastic gradients in each device is bounded: $\mathbb{E}\|\nabla F_k(w_t^k, \xi_t^k)-\nabla F_k(w_t^k)\|^2\leq\sigma_k^2$.
\end{assumption}

\begin{assumption}\label{fed_assump_2}
The expected squared norm of stochastic gradients is uniformly bounded, i.e. $\mathbb{E}\|\nabla F_k(w_t^k, \xi_t^k)\|^2\leq G^2$.
\end{assumption}

Based on these two assumptions, their theorems said

\begin{theorem}\label{fedavg_thm}
With the algorithm above, we have
\begin{eqnarray}
\mathbb{E}\|\overline{w}_{t+1}-w^*\|^2\leq(1-\eta_t\mu)\mathbb{E}\|\overline{w}_t-w^*\|^2+\eta_t^2(B+C),\notag
\end{eqnarray}
where $B=\sum\limits_{k=1}^N p_k^2\sigma_k^2+6L\Gamma+8(E-1)^2G^2$, and $C$ is a constant depending on different setting. In full participant setting, $C=0$. In partial participant setting 1, $C=\frac{4}{K}E^2G^2$, and in partial participant setting 2, $C=\frac{N-K}{N-1}\frac{4}{K}E^2G^2$.
\end{theorem}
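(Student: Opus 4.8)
The plan is to run a perturbed-iterate (virtual-sequence) argument, the standard route for FedAvg-type analyses. For every index $t$ --- including the $E-1$ steps inside a round where no communication happens --- introduce the virtual averages $\overline{w}_t := \sum_{i=1}^N p_i w_t^i$, $\overline{g}_t := \sum_{i=1}^N p_i \nabla F_i(w_t^i)$ and $g_t := \sum_{i=1}^N p_i \nabla F_i(w_t^i,\xi_t^i)$, so that $\overline{w}_{t+1} = \overline{w}_t - \eta_t g_t$ holds on \emph{every} step (this is exactly what the virtual sequence buys us), while $\mathbb{E}[g_t\mid\mathcal{F}_t] = \overline{g}_t$ by Assumption \ref{fed_assump_1}. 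Expanding $\|\overline{w}_{t+1}-w^*\|^2 = \|\overline{w}_t - w^* - \eta_t\overline{g}_t\|^2 + 2\eta_t\langle\overline{w}_t-w^*-\eta_t\overline{g}_t,\ \overline{g}_t-g_t\rangle + \eta_t^2\|g_t-\overline{g}_t\|^2$ and taking conditional expectation kills the cross term; the last term is at most $\eta_t^2\sum_k p_k^2\sigma_k^2$ by independence of the $\xi_t^i$ across devices and Assumption \ref{fed_assump_1}. So it remains to bound $\mathbb{E}\|\overline{w}_t-w^*-\eta_t\overline{g}_t\|^2$.

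For that I would expand it as $\|\overline{w}_t-w^*\|^2 - 2\eta_t\langle\overline{w}_t-w^*,\overline{g}_t\rangle + \eta_t^2\|\overline{g}_t\|^2$ and estimate the last two terms device-by-device. Split $\langle\overline{w}_t-w^*,\nabla F_i(w_t^i)\rangle = \langle\overline{w}_t-w_t^i,\nabla F_i(w_t^i)\rangle + \langle w_t^i-w^*,\nabla F_i(w_t^i)\rangle$; bound the first by $L$-smoothness plus Young's inequality, which introduces the \emph{client-drift} quantity $\|\overline{w}_t-w_t^i\|^2$, and the second by $\mu$-strong convexity, $\langle w_t^i-w^*,\nabla F_i(w_t^i)\rangle \ge F_i(w_t^i)-F_i(w^*)+\tfrac{\mu}{2}\|w_t^i-w^*\|^2$. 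For $\|\overline{g}_t\|^2$ use convexity of $\|\cdot\|^2$ and $L$-smoothness to get $\|\overline{g}_t\|^2\le 2L\sum_i p_i(F_i(w_t^i)-F_i^*)$ with $F_i^* := \min F_i$. Collecting, the function-value pieces combine into $-2\eta_t(1-2L\eta_t)\sum_i p_i(F_i(w_t^i)-F^*)$ plus the heterogeneity residual $\Gamma := F^* - \sum_i p_i F_i^* \ge 0$; since $\eta_t\le\tfrac{1}{4L}$ the first of these is nonpositive and is dropped, leaving $(1-\mu\eta_t)\mathbb{E}\|\overline{w}_t-w^*\|^2 + 6L\eta_t^2\Gamma + (\mathrm{const})\,\eta_t\sum_i p_i\mathbb{E}\|\overline{w}_t-w_t^i\|^2$. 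The drift is then handled by the usual lemma: if $t_0\le t$ is the last synchronization step then $w_{t_0}^i=\overline{w}_{t_0}$ for all $i$ and $t-t_0\le E-1$, so summing the local updates (whose squared norms are $\le G^2$ by Assumption \ref{fed_assump_2}) and using $\eta_t\le 2\eta_{t_0}$ gives $\sum_i p_i\mathbb{E}\|\overline{w}_t-w_t^i\|^2\le 4\eta_t^2(E-1)^2G^2$. Substituting yields $\mathbb{E}\|\overline{w}_{t+1}-w^*\|^2\le(1-\mu\eta_t)\mathbb{E}\|\overline{w}_t-w^*\|^2+\eta_t^2 B$ with $B=\sum_k p_k^2\sigma_k^2+6L\Gamma+8(E-1)^2G^2$, i.e. the full-participation case $C=0$.

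For the partial-participation settings the only change is at synchronization steps $t\in\{E,2E,\dots\}$, where the aggregated iterate formed by \eqref{partial_setting_1} or \eqref{partial_setting_2} is, by design of the respective sampling rule, an unbiased estimator of the full average $\sum_i p_i w_t^i$. So I would add to the recursion the extra variance $\mathbb{E}\big\|\overline{w}_t - \sum_i p_i w_t^i\big\|^2$, which (since all clients entered the round from the common point $w_{t-E}$, so $\|w_t^i-w_{t-E}\|\le E\eta_t G$) evaluates to $\tfrac{4}{K}E^2G^2\eta_t^2$ for sampling with replacement (setting 1) and to $\tfrac{N-K}{N-1}\cdot\tfrac{4}{K}E^2G^2\eta_t^2$ for sampling without replacement (setting 2), the factor $\tfrac{N-K}{N-1}$ being the finite-population correction in the variance of a sample mean. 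Adding this term to $B$ gives the stated $C$ in each case.

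The step I expect to be the real work is the second paragraph: peeling the cross term device-by-device so that the heterogeneity constant comes out \emph{exactly} as $\Gamma$ and the per-step function-value decrement has the correct sign --- this is what pins down the requirement $\eta_0\le\tfrac{1}{4L}$ --- together with the drift lemma, where one must be careful that $\eta$ varies within a round and only the bound $\eta_t\le 2\eta_{t+E}$ (equivalently $\eta_t\le 2\eta_{t_0}$) is available. The partial-participation variance computation is conceptually routine but is where the precise constants $\tfrac{4}{K}$ and $\tfrac{N-K}{N-1}$ are fixed.
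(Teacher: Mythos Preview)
Your proposal is correct and follows the standard perturbed-iterate (virtual-sequence) route. Note, however, that the present paper does \emph{not} actually prove this statement: it is quoted verbatim from \cite{li2019convergence} (see the line ``Recall the theorem with no quantization in \cite{li2019convergence}'' in the DEED-Fed proof sketch), and is used here only as a black box to which the quantization error $s^2\eta_t^2$ is appended. Your sketch is essentially the argument of \cite{li2019convergence} itself --- the one-step expansion, the device-by-device use of strong convexity and smoothness to extract $\Gamma$, the drift lemma via Assumption~\ref{fed_assump_2}, and the sampling-variance computation for the two partial-participation schemes --- so you have reconstructed the proof of the cited result rather than anything the paper supplies.

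One small slip: in the drift lemma you need $\eta_{t_0}\le 2\eta_t$ (the earlier stepsize is at most twice the later one, which is what $\eta_t\le 2\eta_{t+E}$ gives for a decreasing schedule), not $\eta_t\le 2\eta_{t_0}$ as you wrote; the latter is trivially true and does not produce the factor $4$ in $4\eta_t^2(E-1)^2G^2$.
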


The algorithm \textbf{DEED-Fed} is a simple combination of DEED-GD and Federated Averaging algorithms. Algorithm \ref{our_fed} is the pseudo-code of fully-participant setting.

\begin{algorithm}[ht]
\caption{Double Encoding and Error Diminishing Federated Averaging (DEED-Fed)}
\begin{algorithmic}[1]
\STATE Initialization: Each server $i\in[N]$ holds $w_0=s_{-E}^i=v_{-E}=0$, server 0 holds $v_{-E}=0$, $k=0$;
\STATE Hyper-parameters:  $\eta_k\in\mathbb{R}_+$, parameter $s \in \mathbb{R}_+ $;
\WHILE{the precision is not enough}
    \FOR{$i\in[N]$}
        \STATE  $w_{k+1}^i = w_k^i - \eta_k\nabla f_i(w_k^i,\xi_k^i)$\;
        \STATE $k = k + 1$\;
    \ENDFOR
    \IF{$E|k$}
        \FOR{$i\in[N]$}
            \STATE   server $i$ does quantization $d_k^i=Q(w_k^i - s_{k-E}^i, \frac{s\eta_k}{2})$\;
            \STATE   server $i$ updates $s_k^i = d_k^i + s_{k-E}^i$\;
            \STATE  server $i$ send $d_k^i$ to server 0\;
        \ENDFOR
        \STATE   server 0 computes $s_k = \sum\limits_{i=1}^N p_id_k^i + s_{k-E}$\;
        \STATE   server 0 does quantization $u_k = Q(s_k - v_{k-E}, \frac{s\eta_k}{2})$\;
        \STATE   server 0 sends $u_k$ to server $i,\forall i\in[N]$\;
        \STATE   server 0 updates $v_k = u_k + v_{k-E}$\;
        \FOR{$i\in[N]$}
            \STATE   server $i$ updates $v_k = u_k + v_{k-E}$\;
            \STATE   server $i$ updates $w_{k}= v_k$\;
        \ENDFOR
    \ENDIF
\ENDWHILE
\end{algorithmic}
\label{our_fed}
\end{algorithm}

To change algorithm \ref{our_fed} into partial participant versions, we only need to replace $[N]$ by set $S$ in line 9 and change the summation in line 14 into (\ref{partial_setting_1}) or (\ref{partial_setting_2}) correspondingly.

\section{Theorems for DEED in Three Settings}\label{Appendix: theorem in general quantizaiton scheme}
In this section, we restate the theorem \ref{thm: three theorems} separately and give proof separately, too.

\subsection{Proof for DEED-GD}
\begin{theorem}
In algorithm \ref{our_gd}, we choose the learning rate $\eta_t\equiv\eta=\frac{2}{L+\mu}$, $c:=1-\eta\mu$, $c<c'<1$, and the maximal error at iteration $t$ is $s{c'}^{t+1}/2$ where $s$ is the quantization level. Then algorithm \ref{our_gd} communicates $\tilde{O}(Nd)$ bits at iteration $t\geq1$, and
\begin{eqnarray}\label{our_gd_convergence}
\|w_t-w^*\|\leq ({c'})^t\left(\max\left\{0, \|w_0-w^*\|-\frac{c\eta s}{c'-c}\right\} + \frac{c'\eta s}{c'-c}\right).
\end{eqnarray}
\end{theorem}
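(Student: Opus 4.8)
The plan is to recognize Algorithm~\ref{our_gd} as the abstract iteration of Theorem~\ref{thm: general convergence} with contraction map $F_k(w)=w-\eta\nabla f(w)$, to identify the effective noise $e_k$, to unroll the resulting scalar recursion into \eqref{our_gd_convergence}, and finally to count bits via Lemma~\ref{quan_vect_lemma}. First I would do the ``error diminishing'' bookkeeping. By induction $s_k=\frac1N\sum_i s_k^i$, and since $s_k^i=d_k^i+s_{k-1}^i$ with $d_k^i=Q\big(g_k^i-s_{k-1}^i,\tfrac{s(c')^{k+1}}{2}\big)$ and $Q(\cdot,\sigma)$ has absolute error $\le\sigma$, we get $\|s_k^i-\nabla f_i(w_k)\|=\|d_k^i-(g_k^i-s_{k-1}^i)\|\le\tfrac{s(c')^{k+1}}{2}$ --- crucially the \emph{current} precision, not a growing partial sum, because the quantizer is re-centered at the previous estimate. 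Averaging gives $\|s_k-\nabla f(w_k)\|\le\tfrac{s(c')^{k+1}}{2}$; the broadcast step $v_k=u_k+v_{k-1}$, $u_k=Q(s_k-v_{k-1},\tfrac{s(c')^{k+1}}{2})$, adds another $\tfrac{s(c')^{k+1}}{2}$, so $\|v_k-\nabla f(w_k)\|\le s(c')^{k+1}$. Since $w_{k+1}=w_k-\eta v_k$, this reads $w_{k+1}=F_k(w_k)+e_k$ with $e_k=-\eta\big(v_k-\nabla f(w_k)\big)$ and $\alpha_k:=\|e_k\|\le\eta s(c')^{k+1}$.

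Second, I would verify Assumption~\ref{assumption: Ft} and unroll. Under Assumption~\ref{main assumption}, $f=\frac1N\sum f_i$ is $L$-smooth and $\mu$-strongly convex, so for $\eta=\frac{2}{L+\mu}$ the map $F_k$ is a contraction with constant $\max\{|1-\eta\mu|,|1-\eta L|\}=\frac{L-\mu}{L+\mu}=1-\eta\mu=c<1$ and fixed point $w^*$. Iterating $\|w_{k+1}-w^*\|\le c\|w_k-w^*\|+\alpha_k$ (the deterministic form of Theorem~\ref{thm: general convergence}) yields $\|w_t-w^*\|\le c^t\|w_0-w^*\|+\eta s\sum_{i=0}^{t-1}c^{t-1-i}(c')^{i+1}$, and evaluating the geometric sum $\sum_{i=0}^{t-1}c^{t-1-i}(c')^{i+1}=c'\frac{(c')^t-c^t}{c'-c}$ gives
\[
\|w_t-w^*\|\ \le\ c^t\Big(\|w_0-w^*\|-\tfrac{\eta s c'}{c'-c}\Big)+(c')^t\,\tfrac{\eta s c'}{c'-c}.
\]
To reach \eqref{our_gd_convergence} it remains to check $c^t\big(\|w_0-w^*\|-\tfrac{\eta sc'}{c'-c}\big)\le(c')^t\max\{0,\|w_0-w^*\|-\tfrac{c\eta s}{c'-c}\}$: if $\|w_0-w^*\|\ge\tfrac{\eta sc'}{c'-c}$ use $c^t\le(c')^t$ together with $\tfrac{c'}{c'-c}\ge\tfrac{c}{c'-c}$; otherwise the left side is negative and the bound is trivial.

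Third, for the bit count at iteration $t\ge1$: each server quantizes $g_t^i-s_{t-1}^i$ (server~$0$ quantizes $s_t-v_{t-1}$), and I would bound its norm by $L$-smoothness, $\|g_t^i-s_{t-1}^i\|\le\|g_t^i-g_{t-1}^i\|+\|g_{t-1}^i-s_{t-1}^i\|\le L\eta\|v_{t-1}\|+\tfrac{s(c')^t}{2}$ with $\|v_{t-1}\|\le L\|w_{t-1}-w^*\|+s(c')^t$; plugging in the already-proved bound $\|w_{t-1}-w^*\|\le(c')^{t-1}R$ (with $R$ the bracket in \eqref{our_gd_convergence}) gives $\|g_t^i-s_{t-1}^i\|\le M(c')^{t-1}$ for an explicit constant $M$, and likewise for $s_t-v_{t-1}$. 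Hence the target absolute error $\tfrac{s(c')^{t+1}}{2}$ corresponds to the \emph{relative} error $\varepsilon=\tfrac{s(c')^2}{2M}$, independent of $t$, so by Lemma~\ref{quan_vect_lemma} each such vector costs $\big\lceil 1.05d+d\log_2\tfrac{1+2\varepsilon}{\varepsilon}\big\rceil=\tilde{O}(d)$ bits; summing over the $N$ uplinks and the one broadcast gives $\tilde{O}(Nd)$ per iteration.

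The main obstacle is the dependency ordering in the third step: the bit bound needs the a~priori geometric decay of $\|w_{t-1}-w^*\|$, so the convergence estimate must be established first, and one must also be careful in Step~1 that the telescoping running sums really keep the gradient-tracking error pinned at the current precision rather than accumulating over iterations. The contraction estimate and the geometric-series algebra are routine. (The restriction $t\ge1$ is essential: at $t=0$ the quantized vector is $g_0^i=\nabla f_i(0)$, whose norm is not controlled by $s$, so iteration~$0$ may require more bits.)
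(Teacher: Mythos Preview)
Your proposal is correct and follows essentially the same route as the paper's proof: the paper also first shows by induction that $s_k=\tfrac1N\sum_i s_k^i$ and hence $\|v_k-\nabla f(w_k)\|\le s(c')^{k+1}$, then applies the deterministic contraction recursion $D(x_{k+1})\le cD(x_k)+\eta s(c')^{k+1}$ (stated as a separate proposition) and unrolls the geometric sum to reach \eqref{our_gd_convergence}, and finally bounds $\|g_k^i-s_{k-1}^i\|$ and $\|s_k-v_{k-1}\|$ via the same chain $L\eta\|v_{k-1}\|\le L\eta\|g_{k-1}\|+L\eta s(c')^k\le L^2\eta\|w_{k-1}-w^*\|+\cdots$ before invoking Lemma~\ref{quan_vect_lemma}. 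Your observations about the dependency ordering (convergence first, then bit count) and the $t\ge1$ caveat are exactly the points the paper relies on implicitly.
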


\textbf{Proof sketch.} First of all, according to mathematical induction and triangle inequality, we see that the effective at iteration $t$ is bounded by $\eta\cdot s{c'}^{t+1}$. Besides, consider the function $F_t(w)\equiv F(w):=w-\eta\nabla f(w)$. Because $f$ is $L$-smooth and $\mu$-convex, we know that $F$ is $\left(1-\eta\mu\right)$-Lipschitz with fixed point $w^*$. Then, according to our framework, we can prove that $\|w_t-w^*\|$ converges at speed $c'$. This is exactly (\ref{our_gd_convergence}).

To bound the number of bits produced by each communication, we only need to prove that $\|g_t^i-s_{t-1}^i\|=\Theta({c'}^t)$ and $\|s_t-v_{t-1}\|=\Theta({c'}^t)$ $\forall i\in[N]$ because of lemma \ref{quan_vect_lemma}.

Notice that these two norms are all close to $L\|w_t-w_{t-1}\|$, and $\|w_t-w_{t-1}\|$ can be bounded by $\|w_t-w^*\|+\|w^*-w_{t-1}\|$ which are also $\Theta({c'}^t)$ by (\ref{our_gd_convergence}) and everything's done. In our real proof, we will bound these terms carefully to give a tighter bound.

\begin{proof}
First of all, we can give a deterministic version of theorem \ref{thm: general convergence} as we promised below theorem \ref{thm: general convergence}.
\begin{proposition}\label{thm_general_deter}
Suppose $F:\mathbb{R}^d\rightarrow\mathbb{R}^d$ is a continuous function with Lipschitz constant $c<1$. Define a sequence $\{e_i\}_{i\geq1}$ in $\mathbb{R}^d$ satisfies $\|e_i\|\leq \eta s {c'}^{i}$ where $c<c'<1$. Then $\forall x_0\in \mathbb{R}^d$, the sequence constructed by $x_{t+1} = F(x_t) + e_{t+1}$ satisfies $D(x_t)\leq{c'}^t\left(\max\left\{0, D(x_0)-\frac{c\eta s}{c'-c}\right\} + \frac{c'\eta s}{c'-c}\right)$ where $D(w):=\|w-w^*\|$ and $w^*$ is the fixed point of $F$.
\end{proposition}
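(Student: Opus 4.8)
The plan is to reduce the statement to a single scalar recursion and then close it by a one-line induction. First I would observe that since $F$ is a contraction on the complete space $\mathbb{R}^d$ with modulus $c<1$, the Banach fixed point theorem already supplies the unique $w^*$ with $F(w^*)=w^*$, so $D(\cdot)=\|\cdot-w^*\|$ is well defined; this is the only place where continuity and completeness enter.

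The key step is the one-step estimate: for every $t\ge 0$,
\begin{equation*}
D(x_{t+1})=\|F(x_t)+e_{t+1}-w^*\|\le\|F(x_t)-F(w^*)\|+\|e_{t+1}\|\le c\,D(x_t)+\eta s\,{c'}^{t+1},
\end{equation*}
using the triangle inequality, the Lipschitz bound on $F$, the identity $F(w^*)=w^*$, and the hypothesis $\|e_{t+1}\|\le\eta s\,{c'}^{t+1}$. From here I would prove by induction on $t$ that $D(x_t)\le {c'}^{t}R$, where $R:=\max\{0,\,D(x_0)-\tfrac{c\eta s}{c'-c}\}+\tfrac{c'\eta s}{c'-c}$; this is exactly the claimed bound. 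The base case $t=0$ is a two-case check: if $D(x_0)\ge\tfrac{c\eta s}{c'-c}$ then $R=D(x_0)+\eta s\ge D(x_0)$, and otherwise $R=\tfrac{c'\eta s}{c'-c}>\tfrac{c\eta s}{c'-c}>D(x_0)$. For the inductive step, substituting $D(x_t)\le{c'}^{t}R$ into the one-step estimate yields $D(x_{t+1})\le{c'}^{t}(cR+c'\eta s)$, so it suffices to verify $cR+c'\eta s\le c'R$, i.e. $R\ge\tfrac{c'\eta s}{c'-c}$, which is immediate because the $\max\{0,\cdot\}$ term is nonnegative. This proves the proposition; the bound on the number of bits in the DEED-GD theorem then follows by feeding ${c'}^{t}$ decay of $D(x_t)$, together with $L$-smoothness, into Lemma~\ref{quan_vect_lemma}.

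I do not expect a genuine obstacle here; the only point requiring care is to guess the correct closed form $R$, in particular the $\max\{0,\cdot\}$ clipping, which is precisely what makes $R\ge\tfrac{c'\eta s}{c'-c}$ hold for free and therefore lets the induction close without any extra hypothesis on $D(x_0)$. This is the deterministic mirror of the computation behind Theorem~\ref{thm: general convergence}: there the induction runs on $\mathbb{E}\|w_t-w^*\|^2$ with $c_t^2$ and $\alpha_t^2$ in place of $c$ and $\eta s\,{c'}^{t}$ and yields only the looser bound $D_T^2\|w_0-w^*\|^2+C_T^2$, whereas here the geometric shape of the error bound lets us sum the perturbations exactly and absorb everything into the single factor ${c'}^{t}$.
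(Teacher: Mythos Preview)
Your proof is correct. Both you and the paper start from the identical one-step estimate $D(x_{t+1})\le c\,D(x_t)+\eta s\,{c'}^{t+1}$, but you organize the remainder differently: the paper divides through by $c^k$, telescopes, and sums the resulting geometric series $\eta s\sum_{i=1}^{k}(c'/c)^i$ to \emph{derive} the closed form $D(x_k)\le c^k\bigl(D(x_0)-\tfrac{c\eta s}{c'-c}\bigr)+{c'}^k\tfrac{c'\eta s}{c'-c}$, and only then relaxes $c^k$ to ${c'}^k$ on the positive part to obtain the $\max\{0,\cdot\}$ expression. You instead take $R$ as given and verify $D(x_t)\le{c'}^tR$ by a direct induction, the inductive step collapsing to the inequality $R\ge\tfrac{c'\eta s}{c'-c}$, which the $\max\{0,\cdot\}$ clipping makes automatic. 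Your route is shorter and cleanly explains \emph{why} the clipping appears; the paper's route has the minor advantage that it produces the bound rather than requiring it as an ansatz, and in fact yields the slightly sharper intermediate inequality $D(x_k)\le c^k\bigl(D(x_0)-\tfrac{c\eta s}{c'-c}\bigr)+{c'}^k\tfrac{c'\eta s}{c'-c}$ before relaxation.
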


\begin{proof}
According to definition, we have the following inequalities.
\begin{eqnarray}
D(x_{k+1})&=&D(F(x_k)+e_{k+1})\notag\\
&\leq &D(F(x_k)) + \|e_{k+1}\|\notag\\
&\leq& c\cdot D(x_k) + \eta s {c'}^{k+1}.\notag
\end{eqnarray}

Hence,
\begin{eqnarray}
\frac{D(x_k)}{c^k}&\leq&\frac{D(x_{k-1})}{c^{k-1}}+\eta s\left(\frac{c'}{c}\right)^k\notag\\
&\leq&D(x_0)+\eta s\sum\limits_{i=1}^k\left(\frac{c'}{c}\right)^i\notag\\
&=&D(x_0)+\eta s\frac{\left(\frac{c'}{c}\right)^{k+1}-1}{\frac{c'}{c}-1}\notag
\end{eqnarray}

Finally, we have $D(x_k)\leq c^k\left(D(x_0)-\frac{c\eta s}{c'-c}\right)+{c'}^{k}\left(\frac{c'\eta s}{c'-c}\right)$ which is bounded by ${c'}^k\left(\max\left\{0, D(x_0)-\frac{c\eta s}{c'-c}\right\} + \frac{c'\eta s}{c'-c}\right)$.
\end{proof}

Then we can prove the inequality (\ref{our_gd_convergence}). Notice $\nabla F=I-\eta\nabla^2 f$ where $\mu I\preceq \nabla^2 f\preceq LI$, we see $F$ is $c:=1-\eta\mu$ Lipschitz. By proposition \ref{thm_general_deter}, we only need to prove $\|v_k-g_k\|\leq s{c'}^{k+1}$ where $g_k=\nabla f(x_k)$. By induction, we have $s_k-\frac{1}{N}\sum\limits_{i=1}^N s_k^i=\frac{1}{N}\sum\limits_{i=1}^N d_k^i + s_{k-1} - \frac{1}{N}\sum\limits_{i=1}^N (d_k^i + s_{k-1}^i) = s_{k-1}-\frac{1}{N}\sum\limits_{i=1}^N s_{k-1}^i = s_{-1}-\frac{1}{N}\sum\limits_{i=1}^N s_{-1}^i=0$. Then,
\begin{eqnarray}
\|v_k - g_k\|&=&\|u_k + v_{k-1} - g_k\|\notag\\
&\leq& \|u_k - (s_k - v_{k-1})\| + \|s_k - g_k\|\notag\\
&\leq& \|u_k - (s_k - v_{k-1})\| + \left\|\frac{1}{N}\sum\limits_{i=1}^N (d_k^i + s_{k-1}^i - g_k^i)\right\|\notag\\
&\leq& \|u_k - (s_k - v_{k-1})\| + \frac{1}{N}\sum\limits_{i=1}^N \left\|d_k^i + s_{k-1}^i - g_k^i \right\|\notag\\
&\leq& s{c'}^{k+1}.\notag
\end{eqnarray}

For convenience, we define constant $\xi_s = \max\left\{0, \|x_0-x^*\|-\frac{c\eta s}{c'-c}\right\} + \frac{c'\eta s}{c'-c}$. The convergence result comes directly from proposition \ref{thm_general_deter}.

To bound the number of bits, we only need to calculate the maximal norm of the vector we need to encode. Actually, $\forall i\in[N]$, $\forall k\geq 1$ we have
\begin{eqnarray}
\|g_k^i - s_{k-1}^i\| &\leq& \|g_k^i - g_{k-1}^i\| +\left\|g_{k-1}^i-s_{k-1}^i\right\|\notag\\
&\leq& L\|x_k-x_{k-1}\| + \frac{s {c'}^{k}}{2}\notag\\
&\leq& L\eta\|v_{k-1}\| + \frac{s {c'}^{k}}{2}\notag\\
&\leq& L\eta\|g_{k-1}\| + L\eta s{c'}^k + \frac{s {c'}^{k}}{2}\notag\\
&\leq& L^2\eta\|x_{k-1} - x^*\| + L\eta s{c'}^k + \frac{s {c'}^{k}}{2}\notag\\
&\leq& L^2\eta {c'}^{k-1}\xi_s + L\eta s{c'}^k + \frac{s {c'}^{k}}{2}\notag\\
&=& \left(\frac{L^2\eta\xi_s+L\eta sc'+\frac{sc'}{2}}{{c'}^2}\right){c'}^{k+1}.\notag
\end{eqnarray}

Similarly, we have
\begin{eqnarray}
\|s_k-v_{k-1}\|&\leq& \|s_k-g_k\|+\|g_k - g_{k-1}\|+\|g_{k-1}-v_{k-1}\|\notag\\
&\leq& L\|x_k-x_{k-1}\| + \frac{3s {c'}^{k}}{2}\notag\\
&\leq& \left(\frac{L^2\eta\xi_s+L\eta sc'+\frac{3sc'}{2}}{{c'}^2}\right){c'}^{k+1}.\notag
\end{eqnarray}

In this case, the error fraction (length divided by error, i.e. inverse of relative error) is bounded by $\zeta_s:=\frac{2L^2\eta\frac{\xi_s}{s}+2L\eta c'+3c'}{{c'}^2}$, and the number of bits is at most $\left(1.05+\log_2(\zeta_s+2)\right)d$ by lemma \ref{quan_vect_lemma}.
\end{proof}

\subsection{Proof for DEED-SGD}
\begin{theorem}
In algorithm \ref{our_sgd}, we assume (Weak Growth Condition) WGC is satisfied for approximate gradient $\overline{\nabla}_i$ for every $f_i$ with parameter $\rho$. We choose the learning rate $\eta_t\equiv\eta=\frac{1}{\rho L}$, $c:=1-\eta\mu$, $c<c'<1$, and the maximal error at iteration $t$ is $\sqrt{s{c'}^{k+1}}/2$ and the error is unbiased.

DEED-SGD communicates $\tilde{O}(Nd)$ bits at iteration $t\geq1$, and
\begin{eqnarray}\label{our_sgd_convergence}
\mathbb{E}\|w_t-w^*\|^2\leq ({c'})^t\left(\max\left\{0, \|w_0-w^*\|^2-\frac{c\eta^2 s}{c'-c}\right\} + \frac{c'\eta^2 s}{c'-c}\right).
\end{eqnarray}
\end{theorem}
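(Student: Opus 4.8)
The plan is to transport the DEED-GD argument into the stochastic setting, with the one essential change that we track the \emph{squared} distance $a_k:=\mathbb{E}\|w_k-w^*\|^2$ rather than the distance itself, since the Weak Growth Condition is a second-moment statement and the stochastic gradients are not deterministically bounded. First I would unroll the two unbiased quantizations exactly as in the DEED-GD proof. Writing $g_k^i:=\overline{\nabla}_i f_i(w_k)$, $\bar g_k:=\frac1N\sum_i g_k^i$, and letting $\mathcal F_k$ be the history through iteration $k$, the accumulator invariant $s_k=\frac1N\sum_i s_k^i$ holds by induction, so $v_k=\bar g_k+q_k$ where $q_k$ is a sum of worker- and center-side quantization errors, each zero-mean and bounded by $\sqrt{s{c'}^{k+1}}/2$; hence $\mathbb{E}[q_k\mid\{g_k^i\}_i,\mathcal F_k]=0$ and $\|q_k\|\le\sqrt{s{c'}^{k+1}}$. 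Consequently $w_{k+1}=w_k-\eta(\bar g_k+q_k)$ and $\mathbb{E}[v_k\mid\mathcal F_k]=\nabla f(w_k)$.

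Expanding $\|w_{k+1}-w^*\|^2$, the cross terms $\langle\nabla f(w_k),q_k\rangle$ and $\langle\bar g_k,q_k\rangle$ vanish in conditional expectation, leaving $\mathbb{E}[\|w_{k+1}-w^*\|^2\mid\mathcal F_k]=\|w_k-w^*\|^2-2\eta\langle\nabla f(w_k),w_k-w^*\rangle+\eta^2\mathbb{E}[\|\bar g_k\|^2\mid\mathcal F_k]+\eta^2\mathbb{E}[\|q_k\|^2\mid\mathcal F_k]$. Now I would invoke $\mu$-strong convexity, $\langle\nabla f(w_k),w_k-w^*\rangle\ge f(w_k)-f(w^*)+\frac\mu2\|w_k-w^*\|^2$; convexity of $\|\cdot\|^2$ together with WGC, $\mathbb{E}[\|\bar g_k\|^2\mid\mathcal F_k]\le\frac1N\sum_i\mathbb{E}[\|g_k^i\|^2\mid\mathcal F_k]\le 2\rho L(f(w_k)-f(w^*))$; and $\|q_k\|^2\le s{c'}^{k+1}$, to obtain $\mathbb{E}[\|w_{k+1}-w^*\|^2\mid\mathcal F_k]\le(1-\eta\mu)\|w_k-w^*\|^2-2\eta(1-\eta\rho L)(f(w_k)-f(w^*))+\eta^2 s{c'}^{k+1}$. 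The choice $\eta=1/(\rho L)$ annihilates the middle term exactly, so taking total expectation gives the affine recursion $a_{k+1}\le c\,a_k+\eta^2 s{c'}^{k+1}$ with $c=1-\eta\mu\in(0,1)$.

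This recursion is identical in form to the one in Proposition \ref{thm_general_deter}, with $D(x_k)$ replaced by $a_k$ and $\eta s$ by $\eta^2 s$; dividing by $c^k$, summing the geometric series in $c'/c>1$, and using $c^k\le{c'}^k$ yields $a_k\le c^k\bigl(a_0-\tfrac{c\eta^2 s}{c'-c}\bigr)+{c'}^k\tfrac{c'\eta^2 s}{c'-c}\le{c'}^k\bigl(\max\{0,a_0-\tfrac{c\eta^2 s}{c'-c}\}+\tfrac{c'\eta^2 s}{c'-c}\bigr)$, which is precisely \eqref{our_sgd_convergence}. For the communication bound, as in DEED-GD it suffices to compare the (expected) squared norms of the encoded vectors $g_k^i-s_{k-1}^i$ and $s_k-v_{k-1}$ against the quantization level $\sqrt{s{c'}^{k+1}}/2$. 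Using $L$-smoothness, $\|g_k^i-g_{k-1}^i\|\le L\eta\|v_{k-1}\|$, the step-$(k-1)$ error bounds, and---\emph{crucially}---the fact that WGC at $w^*$ forces $\overline{\nabla}_i f_i(w^*)=0$ almost surely (so that $\mathbb{E}\|g_k^i\|^2\le 2N\rho L\,\mathbb{E}[f(w_k)-f(w^*)]\le N\rho L^2 a_k=O({c'}^k)$), one shows both encoded vectors have second moment $O({c'}^k)$, i.e. $L^2$-norm $\Theta({c'}^{k/2})$, matching the error scale $\Theta({c'}^{k/2})$. Hence the relative error is $O(1)$, and by Lemma \ref{quan_vect_lemma} together with concavity of $\log$ the expected number of bits per quantization is $O(d)$, giving $\tilde O(Nd)$ bits at each iteration $t\ge1$.

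The main obstacle is the stochastic-gradient noise: unlike DEED-GD, the per-step perturbation is not deterministically bounded, so Theorem \ref{thm: general convergence} does not apply verbatim, and one must use WGC to bound $\mathbb{E}\|\bar g_k\|^2$ by the function gap and then exploit the stepsize $\eta=1/(\rho L)$ to absorb it entirely into the contraction---this is exactly what preserves the un-quantized SGD rate $c=1-\eta\mu$. A secondary subtlety is the bit count, where only expected norms are available, so the estimate must be pushed through concavity of $\log$, and where the interpolation consequence $\overline{\nabla}_i f_i(w^*)=0$ is what keeps the encoded vectors shrinking at the rate that makes the relative error $O(1)$.
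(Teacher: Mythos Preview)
Your proposal follows essentially the same route as the paper: derive the one-step recursion $\mathbb{E}\|w_{k+1}-w^*\|^2\le c\,\mathbb{E}\|w_k-w^*\|^2+\eta^2 s{c'}^{k+1}$ (the paper delegates this to the cited lemma from \cite{vaswani2018fast}, you write it out via strong convexity and WGC), then solve it exactly as in Proposition~\ref{thm_general_deter} with squared distances in place of distances; for the bit count, both you and the paper reduce to showing $\mathbb{E}\|g_k^i-s_{k-1}^i\|^2,\ \mathbb{E}\|s_k-v_{k-1}\|^2=O({c'}^k)$ and then push the expectation through $\log$ by concavity.

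One small correction on the bit-count argument: the bound $\|g_k^i-g_{k-1}^i\|\le L\eta\|v_{k-1}\|$ that you carry over from DEED-GD does \emph{not} hold here, since $g_k^i=\overline{\nabla}_i f_i(w_k)$ and $g_{k-1}^i=\overline{\nabla}_i f_i(w_{k-1})$ use independent stochastic seeds and $L$-smoothness only controls the \emph{true} gradient differences. The argument you flag as ``crucial''---WGC at $w^*$ forces $\overline{\nabla}_i f_i(w^*)=0$ a.s., hence $\mathbb{E}\|g_k^i\|^2\le 2N\rho L\,\mathbb{E}[f(w_k)-f^*]=O({c'}^k)$, and then $\mathbb{E}\|g_k^i-s_{k-1}^i\|^2\le 2\mathbb{E}\|g_k^i\|^2+2\mathbb{E}\|s_{k-1}^i\|^2=O({c'}^k)$---is the one that actually works, so drop the smoothness step and rely on that alone. (The paper's own proof also invokes $L$-smoothness at this point and is likewise imprecise; your interpolation argument is the correct fix.)
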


\textbf{Proof sketch.} First of all, we introduce theorem 5 in \cite{vaswani2018fast} to show that with WGC, stochastic gradient descent converges linearly.

\begin{lemma}\label{sgd_lemma}
Suppose $f:\mathbb{R}^d\rightarrow\mathbb{R}$ is $\mu$-strongly-convex. Besides, $f(x)=\frac{1}{N}\sum\limits_{i=1}^N f_i(x)$, where each $f_i$ is $L$-smooth. We assume WGC is satisfied for approximate gradient $\overline{\nabla}_i$ with parameter $\rho$. Then series $\{x_i\}_{i\geq0}$ generated by iteration formula
\begin{eqnarray}
x_{k+1}:=x_k-\frac{1}{N}\sum\limits_{i=1}^N \overline{\nabla}_i f_i(x)
\end{eqnarray}
satisfy
\begin{eqnarray}
\mathbb{E}\left[\|x_{k+1}-x^*\|^2|x_k\right]\leq\left(1-\frac{\mu}{\rho L}\right)\|x_k-x^*\|^2.\notag
\end{eqnarray}
\end{lemma}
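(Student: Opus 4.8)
The plan is to run the standard one-step contraction argument for constant-stepsize SGD, specialized to $\eta=\frac{1}{\rho L}$ (absorbing the stepsize into the update, $x_{k+1}=x_k-\eta g_k$ with $g_k:=\frac{1}{N}\sum_{i=1}^N\overline{\nabla}_i f_i(x_k)$). First I would extract the two facts needed from the WGC assumption: part (1) gives, after taking the conditional expectation over the independent random seeds, $\mathbb{E}[g_k\mid x_k]=\frac{1}{N}\sum_i\nabla f_i(x_k)=\nabla f(x_k)$; and part (2), combined with convexity of $\|\cdot\|^2$ (Jensen: $\|g_k\|^2=\|\frac1N\sum_i\overline{\nabla}_i f_i(x_k)\|^2\le\frac1N\sum_i\|\overline{\nabla}_i f_i(x_k)\|^2$), gives $\mathbb{E}[\|g_k\|^2\mid x_k]\le 2\rho L\big(f(x_k)-f(x^*)\big)$. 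Note that $L$-smoothness of the $f_i$'s enters only implicitly, through the WGC parameter $\rho$.

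Then I would expand the squared distance and take the conditional expectation:
\begin{eqnarray*}
\mathbb{E}\big[\|x_{k+1}-x^*\|^2\,\big|\,x_k\big]=\|x_k-x^*\|^2-2\eta\langle\nabla f(x_k),x_k-x^*\rangle+\eta^2\,\mathbb{E}\big[\|g_k\|^2\,\big|\,x_k\big].
\end{eqnarray*}
Substituting the second-moment bound and using $\eta=\frac{1}{\rho L}$ turns the last term into exactly $2\eta\big(f(x_k)-f(x^*)\big)$. For the cross term I would apply $\mu$-strong convexity of $f$ at the pair $(x_k,x^*)$ together with $\nabla f(x^*)=0$, i.e. $\langle\nabla f(x_k),x_k-x^*\rangle\ge f(x_k)-f(x^*)+\frac{\mu}{2}\|x_k-x^*\|^2$. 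The two $f(x_k)-f(x^*)$ contributions cancel, leaving $\mathbb{E}[\|x_{k+1}-x^*\|^2\mid x_k]\le(1-\eta\mu)\|x_k-x^*\|^2=\big(1-\frac{\mu}{\rho L}\big)\|x_k-x^*\|^2$, which is the claim. This is exactly Theorem~5 of \cite{vaswani2018fast}; I would cite it and include the short computation for self-containedness.

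There is no serious obstacle. The two points deserving care are: (i) the Jensen step, which is what allows WGC — stated per-component $f_i$ rather than for the aggregated estimator $g_k$ — to be invoked; and (ii) the stepsize value is forced by the exact cancellation: for a general $\eta\le\frac{1}{\rho L}$ one is left with the extra nonpositive term $\big(2\eta^2\rho L-2\eta\big)\big(f(x_k)-f(x^*)\big)$, which is simply discarded to keep the stated inequality, whereas $\eta>\frac{1}{\rho L}$ would destroy the cancellation. Since DEED-SGD uses precisely $\eta=\frac{1}{\rho L}$, it suffices to present only this case.
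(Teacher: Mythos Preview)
Your argument is correct and is precisely the standard one-step contraction computation behind Theorem~5 of \cite{vaswani2018fast}. The paper itself does not prove this lemma at all: it simply introduces it as ``theorem 5 in \cite{vaswani2018fast}'' and uses it as a black box in the DEED-SGD analysis. So your proposal actually goes further than the paper by supplying the short self-contained derivation; your observation that the stepsize $\eta=\frac{1}{\rho L}$ (missing from the displayed iteration formula, evidently a typo) is forced by the exact cancellation of the $f(x_k)-f(x^*)$ terms is the right reading, and your remark on the Jensen step bridging the per-$f_i$ WGC bound to the aggregated estimator $g_k$ is a point the paper leaves implicit.
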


Being similar to the proof in previous section, we can bound the effective error by $\eta\sqrt{s {c'}^{k+1}}$. Then we have
\begin{eqnarray}
\mathbb{E}\|x_{k+1}-x_k\|^2\leq c\mathbb{E}\|x_k-x^*\|^2 + \eta^2 s{c'}^{k+1}.\notag
\end{eqnarray}

Then we can put it in our framework. By using the same technique in proposition \ref{thm_general_deter}, we have inequality (\ref{our_sgd_convergence}). Finally, we can bound the number of bits by using lemma \ref{quan_vect_lemma}.

\begin{proof}
As we showed in sketch, we have proved inequality (\ref{our_sgd_convergence}). We only need to prove that $\mathbb{E}\|g_k^i-s_{k-1}^i\|^2$ and $\mathbb{E}\|s_k-v_{k-1}\|^2$ are $O({c'}^k)$, $\forall i\in[N]$, because $\ln{x^2}=2\ln{x}$ is a concave function. By triangle inequality and the definition of $L$-smooth, we only need to prove $\mathbb{E}\|w_k-w_{k-1}\|^2\leq 2\mathbb{E}\|w_k-w^*\|^2 + 2\mathbb{E}\|w^*-w_{k-1}\|^2$ is $O({c'}^k)$, which is obvious by inequality ($\ref{our_sgd_convergence}$).
\end{proof}

\subsection{Proof for DEED-Fed}
\begin{theorem}
In algorithm \ref{our_fed}, we choose the learning rate $\eta_t:=\frac{\beta}{t+\gamma}$ for some $\beta>\frac{1}{\mu}$, $\gamma>1$ such that $\eta_0\leq\frac{1}{4L}$ and $\eta_t\leq 2\eta_{t+E}$. Let the maximal error at iteration $t\in\{0,E,2E,\cdots\}$ be $s\eta_t$. Then DEED-Fed communicates 
$\tilde{O}(Nd)$ bits at iteration $t\in\{E,2E,\cdots\}$, and
\begin{eqnarray}\label{our_fed_convergence}
\mathbb{E}\|\overline{w}_{t}-w^*\|^2\leq\frac{v}{\gamma+t},
\end{eqnarray}
where $v:=\max\left\{\frac{\beta^2(B+C+s^2)}{\beta\mu-1}, \gamma\|w_0-w^*\|^2\right\}$. Here, $B=\sum\limits_{k=1}^N p_k^2\sigma_k^2+6L\Gamma+8(E-1)^2G^2$, and $C$ is a constant depending on different setting. In full participant setting, $C=0$. In partial participant setting 1, $C=\frac{4}{K}E^2G^2$, and in partial participant setting 1, $C=\frac{N-K}{N-1}\frac{4}{K}E^2G^2$.
\end{theorem}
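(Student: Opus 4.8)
The plan is to view DEED-Fed as ordinary FedAvg perturbed by one extra, tightly-controlled source of noise — the double quantization of the averaged model at each communication round — feed the resulting one-step inequality into the classical $O(1/t)$-rate induction, and obtain the bit count from Lemma~\ref{quan_vect_lemma}. Write $\overline{w}_t:=\sum_i p_i w_t^i$ for the virtual average. First I would bound the gap between $\overline{w}_t$ and the vector $v_t$ actually broadcast at a communication iteration $t\in\{E,2E,\dots\}$. Exactly as in the DEED-GD proof, the quantity $s_t-\sum_i p_i s_t^i$ is constant in $t$ and vanishes at initialization, so $s_t=\sum_i p_i s_t^i$; combined with the two $Q(\cdot,\tfrac{s\eta_t}{2})$ steps of Algorithm~\ref{our_fed} and the triangle inequality this gives $\|v_t-\overline{w}_t\|\le\|v_t-s_t\|+\sum_i p_i\|s_t^i-w_t^i\|\le s\eta_t$, while unbiasedness (Definition~\ref{def: absolute error}) gives $\mathbb{E}[v_t-\overline{w}_t\mid\text{history}]=0$, with this fresh noise independent of the local SGD noise.

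Next I would substitute $v_t=\overline{w}_t+\xi_t$ into the FedAvg one-step bound of Theorem~\ref{fedavg_thm} and expand the square; the cross term vanishes in conditional expectation, so the substitution adds at most $s^2\eta_t^2$ at a communication iteration and nothing otherwise, and after aligning learning-rate indices with $\eta_t\le\eta_{t-1}$ one gets the single recursion
\[
\mathbb{E}\|\overline{w}_{t+1}-w^*\|^2\le(1-\eta_t\mu)\,\mathbb{E}\|\overline{w}_t-w^*\|^2+\eta_t^2\bigl(B+C+s^2\bigr),\qquad t\ge 0,
\]
with the same $B$ and the same three values of $C$ (full / partial-1 / partial-2) as in Theorem~\ref{fedavg_thm}, since the partial-participation sampling noise already sits inside $C$ and is independent of the quantization noise. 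With $\eta_t=\beta/(t+\gamma)$, $\beta>1/\mu$, $\gamma>1$, I would then prove $\mathbb{E}\|\overline{w}_t-w^*\|^2\le v/(\gamma+t)$ by induction: the base case $t=0$ is the clause $v\ge\gamma\|w_0-w^*\|^2$, and in the step the right side is bounded by $\tfrac{t+\gamma-1}{(t+\gamma)^2}v-\tfrac{\beta\mu-1}{(t+\gamma)^2}v+\tfrac{\beta^2(B+C+s^2)}{(t+\gamma)^2}$, where the last two terms are non-positive because $v\ge\beta^2(B+C+s^2)/(\beta\mu-1)$, and $\tfrac{t+\gamma-1}{(t+\gamma)^2}\le\tfrac{1}{t+\gamma+1}$; this closes the induction and yields \eqref{our_fed_convergence}. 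The hypotheses $\eta_0\le\tfrac{1}{4L}$ and Assumptions~\ref{fed_assump_1}--\ref{fed_assump_2} enter only through Theorem~\ref{fedavg_thm}, and $\beta\mu>1$ is what makes $v$ finite and positive.

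For the bit count it suffices, by Lemma~\ref{quan_vect_lemma}, to bound the norm of each quantized vector divided by its target error $\tfrac{s\eta_t}{2}$ by an absolute constant. A worker's message $w_t^i-s_{t-E}^i$ splits, via the triangle inequality, into the previous round's residual $\|w_{t-E}^i-s_{t-E}^i\|\le\tfrac{s\eta_{t-E}}{2}\le s\eta_t$ (here the step-size regularity $\eta_{t-E}\le 2\eta_t$ is used) and the displacement $\|w_t^i-w_{t-E}^i\|$, a sum of $E$ local steps each of size $\le\eta_{t-E}G\le 2\eta_tG$ by Assumption~\ref{fed_assump_2}, hence $\le 2E\eta_tG$; the center's message $s_t-v_{t-E}$ is handled identically using $\|s_t-\overline{w}_t\|\le\tfrac{s\eta_t}{2}$ and $\|\overline{w}_t-\overline{w}_{t-E}\|\le 2E\eta_tG$. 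Thus every relative error is $\Theta\!\bigl(s/(EG+s)\bigr)$, an absolute constant, so Lemma~\ref{quan_vect_lemma} charges $O\!\bigl(d\log(2+EG/s)\bigr)=\tilde{O}(d)$ bits per message and $\tilde{O}(Nd)$ over the $2N$ up- and down-messages of a communication iteration.

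I expect the main obstacle to be the bookkeeping behind the first two paragraphs: one must confirm that $\xi_t$ is genuinely conditionally mean-zero and independent of the SGD noise so that the squared norm decomposes with no surviving cross term, and that the \emph{per-iteration} recursion of Theorem~\ref{fedavg_thm} still chains correctly when each round restarts from the \emph{quantized} average $v_{tE}$ rather than from $\overline{w}_{tE}$; the condition $\eta_t\le 2\eta_{t+E}$ is precisely what lets the round-$t$ quantization magnitude $s\eta_{tE}$ be absorbed into the budgets of later steps and into the bit bound. A minor technical point is that Assumption~\ref{fed_assump_2} bounds the gradient norm only in expectation, so the norm estimate feeding Lemma~\ref{quan_vect_lemma} should be read in expectation (or after a truncation of the quantization domain), which is enough because Definition~\ref{def: absolute error} counts bits in expectation anyway.
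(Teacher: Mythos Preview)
Your convergence argument is essentially the paper's: you feed the FedAvg one-step recursion of Theorem~\ref{fedavg_thm} an extra $s^2\eta_t^2$ coming from the unbiased, bounded double-quantization noise, and then run the standard $O(1/t)$ induction with $\eta_t=\beta/(t+\gamma)$. The paper does exactly this, including your remark that charging $s^2$ at every $t$ (rather than only at multiples of $E$) is an acceptable over-count.

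For the bit count you take a genuinely shorter route than the paper. The paper explicitly observes that bounding the round-to-round displacement via $w^*$ fails (since $\|\overline{w}_t-w^*\|$ is only $O(1/\sqrt{t})$ while the target precision is $\Theta(\eta_t)=O(1/t)$), and then controls $\mathbb{E}\|w_{(t+1)E}^k-w_{tE}^k\|^2$ by a chain through $L$-smoothness, the local-drift estimate $\mathbb{E}\|w_{tE+i}^k-\overline{w}_{tE+i}\|^2\le\eta_{tE}^2E^2G^2$, and the just-proved convergence bound $\mathbb{E}\|\overline{w}_t-w^*\|^2\le v/(\gamma+t)$. You bypass this entirely by invoking Assumption~\ref{fed_assump_2} directly: each local step has (expected) size at most $\eta_{t-E}G$, so the whole displacement is $O(E\eta_t G)=O(\eta_t)$, already matching the precision $s\eta_t/2$. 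This is cleaner, avoids appealing to the convergence result inside the bit argument, and yields the same $\tilde{O}(Nd)$ conclusion; the paper's detour buys nothing here given the uniform gradient-norm assumption.

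One bookkeeping slip to fix: in your decomposition of the worker message $w_t^i-s_{t-E}^i$, the symbol $w_{t-E}^i$ is used once to mean the local iterate \emph{before} reset (so that $\|w_{t-E}^i-s_{t-E}^i\|\le\tfrac{s\eta_{t-E}}{2}$ holds) and once to mean the value \emph{after} reset, namely $v_{t-E}$ (so that $w_t^i-w_{t-E}^i$ is ``a sum of $E$ local steps''). These are different points in Algorithm~\ref{our_fed}; the missing piece is the reset jump $\|w_{t-E}^{i,\text{old}}-v_{t-E}\|$. But this is the deviation of node $i$'s pre-reset iterate from the broadcast average, and the same telescoping-step argument applied to the \emph{previous} round bounds it by $O(E\eta_{t-2E}G)+O(s\eta_{t-E})=O(\eta_t(EG+s))$ using $\eta_{t-E}\le 2\eta_t$ once more. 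So the fix costs only a constant factor and your conclusion stands.
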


\textbf{Proof sketch.} Recall the theorem with no quantization in \cite{li2019convergence}.

\begin{theorem}\label{fedavg_thm}
Assume assumption \ref{fed_assump_1} and \ref{fed_assump_2} hold. For FedAvg, we have
\begin{eqnarray}\label{fed_convergence}
\mathbb{E}\|\overline{w}_{t+1}-w^*\|^2\leq(1-\eta_t\mu)\mathbb{E}\|\overline{w}_t-w^*\|^2+\eta_t^2(B+C),
\end{eqnarray}
where $B=\sum\limits_{k=1}^N p_k^2\sigma_k^2+6L\Gamma+8(E-1)^2G^2$, and $C$ is a constant depending on different setting. In full participant setting, $C=0$. In partial participant setting 1, $C=\frac{4}{K}E^2G^2$, and in partial participant setting 1, $C=\frac{N-K}{N-1}\frac{4}{K}E^2G^2$.

Moreover, with inequality (\ref{fed_convergence}), we have $\mathbb{E}\|\overline{w}_{t}-w^*\|^2\leq\frac{v}{\gamma+t}$
where $v:=\max\left\{\frac{\beta^2(B+C)}{\beta\mu-1}, \gamma\|w_0-w^*\|^2\right\}$.
\end{theorem}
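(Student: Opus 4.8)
The plan is to follow the virtual-sequence analysis of \cite{li2019convergence}, since Theorem \ref{fedavg_thm} is exactly their result restated. The first task is to set up the \emph{virtual averaged iterate} $\overline{w}_t:=\sum_{k=1}^N p_k w_t^k$, defined at \emph{every} step $t$ (not only at synchronization steps), together with $\overline{g}_t:=\sum_k p_k\nabla F_k(w_t^k)$ and $g_t:=\sum_k p_k\nabla F_k(w_t^k,\xi_t^k)$. Since $\mathbb{E}[g_t]=\overline{g}_t$, the virtual sequence obeys $\overline{w}_{t+1}=\overline{w}_t-\eta_t g_t$. I would begin by expanding $\|\overline{w}_{t+1}-w^*\|^2$ around the full-gradient step $\overline{w}_t-\eta_t\overline{g}_t$; taking conditional expectation kills the cross term and produces a variance contribution $\eta_t^2\mathbb{E}\|g_t-\overline{g}_t\|^2$, which Assumption \ref{fed_assump_1} together with independence of the $\xi_t^k$ across devices bounds by $\eta_t^2\sum_k p_k^2\sigma_k^2$.

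The core of the one-step recursion is to bound the remaining deterministic term $\|\overline{w}_t-w^*-\eta_t\overline{g}_t\|^2$. First I would expand it and split the inner product $-2\eta_t\langle\overline{w}_t-w^*,\overline{g}_t\rangle$ over devices, writing $\overline{w}_t-w^*=(\overline{w}_t-w_t^k)+(w_t^k-w^*)$. The $(w_t^k-w^*)$ piece is handled by $\mu$-strong convexity of $F_k$, giving $-\langle w_t^k-w^*,\nabla F_k(w_t^k)\rangle\le -(F_k(w_t^k)-F_k(w^*))-\tfrac{\mu}{2}\|w_t^k-w^*\|^2$; the drift piece $(\overline{w}_t-w_t^k)$ is absorbed by AM--GM into $\tfrac{1}{\eta_t}\|\overline{w}_t-w_t^k\|^2+L\eta_t(F_k(w_t^k)-F_k^*)$ via $L$-smoothness, and the leftover $\eta_t^2\|\overline{g}_t\|^2$ is controlled by $\|\nabla F_k(w_t^k)\|^2\le 2L(F_k(w_t^k)-F_k^*)$. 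Collecting terms and invoking the step-size constraint $\eta_t L\le\tfrac14$ makes the function-value differences combine into the heterogeneity gap $\Gamma:=F^*-\sum_k p_k F_k^*$, leaving the intermediate bound $\mathbb{E}\|\overline{w}_{t+1}-w^*\|^2\le(1-\mu\eta_t)\mathbb{E}\|\overline{w}_t-w^*\|^2+\eta_t^2\sum_k p_k^2\sigma_k^2+6L\eta_t^2\Gamma+2\,\mathbb{E}\sum_k p_k\|\overline{w}_t-w_t^k\|^2$.

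The step I expect to be the main obstacle is bounding the \emph{client-drift} term $\mathbb{E}\sum_k p_k\|\overline{w}_t-w_t^k\|^2$ and, in the partial-participation case, the \emph{sampling variance}. For the drift, I would use that $\overline{w}_{t_0}=w_{t_0}^k$ at the most recent synchronization time $t_0$ with $t-t_0\le E-1$, so $\overline{w}_t-w_t^k$ accumulates at most $E-1$ stochastic steps; Assumption \ref{fed_assump_2} ($\mathbb{E}\|\nabla F_k(w,\xi)\|^2\le G^2$) together with the near-constancy of step sizes within a round (guaranteed by $\eta_t\le2\eta_{t+E}$) yields $\mathbb{E}\sum_k p_k\|\overline{w}_t-w_t^k\|^2\le 4\eta_t^2(E-1)^2G^2$. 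Substituting gives the stated $B=\sum_k p_k^2\sigma_k^2+6L\Gamma+8(E-1)^2G^2$ with $C=0$ in the full-participation setting. For partial participation I would show the aggregation is unbiased ($\mathbb{E}_{S_{t+1}}[\overline{w}_{t+1}]$ equals the full average) and bound the extra sampling variance by $\eta_t^2 C$, obtaining $C=\tfrac{4}{K}E^2G^2$ for sampling with replacement and $C=\tfrac{N-K}{N-1}\tfrac{4}{K}E^2G^2$ without replacement, the factor $\tfrac{N-K}{N-1}$ being the finite-population correction.

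Finally, to pass from the recursion $\Delta_{t+1}\le(1-\mu\eta_t)\Delta_t+\eta_t^2(B+C)$ (with $\Delta_t:=\mathbb{E}\|\overline{w}_t-w^*\|^2$) to the $O(1/t)$ rate, I would prove $\Delta_t\le\frac{v}{\gamma+t}$ by induction on $t$. With $\eta_t=\frac{\beta}{t+\gamma}$, substituting the inductive hypothesis gives $\Delta_{t+1}\le\frac{(t+\gamma-1)v}{(t+\gamma)^2}-\frac{(\beta\mu-1)v-\beta^2(B+C)}{(t+\gamma)^2}$; the first fraction is $\le\frac{v}{t+\gamma+1}$ because $(t+\gamma-1)(t+\gamma+1)\le(t+\gamma)^2$, and the choice $v\ge\frac{\beta^2(B+C)}{\beta\mu-1}$ (well-defined since $\beta>\tfrac1\mu$ forces $\beta\mu>1$) makes the second term nonpositive, closing the step. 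The base case $t=0$ is precisely $\|w_0-w^*\|^2\le v/\gamma$, which is the other branch of the maximum defining $v$. This establishes both halves of Theorem \ref{fedavg_thm}.
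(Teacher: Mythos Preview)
Your proposal is correct and follows precisely the virtual-sequence analysis of \cite{li2019convergence}. Note, however, that the paper does not actually prove Theorem \ref{fedavg_thm}: it merely \emph{recalls} it verbatim from \cite{li2019convergence} as a black-box ingredient in the DEED-Fed analysis, so your detailed outline already exceeds what the paper itself provides while matching the original source's argument exactly.
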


Combine this theorem error analysis, we have\footnote{We can prove better inequality since the quantization is only done on iteration $E,2E,\cdots$. However, this is enough since it won't change the convergence rate.}
\begin{eqnarray}
\mathbb{E}\|\overline{w}_{t+1}-w^*\|^2\leq(1-\eta_t\mu)\mathbb{E}\|\overline{w}_t-w^*\|^2+\eta_t^2(B+C+s^2).\notag
\end{eqnarray}
We can already put the map from $\overline{w}_t$ to $\overline{w}_{t+1}$ into our framework, and prove that it converges sublinearly. Actually, we can just use theorem \ref{fed_convergence} and conclude that inequality (\ref{our_fed_convergence}) holds. The only difficulty is the bound for communication. We cannot bound $\|\overline{w}_t-\overline{w}_{t+E}\|$ by $\|\overline{w}_t-w^*\|+\|w^*-\overline{w}_{t+E}\|$ since it is $O(1/\sqrt{t})$, while the precision is $O(1/t)$. Please see this part in the proof below.

\begin{proof}
We have proved inequality (\ref{our_fed_convergence}). The only thing left is to prove that $\mathbb{E}\|w_{(t+1)E}^k-w_{tE}^k\|^2=O(1/t^2)$.

Notice that for each larger iteration $t\geq0$ and slave node $k$, we have
\begin{eqnarray}
\|w_{(t+1)E}^k-w_{tE}^k\|&\leq&\sum\limits_{i=0}^{E-1}\|w_{tE+i+1}^k-w_{tE+i}^k\|\notag\\
&\leq& \sum\limits_{i=0}^{E-1}\eta_{tE+i}\|\nabla F_k(w_{tE+i}^k, \xi_{tE+i}^k)\|\notag\\
&\leq& \eta_{tE}\sum\limits_{i=0}^{E-1}\|\nabla F_k(w_{tE+i}^k, \xi_{tE+i}^k)\|.\notag
\end{eqnarray}

Hence,
\begin{eqnarray}
\mathbb{E}\|w_{(t+1)E}^k-w_{tE}^k\|^2&\leq& \eta_{tE}^2\cdot E\sum\limits_{i=0}^{E-1}\left(\mathbb{E}\|\nabla F_k(w_{tE+i}^k)\|^2+\sigma_k^2\right)\notag\\
&\leq& \eta_{tE}^2\cdot E\sum\limits_{i=0}^{E-1}\left(L^2\mathbb{E}\|w_{tE+i}^k-w^*\|^2 +\sigma_k^2\right)\notag\\
&\leq& \eta_{tE}^2\cdot E\sum\limits_{i=0}^{E-1}\left(2L^2\mathbb{E}\|w_{tE+i}^k-\overline{w}_{tE+i}\|^2 + 2L^2\mathbb{E}\|\overline{w}_{tE+i}-w^*\|^2 +\sigma_k^2\right)\notag\\
&\leq& \eta_{tE}^2\cdot E\sum\limits_{i=0}^{E-1}\left(2L^2\eta_{tE}^2E^2G^2 + 2L^2\frac{v}{\gamma + tE} +\sigma_k^2\right)\notag\\
&=& \eta_{tE}^2\cdot E^2\left(2L^2\eta_{tE}^2E^2G^2 + 2L^2\frac{v}{\gamma + tE} +\sigma_k^2\right).\notag
\end{eqnarray}

With the approximation above, all the vectors we need to do quantization are bounded by $$\sqrt{\eta_{tE}^2s^2+\eta_{tE}^2\cdot E^2\left(2L^2\eta_{tE}^2E^2G^2 + 2L^2\frac{v}{\gamma + tE} +\sigma_k^2\right)}$$ in expectation, and proof follows.
\end{proof}

\section{Algorithm A-DEED-GD and its Convergence Analysis}\label{Appendix: accelerated version}
\textbf{A-DEED-GD} is the accelerated version in DEED series. The algorithm and its proof are similar to DEED-GD. The only difference between DEED-GD is the update rule. Please see algorithm \ref{our_nes} below for details.

\begin{algorithm}[h]
\begin{algorithmic}[1]
 \STATE{Initialization: Each server $i\in[N]$ holds $x_0=s_{-1}^i=v_{-1}=0$, server 0 holds $v_{-1}=0$\, $k=0$\;}
 \STATE{Parameter setting: $\eta = \frac{1}{L}$, $\tau = \frac{\sqrt{L}-\sqrt{\mu}}{\sqrt{L}+\sqrt{\mu}}$, $c=\sqrt{1-\sqrt{\frac{\mu}{L}}}$\;}
 \WHILE{the precision is not enough}
  \FOR{$i\in[N]$}
  \STATE{server $i$ computes $g_k^i=\nabla f_i(y_k)$\;}
  \STATE{server $i$ does quantization $d_k^i=Q(g_k^i - s_{k-1}^i, \frac{s {c'}^{k+1}}{2})$\;}
  \STATE{server $i$ updates $s_k^i = d_k^i + s_{k-1}^i$\;}
  \STATE{server $i$ send $d_k^i$ to server 0\;}
  \ENDFOR
  \STATE{server 0 computes $s_k = \frac{1}{N}\sum\limits_{i=1}^N d_k^i + s_{k-1}$\;}
  \STATE{server 0 does quantization $u_k = Q(s_k - v_{k-1}, \frac{s {c'}^{k+1}}{2})$\;}
  \STATE{server 0 sends $u_k$ to server $i,\forall i\in[N]$\;}
  \STATE{server 0 updates $v_k = u_k + v_{k-1}$\;}
  \FOR{$i\in[N]$}
  \STATE{server $i$ updates $v_k = u_k + v_{k-1}$\;}
  \STATE{server $i$ updates $x_{k+1} = y_k - \eta v_k$\;}
  \STATE{server $i$ updates $y_{k+1} = x_{k+1} + \tau(x_{k+1} - x_k)$\;}
  \ENDFOR
 \STATE{$k = k + 1$\;}
 \ENDWHILE
 \end{algorithmic}
 \caption{A-DEED-GD}
\label{our_nes}
\end{algorithm}

\textbf{Proof sketch}
First of all, we can use triangle inequality to prove that the error on $v_k$ is small, i.e. $\|g_k-v_k\|\leq s{c'}^{k+1}$. Then, with diminishing error we are able to prove linear convergence. Finally, we use lemma \ref{quan_vect_lemma} to show that we communicate $O(d)$ bits per communication. The first and the third step are exactly the same as DEED-GD. Hence, we only need to prove that the convergence part.

\begin{proof}
Suppose $f:\mathbb{R}^d\rightarrow\mathbb{R}$ is a $\mu$-convex and $L$-smooth function.

Choose an arbitrary point $x_0=y_0=v_0\in\mathbb{R}^d$, we can define $\phi_0(x):=\phi_0^*+\frac{\mu}{2}\|x-v_0\|^2$ where $\phi_0^*:=f(v_0)$. Then by definition we know $\phi(x)\leq f(x)$ and $\phi_0^*\geq f(x_0)$.

Next, we inductively define the following quantity. Suppose $\ell:\mathbb{Z}^{\geq 0}\rightarrow \mathbb{R}^+$ is an arbitrary function.
\begin{eqnarray}
x_{k+1} &=& y_k - \frac{1}{L} m_k\notag\\
\phi_{k+1}(x)&=&(1-\alpha)\phi_k(x) + \alpha\left[f(y_k) + \left\langle m_k, x - y_k \right\rangle + \frac{\mu}{2}\left\|x-y_k\right\|^2\right]\notag\\
v_{k+1} &=& \arg\min\limits_{v\in\mathbb{R}^n} \phi_{k+1}(v)\notag\\
y_{k+1} &=& \frac{x_{k+1} + \alpha v_{k+1}}{1+\alpha}\notag\\
\phi_{k+1}^* &=& \min \phi_{k+1}\notag
\end{eqnarray}
where $\alpha = \sqrt{\frac{\mu}{L}}$ and $m_k\in\mathbb{R}^d$ such that $\|m_k - \nabla f(y_k)\|\leq c^{k+1}\ell(k)$.

Besides, we will construct monotonically increasing functions $h,g:\mathbb{Z}^{\geq 0}\rightarrow \mathbb{R}$ inductively such that
\begin{eqnarray}
f(x_k) &\leq& \phi_k^* + h(k)\cdot c^{2k}\notag\\
\phi_k(x^*) &\leq& (1-c^{2k})f^* + c^{2k}\phi_0(x^*) + g(k)\cdot c^{2k}.\notag
\end{eqnarray}

Obviously, it is appropriate to set $g(0)=h(0)=0$.

Before we go deeper, here is an important lemma.

\begin{lemma}
With the definition above, we have
\begin{eqnarray}
y_{k+1}=x_{k+1}+\tau(x_{k+1}-x_k)\notag
\end{eqnarray}
where $\tau=\frac{\sqrt{L}-\sqrt{\mu}}{\sqrt{L}+\sqrt{\mu}}$.
\end{lemma}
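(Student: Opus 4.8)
The plan is to unwind the definitions of $y_{k+1}$, $x_{k+1}$, and $x_k$ in terms of $v_{k+1}$, $v_k$, and the momentum step $m_k$, and then verify the claimed relation by direct algebra, with the parameter $\tau=\frac{\sqrt L-\sqrt\mu}{\sqrt L+\sqrt\mu}$ emerging from combining $\alpha=\sqrt{\mu/L}$ and $\eta=1/L$. First I would compute $v_{k+1}$ explicitly: since $\phi_{k+1}$ is a quadratic with Hessian $\mu I$ (each $\phi_k$ has the form $\phi_k^*+\frac\mu2\|x-v_k\|^2$, a property preserved under the recursion because the added term also has Hessian $\mu I$), its minimizer satisfies $v_{k+1}=(1-\alpha)v_k+\alpha y_k-\frac{\alpha}{\mu}m_k$. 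This is the key identity; I would establish it once and reuse it.

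Next I would substitute this into $y_{k+1}=\frac{x_{k+1}+\alpha v_{k+1}}{1+\alpha}$ and into the analogous formula at the previous step $y_k=\frac{x_k+\alpha v_k}{1+\alpha}$. Using $x_{k+1}=y_k-\frac1L m_k$ to eliminate $m_k$ (so $m_k=L(y_k-x_{k+1})$) and eliminating $v_k$ via $v_k=\frac{(1+\alpha)y_k-x_k}{\alpha}$, everything can be written in terms of $x_{k+1}$, $x_k$, and $y_k$ only. The identity $y_{k+1}=x_{k+1}+\tau(x_{k+1}-x_k)$ then reduces to a linear relation among these three vectors whose coefficients are rational functions of $\alpha$; matching coefficients and simplifying $\frac{1-\alpha}{1+\alpha}=\tau$ (which holds since $\alpha=\sqrt{\mu/L}$) completes the proof. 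I would also note that $y_k$ itself drops out after simplification, leaving exactly the two-term momentum expression.

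I do not expect a genuine obstacle here — this is the standard ``estimate-sequence to momentum-form'' bookkeeping in Nesterov's analysis, and the only subtlety is keeping track of the inexactness $m_k$ instead of $\nabla f(y_k)$. The mild care needed is to confirm that the error term $m_k$ enters $x_{k+1}$ and $v_{k+1}$ in precisely the proportions that cancel when forming $y_{k+1}$; concretely, $x_{k+1}$ picks up $-\frac1L m_k$ and $v_{k+1}$ picks up $-\frac{\alpha}{\mu}m_k=-\frac{1}{\alpha L}m_k$, and the weighted combination $\frac{1}{1+\alpha}\bigl(x_{k+1}+\alpha v_{k+1}\bigr)$ is designed so that these are consistent with $x_{k+1}=y_k-\frac1L m_k$. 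Thus the momentum recursion is exactly the same as in the exact-gradient case, with $m_k$ playing the role of the gradient throughout; the inexactness is confined entirely to the separately-tracked functions $h$ and $g$ and does not perturb this algebraic identity at all.
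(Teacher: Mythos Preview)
Your proposal is correct and follows essentially the same approach as the paper: derive $v_{k+1}=(1-\alpha)v_k+\alpha y_k-\frac{\alpha}{\mu}m_k$ from the quadratic form of $\phi_{k+1}$, eliminate $v_k$ via $v_k=\frac{(1+\alpha)y_k-x_k}{\alpha}$, use $x_{k+1}=y_k-\frac{1}{L}m_k$, and simplify using $\tau=\frac{1-\alpha}{1+\alpha}$. The paper carries out exactly this computation, obtaining $v_{k+1}=\frac{x_{k+1}-x_k}{\alpha}+x_k$ as an intermediate step before substituting into $y_{k+1}=\frac{x_{k+1}+\alpha v_{k+1}}{1+\alpha}$; your observation that the inexactness in $m_k$ does not disturb the algebraic identity is also precisely the point.
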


\begin{proof}
Because $\phi_k(x)=\phi_k^* + \frac{\mu}{2}\|x - v_k\|^2$, taking derivative $\nabla\phi_{k+1}(x) = \mu(1-\alpha)(x-v_k) + \alpha m_k + \alpha\mu(x-y_k)$. And then we get
$v_{k+1}=(1-\alpha)v_k + \alpha y_k - \frac{\alpha}{\mu} m_k$.

Because $y_k=\frac{x_k + \alpha v_k}{1+\alpha}$, we can substitute $v_k = \frac{1+\alpha}{\alpha} y_k - \frac{1}{\alpha}x_k$ and get
\begin{eqnarray}
v_{k+1}&=&\frac{1-\alpha^2}{\alpha}y_k - \frac{1-\alpha}{\alpha}x_k + \alpha y_k - \frac{\alpha}{\mu}m_k\notag\\
&=&\frac{1}{\alpha}\left(y_k - \frac{1}{L}m_k\right) - \frac{1-\alpha}{\alpha}x_k\notag\\
&=&\frac{x_{k+1}-x_k}{\alpha} + x_k.\notag
\end{eqnarray}

Hence
\begin{eqnarray}
y_{k+1}&=&\frac{x_{k+1}+\alpha v_{k+1}}{1+\alpha}\notag\\
&=&x_{k+1}+\tau(x_{k+1}-x_k).\notag
\end{eqnarray}
\end{proof}

\ \\
Then we have
\begin{eqnarray}
f(x_k)-f^*&\leq&  \phi_k^* + h(k)\cdot c^{2k} - f^*\notag\\
&\leq& c^{2k}(\phi_0(x^*)-f^*+g(k)+h(k))\notag\\
&=&c^{2k}(\Delta+g(k)+h(k))\notag
\end{eqnarray}
where $\Delta := \phi_0(x^*)-f^* = f(v_0)-f^*+\frac{\mu}{2}\|x_0-x^*\|^2$. Hence,
\begin{eqnarray}\label{our_acc_convergence}
\|x_k-x^*\|&\leq& \sqrt{\frac{2}{\mu}(f(x_k - x^*))}\notag\\
&=&\sqrt{\frac{2}{\mu}}\cdot c^k\cdot\sqrt{\Delta + g(k) + h(k)}.
\end{eqnarray}

Furthermore, according to lemma 3 and the monotony of $g$ and $h$, we have
\begin{eqnarray}
\|y_k-x^*\|&=&\left\|\frac{2\sqrt{L}}{\sqrt{L}+\sqrt{\mu}}(x_k-x^*)-\frac{\sqrt{L}-\sqrt{\mu}}{\sqrt{L}+\sqrt{\mu}}(x_{k-1}-x^*)\right\|\notag\\
&\leq& \left(\frac{2\sqrt{L}}{\sqrt{L}+\sqrt{\mu}}+\frac{1}{c}\frac{\sqrt{L}-\sqrt{\mu}}{\sqrt{L}+\sqrt{\mu}}\right)\sqrt{\frac{2}{\mu}}\cdot c^k\cdot\sqrt{\Delta + g(k) + h(k)}\notag\\
&\leq& 3\sqrt{\frac{2}{\mu}}\cdot c^k\cdot\sqrt{\Delta + g(k) + h(k)}.\notag
\end{eqnarray}

And then we can give the first upper bound
\begin{eqnarray}
\phi_{k+1}(x^*)&=&(1-\alpha)\phi_k(x^*) + \alpha\left[f(y_k) + \left\langle m_k, x^* - y_k \right\rangle + \frac{\mu}{2}\left\|x^*-y_k\right\|^2\right]\notag\\
&\leq&(1-\alpha)\phi_k(x^*) + \alpha\left[f(y_k) + \left\langle \nabla f(y_k), x^* - y_k \right\rangle + \frac{\mu}{2}\left\|x^*-y_k\right\|^2\right]\notag\\
&&+\alpha\cdot\ell(k)c^{k+1}\|x^*-y_k\|\notag\\
&\leq&(1-\alpha)\phi_k(x^*) + \alpha f^* + \alpha\cdot\ell(k)c^{k+1}\|x^*-y_k\|\notag\\
&\leq& c^2\left( (1-c^{2k})f^* + c^{2k}\phi_0(x^*) + g(k)\cdot c^{2k}\right) + (1-c^2)f^*\notag\\
&&+\alpha\cdot\ell(k)c^{k+1}\|x^*-y_k\|\notag\\
&=& (1-c^{2k+2})f^* + c^{2k+2}\phi_0(x^*) + g(k)\cdot c^{2k+2}\notag\\
&&+3\ell(k)\sqrt{\frac{2}{L}}\cdot c^{2k+1}\cdot\sqrt{\Delta + g(k) + h(k)},\notag
\end{eqnarray}
which means we only need to make
\begin{eqnarray}\label{acc_ineq1}
g(k+1)\geq g(k)+\frac{3\ell(k)\sqrt{\frac{2}{L}}\cdot\sqrt{\Delta + g(k) + h(k)}}{c}
\end{eqnarray}

To make an upper bound of $h(k+1)$, we notice
that
\begin{eqnarray}
\phi_{k+1}^*&=&\phi_{k+1}(v_{k+1})\notag\\
&=&(1-\alpha)\left(\phi_k^* + \frac{\mu}{2}\|v_{k+1}-v_k\|^2\right) + \alpha f(y_k) + \alpha\left\langle m_k, v_{k+1}-y_k\right\rangle\notag\\
&&+\frac{\alpha\mu}{2}\|v_{k+1}-y_k\|^2.\notag
\end{eqnarray}

Substitute $v_{k+1}-y_k=(1-\alpha)(v_k-y_k)-\frac{\alpha}{\mu}m_k$, we have
\begin{eqnarray}
\phi_{k+1}^*&=&(1-\alpha)\phi_k^* + \alpha f(y_k) - \frac{1}{2L}\|m_k\|^2\notag\\
&&+\alpha(1-\alpha)\left(\frac{\mu}{2}\|y_k-v_k\|^2+\left\langle m_k, v_k-y_k\right\rangle\right)\notag\\
&\geq& (1-\alpha)\left(f(x_k)-h(k)\cdot c^{2k}\right) + \alpha f(y_k) - \frac{1}{2L}\|m_k\|^2\notag\\
&&+\alpha(1-\alpha)\left(\frac{\mu}{2}\|y_k-v_k\|^2+\left\langle m_k, v_k-y_k\right\rangle\right).\notag
\end{eqnarray}

Because $f(x_k)\geq f(y_k)+\left\langle \nabla f(y_k), x_k-y_k\right\rangle\geq f(y_k) + \left\langle m_k, x_k-y_k\right\rangle - \ell(k)\cdot c^{k+1}\|x_k-y_k\|$ and $\|x_k-y_k\|=\tau\|x_k-x_{k-1}\|\leq c^2\left(\|x_k-x^*\|+\|x^*-x_{k-1}\|\right)\leq c^{k+1}\cdot(1+c)\sqrt{\frac{2}{\mu}}\sqrt{\Delta+g(k)+h(k)}$.

Hence, we can bound $\phi_{k+1}^*$ by
\begin{eqnarray}
&&f(y_k) - \frac{1}{2L}\|m_k\|^2 - h(k)\cdot c^{2k+2} \notag\\ &&-c^{2k+4}\cdot\ell(k)\cdot (1+c)\sqrt{\frac{2}{\mu}}\sqrt{\Delta+g(k)+h(k)}.\notag
\end{eqnarray}

Recall that
\begin{eqnarray}
f(x_{k+1})-f(y_k)+\frac{1}{2L}\|m_k\|^2&\leq& \frac{1}{L}\left\langle m_k-\nabla f(y_k), m_k\right\rangle\notag\\
&\leq& \frac{1}{L}\ell(k)\cdot c^{k+1}\|m_k\|\notag\\
&\leq& c^{2k+1}\cdot\left(\frac{c\ell^2(k)}{L}+3\sqrt{\frac{2}{\mu}}\ell(k)\sqrt{\Delta+g(k)+h(k)}\right)\notag
\end{eqnarray}
we have
\begin{eqnarray}
\phi_{k+1}^*&\geq& f(x_{k+1})-c^{2k+1}\cdot\left(\frac{c\ell^2(k)}{L}+3\sqrt{\frac{2}{\mu}}\ell(k)\sqrt{\Delta+g(k)+h(k)}\right)\notag\\
&&-h(k)\cdot c^{2k+2} - c^{2k+4}\cdot\ell(k)\cdot (1+c)\sqrt{\frac{2}{\mu}}\sqrt{\Delta+g(k)+h(k)},\notag
\end{eqnarray}
which means $h(\cdot)$ only need to satisfy
\begin{eqnarray}\label{acc_ineq2}
h(k+1)\geq h(k) + \frac{5}{c}\ell(k)\cdot\sqrt{\frac{2}{\mu}}\sqrt{\Delta+g(k)+h(k)} + \frac{\ell^2(k)}{L}.
\end{eqnarray}

Finally, we only need to make inequalities (\ref{acc_ineq1}) and (\ref{acc_ineq2}) be the iteration formulas for arrays $\{g(k)\}_{k\geq0}, \{h(k)\}_{k\geq0}$ with initialization $g(0)=h(0)=0$, and we have
\begin{eqnarray}
i(k+1)\geq i(k) + \left(\frac{3\sqrt{\frac{2}{L}}+5\sqrt{\frac{2}{\mu}}}{c}\right)\ell(k)\sqrt{\Delta+i(k)}+\frac{\ell^2(k)}{L}
\end{eqnarray}
where $i(k):=g(k)+h(k), \forall k\geq0$. Now, $\forall c<c'<1$, define $\gamma = \frac{c'}{c}$ and $\ell(k)=s\gamma^{k+1}$. We will prove that $i(k)\leq C\gamma^{2k},  \forall k\geq0$ for sufficient large $C$ by mathematical induction.

Obviously, it holds for index 0. Suppose it holds for index less or equal to $k$. Then we only need to prove
\begin{eqnarray}
C\gamma^{2k+2}\geq \left(C+\frac{s^2}{L}\right)\gamma^{2k} + \left(\frac{3\sqrt{\frac{2}{L}}+5\sqrt{\frac{2}{\mu}}}{c}\right)s\gamma^{k+1}\sqrt{\Delta+C\gamma^{2k}}.\notag
\end{eqnarray}

This can be derived by
\begin{eqnarray}
C\left(\gamma^2-1\right)\geq\frac{s^2}{L}+\left(\frac{3\sqrt{\frac{2}{L}}+5\sqrt{\frac{2}{\mu}}}{c}\right)s\gamma\sqrt{\Delta+C}.\notag
\end{eqnarray}

Hence, if we define $\alpha_s=\frac{s^2}{L(\gamma^2-1)}+\Delta$, $\beta_s=\left(\frac{3\sqrt{\frac{2}{L}}+5\sqrt{\frac{2}{\mu}}}{c(\gamma^2-1)}\right)s\gamma$, making $C\geq\beta_s^2+\alpha_s+\beta\sqrt{\alpha_s}-\Delta$ is enough. The convergence result follows by inequality (\ref{our_acc_convergence}).
\end{proof}

\section{Experiments}\label{Appendix: experiments}
\subsection{Linear regression}
We illustrate the effectiveness of our proposed algorithms (DEED-GD and its accelerated version A-DEED-GD) in frequent-communication large-memory setting on linear regression problem with a 100 by 100 Gaussian generated matrix with condition number equals to 16. We focus on star networks, and there are 10 computing nodes. We perform 800 epochs on non-accelerated algorithms and 200 epochs on accelerated ones. In each update, the stepsize chosen on each computing node $i$ is $\min_i{1/L_i}$ where $L_i$ is the Lipschitz constant for the function corresponds to node $i$. As required in theorem 3.4 in QSGD's paper\cite{alistarh2017qsgd}, we choose another learning rate for QSGD. The experiments are done on a computer with 2 GHz Dual-Core Intel Core i5 processor. 

We choose the quantization level to be 10000 in QSGD \cite{alistarh2017qsgd} as smaller quantization level lead to larger loss values. For DIANA, as suggested by \cite{mishchenko2019distributed}, we quantize the weight of each layer separately, and we use either the full block size of the weighs or let block size equal to 20. The quantization level in DIANA algorithms is equal to the block size $d_i$ for some vector $i$ and the parameter $\alpha$ in DIANA is chosen to be $\min_i 1 / \sqrt{d_i}$. For DEED-GD, we choose $s = 0.01$ and $c = 0.95$ which is the convergence parameter of the baseline method (GD) using a stepsize $\min_i{1/L_i}$. Here $s, c$ are parameter controlling the maximal error defined below. For A-DEED-GD, we choose $s = 0.1$ and $c = 0.82$ which is the convergence parameter of the accelerated baseline method (A-GD) using a stepsize $\min_i{1/L_i}$.

In our algorithms, to do quantization with maximal error $\varepsilon(=s\cdot c^{k+1})$ at iteration $k$, we consider the following algorithm. For a vector $w$, we first compute a vector $\tilde{w}:=\frac{w\sqrt{d}}{\varepsilon}$. Next, we encode each coordinate of $\tilde{w}$, say $\tilde{w}_i$ into $\left\lfloor \tilde{w}_i \right\rfloor$ and $\left\lfloor \tilde{w}_i \right\rfloor+1$ unbiasedly. We call the new vector $\tilde{v}$. Finally, we compute the quantized vector $v:=\frac{\tilde{v}\varepsilon}{\sqrt{d}}$. It is obvious that using this quantization method the error is bounded by $\varepsilon$.

To encode the integer vector $\tilde{v}$ into an integer, we use the method introduced in QSGD. We refer Elias encoding, a map from positive integer to non-negative integer. First of all, We use Elias to encode the first non-zero element of $\tilde{v}$ (use 1 bit to encode sign and use other bits to encode absolute value) and its position. Then we literately use Elias to encode the next non-zero element and its distance from the previous position. It works perfectly especially when $\tilde{v}$ is sparse.

As mentioned in table \ref{table_large}, we noticed that there could be two settings for algorithm only does quantization on computing nodes like DIANA. In star network setting, the center node transmits full vector (with no quantization) to computing nodes in broadcasting term. This would cost extra $32dN$ bits. In fully connected network setting, computing nodes broadcast to each other and update information separately. This would lead to an extra $N-1$ factor on total number of bits since each computing node should broadcast to $N-1$ other nodes. One another method is to time 2 instead of time $N-1$ on total number of bits from computing nodes to center node to compare them with DEED series. It is reasonable for the following reasons. 1) In DEED series, we have two communications for each computing nodes in each iteration: sending message and receiving message. So, in this case what we really compare is 
proportional to $\text{number of iterations}\times\text{number of bits per communication}$. 2) The ``$\times 2$'' scheme counts less bits than both star network setting and fully connected network setting. If we can beat DIANA and other algorithms in this setting, it means our framework is essentially better than their framework, i.e. better than them in any settings. Notice that this is only a method of counting bits, not a method of communication.

The performance analysis has already provided in Section \ref{sec: experiments} and it shows that our algorithms (DEED-GD and its accelerated version A-DEED-GD) save the most number of bits in communication without scarifying the convergence speed. We also run the experiments on 5 different random seeds and the results is shown in Figure \ref{fig:linear regression multiple runs}. In Figure \ref{fig:linear regression multiple runs}, the shaded regions line up with the maximal and minimal loss values at each epoch among the 5 different runs, and the regions are too small to visualize due to small variance.

\begin{figure}
\centering
\includegraphics[width=80mm]{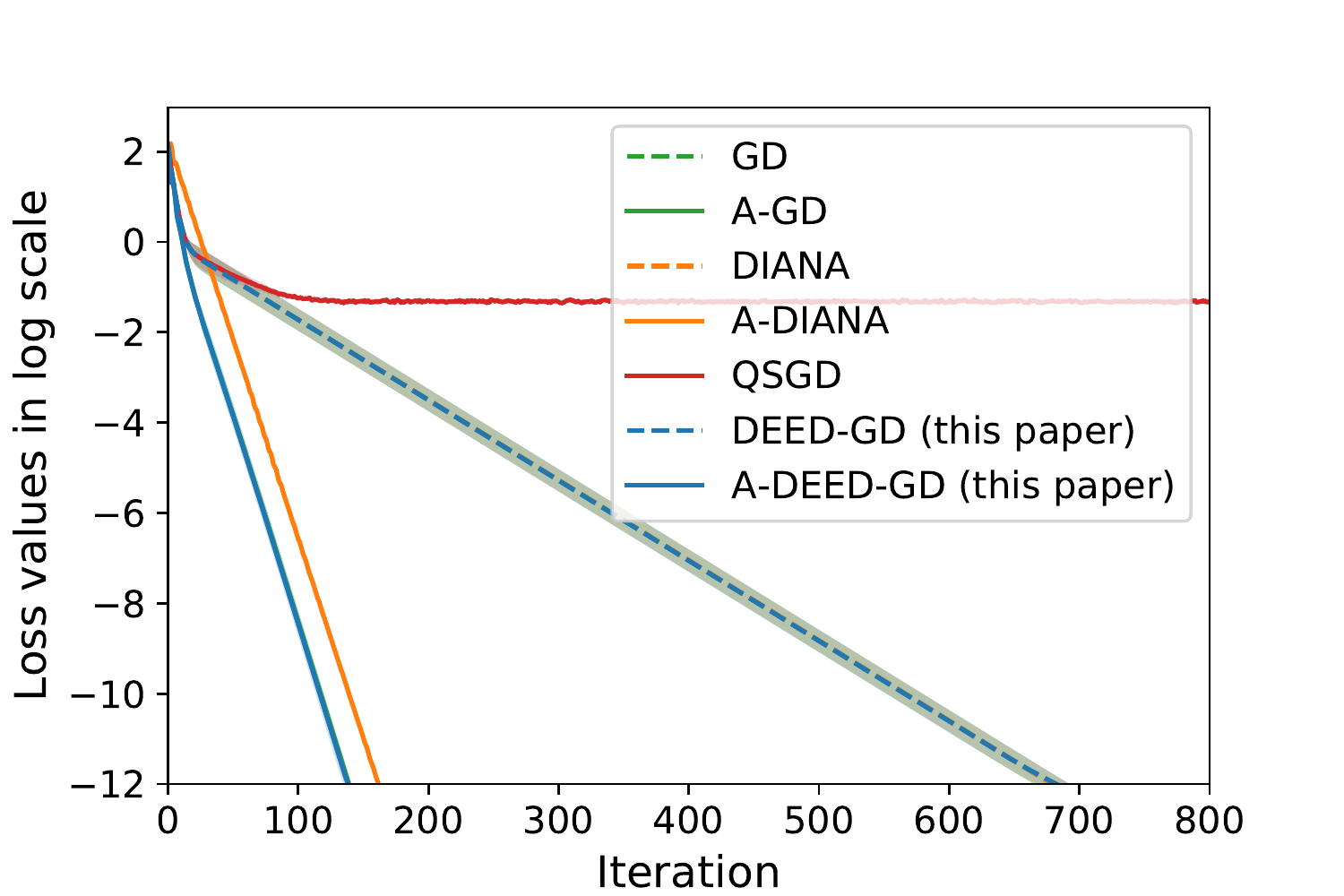}
\caption{Compare loss values of each algorithms among 5 different runs}
\label{fig:linear regression multiple runs}
\end{figure}

\subsection{Image classification on MNIST}

We evaluate the effectiveness of the proposed algorithms via
training a neural network on the MNIST
dataset \cite{lecun1998gradient} for image classification. 
MNIST consists of 60,000 $28 \times 28$ pixel training images containing a single numerical digit and an additional 10,000 test examples.
Our neural network consists of one 500-neuron fully-connected layer followed by a ten unit softmax layer for classification, and the layer used reLU activations \cite{nair2010rectified}. 
The experiments are performed on a using NVIDIA GeForce GTX1080
GPU, and the models are distributed over 6 computing servers, where each of the servers have access to 10,000 training images. 
In large-memory setting, each server uses its own 10,000 images to update the models, while in small-memory setting, each server uses randomly selected 1666 images among its own 10,000 training images as a minibatch. We train the models for 200 epochs in large-memory setting and 100 epochs for small-memory setting. The training time is approximately 9 hours for each algorithm in small-memory setting and 6 hours for large-memory setting.

We compare our algorithms with QSGD \cite{alistarh2017qsgd}, DIANA \cite{mishchenko2019distributed}, DoubleSqueeze \cite{tang2019doublesqueeze}, and Terngrad \cite{wen2017terngrad} in frequent communication settings.
For all the algorithms except DIANA, in every update, we vectorize the gradient or the gradient difference of the weight matrices of the neural network, concatenate these vectors as a large vector and do quantization, and reshape the quantized vector into the original shapes of the weight matrices for updating. 
For DIANA, as suggested by \cite{mishchenko2019distributed}, we quantize the weight of each layer separately, and we use two different block sizes in quantization (use the full block size of the weighs, or let block size equal to 128). As suggested by \cite{mishchenko2019distributed}, the quantization level in DIANA algorithms is equal to the block size $d_i$ for some vector $i$ and the parameter $\alpha$ in DIANA is chosen to be $\min_i 1 / \sqrt{d_i}$.
In large-memory setting, we train our algorithm DEED-GD with the parameters $e = 0.1, s = 25$, and in small-memory setting, we train DEED-SGD with parameters $e = 0.2, s = 25$. Here the maximal error $\varepsilon:=\frac{s}{(k+1)^e}$. We use the same encoding algorithm as described in linear regression.
In both large-memory and small-memory setting, we present the results for choosing 4-bit quantization for QSGD.
For DoubleSqueeze, we perform two kinds of quantization as discussed in \cite{tang2019doublesqueeze}: top-k compression and 1-bit compression.
For fair comparison, we use the same stepsize for all the algorithms, where the stepsize for large-memory setting is 0.25 and is 1.18 for small-memory setting. These stepsizes are chosen as the loss curves of the baseline methods (GD and SGD) are smooth and the baseline methods achieve fairly high accuracy in testing. As shown in Figure \ref{fig:MNIST loss GD} and Figure \ref{fig:MNIST loss SGD}, our algorithms as well as QSGD, DIANA and Terngrad achieve the same loss values as the baseline methods (GD and SGD) in both large-memory setting and small-memory setting.

To compare the efficiency of each algorithm, we compute the total number of bits throughout the training. 
As large integers are less frequent in encoded vectors \cite{alistarh2017qsgd}, we use Elias integer encoding to save bits in communication \cite{elias1975universal} for all algorithms in comparison. 
Notice that Terngrad, DIANA and QSGD only perform quantization on computing nodes and will typically use 32-bit precision to encode the vectors which are sent from the center node in a star network. 
To be fair in comparison, we use the scaling technique proposed in \cite{wen2017terngrad} and use $\log_2(1+2*N) * d$ bits to encode 
the vectors sent from center node where $N$ is number of computing servers and $d$ is the dimension of the vector. 
This number is significantly smaller than $32 * d$ unless $N \geq 2^{30}$. 
For DIANA and QSGD, we let computing nodes to share information to each other so they do not need to broadcast via the center node which will cause $32 *d$ bits for each update. 
Under this setting, the bits communicated in each update is $B * (N-1)$ where $B$ is the number of bits to communicated from the computing node and $N-1$ is the number of other computing nodes that need to communicate to. 
In most cases, the bits computed in this way is fewer than using 32-bit precision to encode the vectors which are broadcast from the center node. 

\textbf{Results of Efficiency.} To illustrate the efficiency of our proposed algorithms, we plot the number of bits vs the testing accuracy in Figure \ref{fig:MNIST GD} and \ref{fig:MNIST SGD} for frequent large-memory setting and frequent small-memory setting. 
In both figures, the curve that is located on the left-most corresponds to our proposed algorithms, DEED-GD and DEED-SGD. 
This means that our proposed algorithms use the fewest total number of bits to achieve the accuracy. 

To better illustrate how many bits we can save from other algorithms, we present the total number of bits in Table \ref{Table: MNIST GD} and Table \ref{Table: MNIST SGD} and compute the ratio between the number of bits required by other algorithms and the number of bits required by the proposed algorithms in the fourth column of the Table. 
For example, in Table \ref{fig:MNIST GD}, the number $10.44$ means DEED-GD takes 10.44 times fewer the number of bits than QSGD to achieve the accuracy. In theory, DIANA and our proposed algorithm DEED-GD both have linear convergence rate, but our experiments show that we can take 190.28 times fewer the number of bits than DIANA (block size equals to 128) to achieve the similar performance in training. In addition, our algorithms achieve the highest testing accuracy in the final epoch as shown in the second column of the table, and the 
accuracy is comparable or even higher than the ones achieved by the non-quantized baseline algorithms (GD achieves $91.7\%$ and SGD achieves $97.37\%$).

To ensure the performance are reproducible, we also train the models under different random seeds and choose different parameters within certain ranges. For example, we train the models by our proposed algorithm using the parameters $e$ is chosen from $[0.1, 0.3]$, $s$ is chosen from $\{16, 25, 32\}$. The comparison we discussed above is still valid under these changes. We also run experiments for different quantization levels (e.g. using 2-bit quantization or 3-bit quantization) for QSGD and DIANA, but they cannot achieve the same testing accuracy with the same number of epochs as using 4-bit quantization, so we do not discuss these results here.

\begin{figure}
\centering     
\subfigure[Large-Memory Setting]
{\label{fig:MNIST loss GD}\includegraphics[width=65mm]{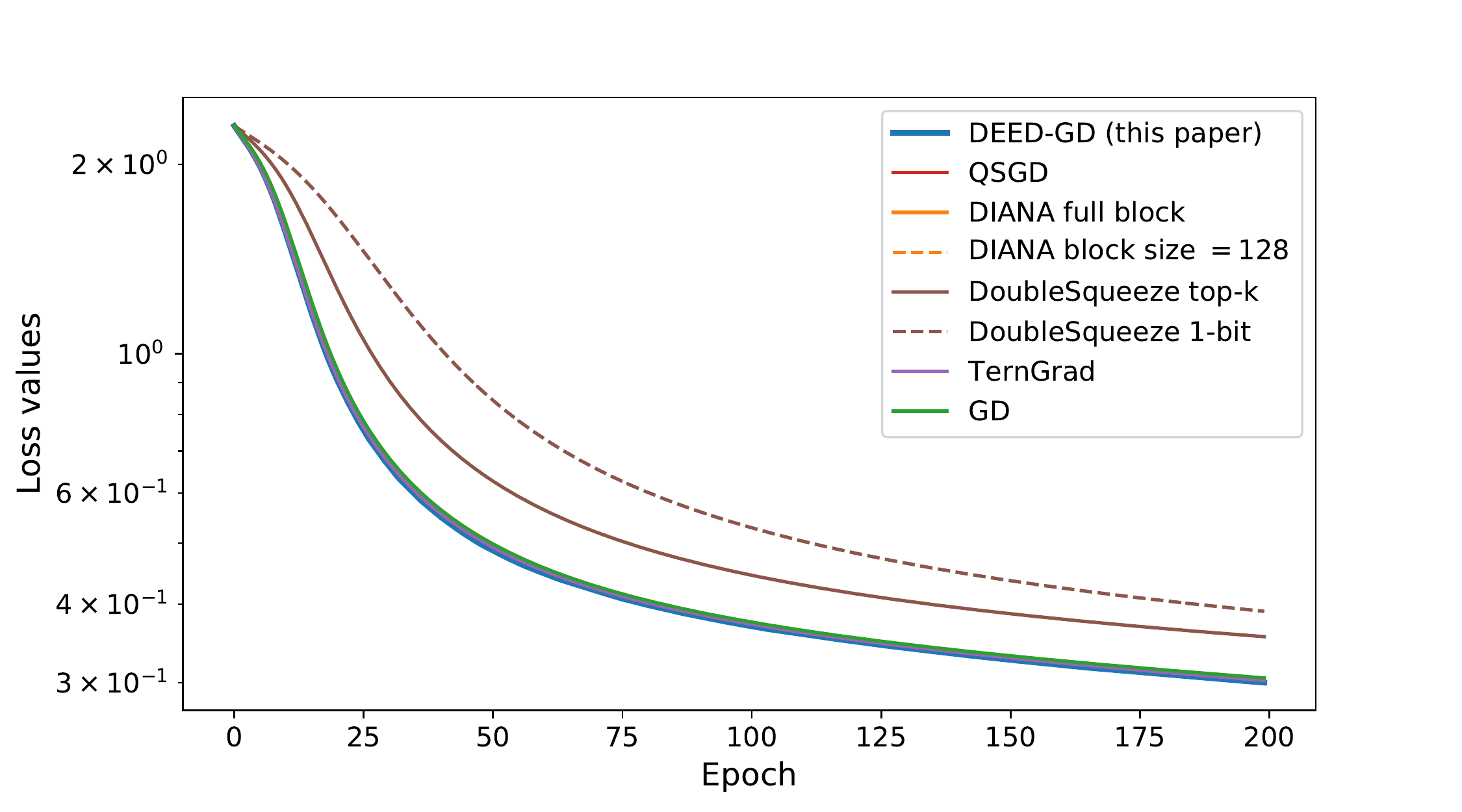}}
\subfigure[Small-Memory Setting]
{\label{fig:MNIST loss SGD}\includegraphics[width=65mm]{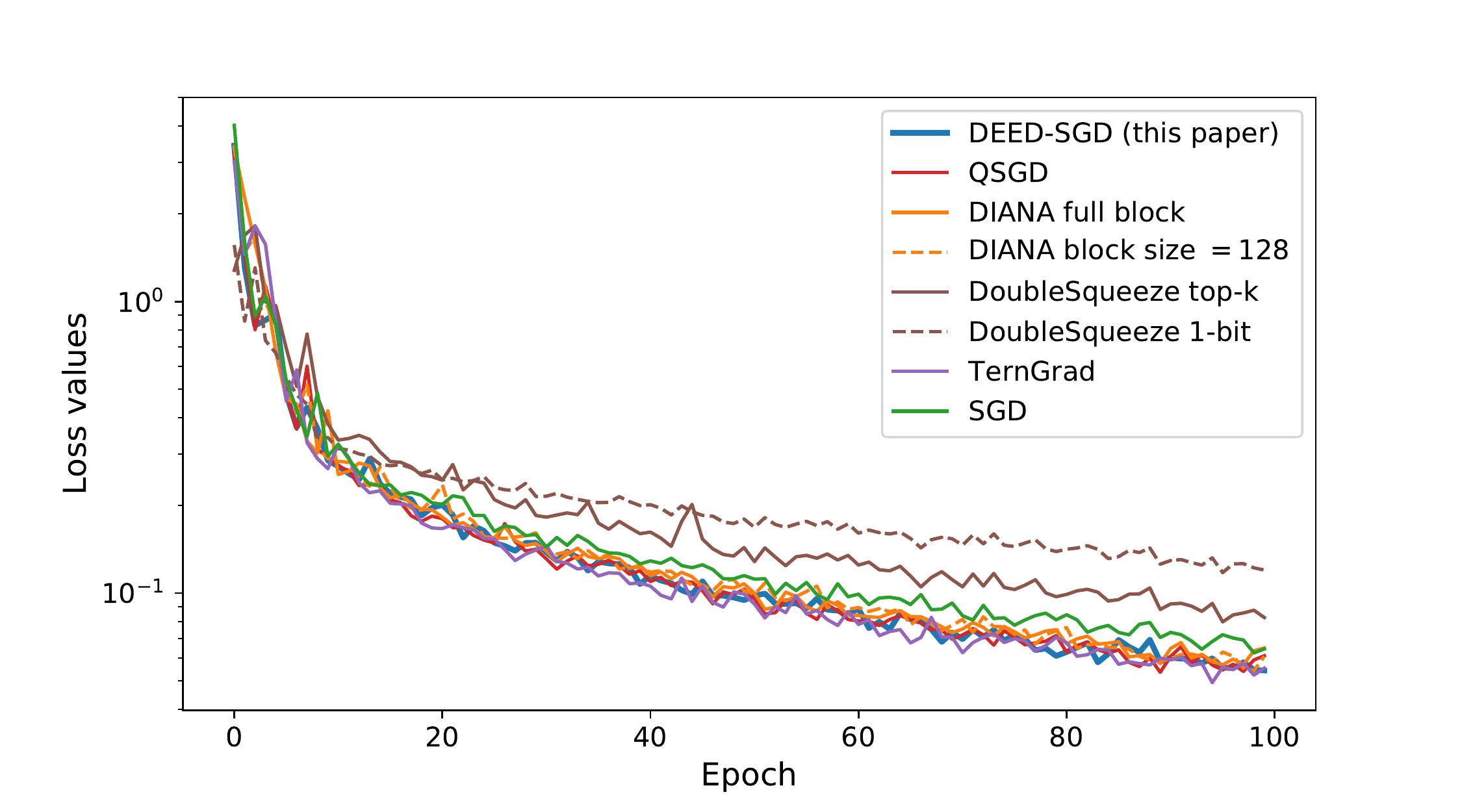}}
\caption{Compare loss values of each algorithms}
\end{figure}

\begin{figure}
\centering     
\subfigure[Large-Memory Setting]
{\label{fig:MNIST GD}\includegraphics[width=65mm]{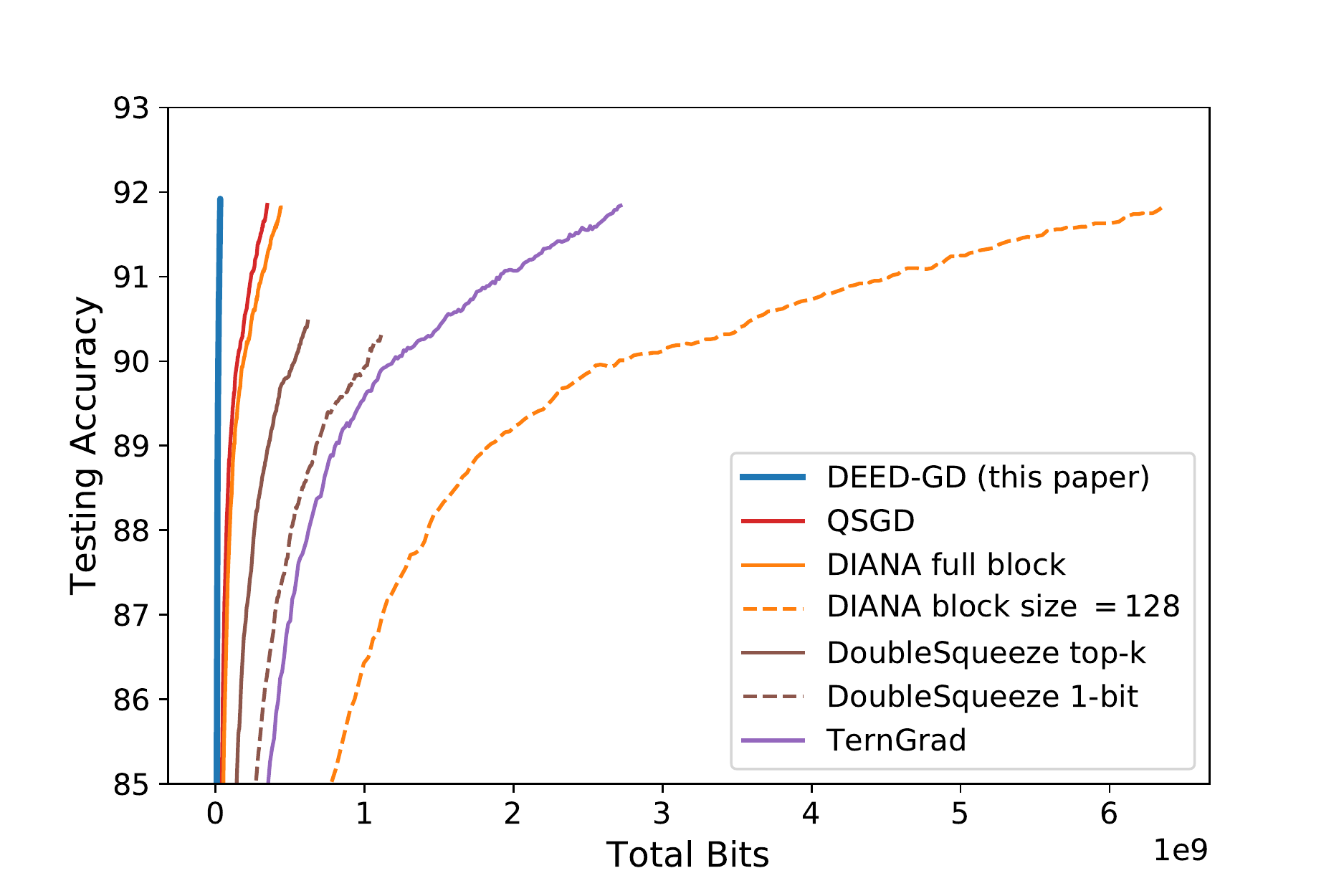}}
\subfigure[Small-Memory Setting]
{\label{fig:MNIST SGD}\includegraphics[width=65mm]{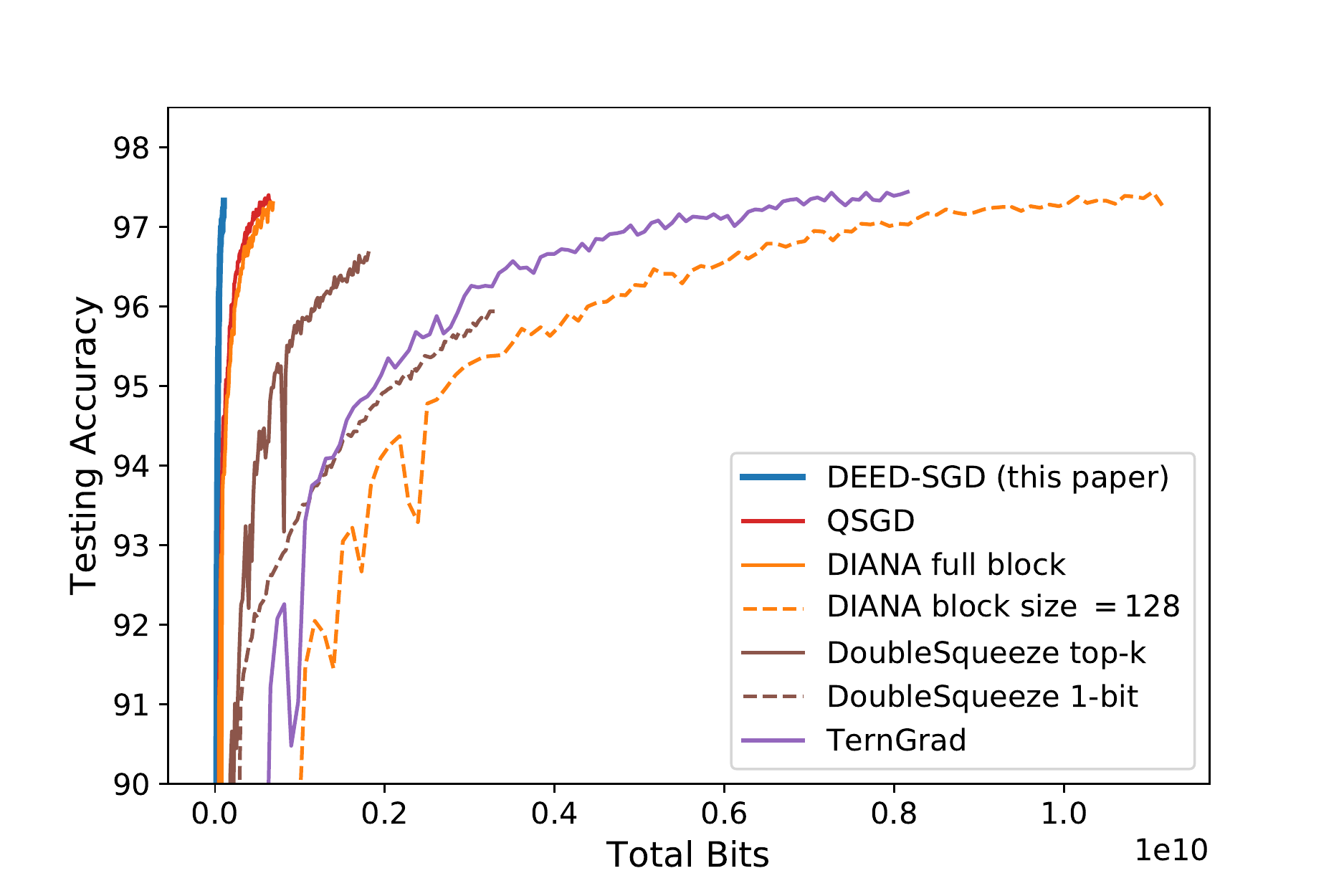}}
\caption{Total number of bits to reach certain testing accuracy}
\end{figure}

\begin{table}[]
\centering
\renewcommand\arraystretch{1.18}
\begin{tabular}{lccc}
\toprule
Algorithm & Testing accuracy & Total number of bits & Ratio  \\ 
\midrule
\textbf{DEED-GD}  & 91.86\% & \num{3.34d07} & 1.00\\
\midrule
\textbf{QSGD} \cite{alistarh2017qsgd} & 91.85\% & \num{3.49d08} & 10.44\\
\textbf{DIANA (full block)} \cite{mishchenko2019distributed} & 91.82\% & \num{4.40d08} & 13.17\\
\textbf{DIANA (block size $= 128$)} \cite{mishchenko2019distributed} & 91.82\% & \num{6.35d09} & 190.28\\
\textbf{DoubleSqueeze top-k} \cite{tang2019doublesqueeze} & 90.47\% & \num{6.21d08} & 18.59\\
\textbf{DoubleSqueeze 1-bit} \cite{tang2019doublesqueeze} & 90.31\% & \num{1.11d09} & 33.34\\
\textbf{TernGrad} \cite{wen2017terngrad} & 91.84\% & \num{2.72d09} & 81.44\\
\bottomrule
\end{tabular}
\vspace{2mm}
\caption{DEED-GD saves bits in communication for large-memory setting}
\label{Table: MNIST GD}
\end{table}

\begin{table}[]
\centering
\renewcommand\arraystretch{1.18}
\begin{tabular}{lccc}
\toprule
Algorithm & Testing accuracy & Total number of bits & Ratio  \\ 
\midrule
\textbf{DEED-SGD}  & 97.33\% & \num{1.04d+08} &  1.00 \\
\midrule
\textbf{QSGD} \cite{alistarh2017qsgd} & 97.31\% & \num{6.52d+08} &  6.25 \\
\textbf{DIANA (full block)} \cite{mishchenko2019distributed} & 97.30\% & \num{6.83d+08} &  6.55 \\
\textbf{DIANA (block size $= 128$)} \cite{mishchenko2019distributed} & 97.27\% & \num{1.12d+10} &  106.95 \\
\textbf{DoubleSqueeze top-k} \cite{tang2019doublesqueeze} & 96.67\% & \num{1.81d+09} &  17.31 \\
\textbf{DoubleSqueeze 1-bit} \cite{tang2019doublesqueeze} & 95.96\% & \num{3.34d+09} &  32.01 \\
\textbf{TernGrad} \cite{wen2017terngrad} & 97.44\% & \num{8.16d+09} &  78.20 \\
\bottomrule
\end{tabular}
\vspace{2mm}
\caption{DEED-SGD saves bits in communication for small-memory setting}
\label{Table: MNIST SGD}
\end{table}

\newpage
\bibliographystyle{ieeetr}

\bibliography{main}

\end{document}